\newcommand{\bC}{\mbox{${\mathbb C}$}}
\def\cT{\mathcal{T}}
\def\mytilde{\kern-.015in\hbox{\lower.03in\hbox{\~{}}}\kern-.01in}
\DeclareMathOperator{\Hom}{Hom}
\DeclareMathOperator{\RHom}{RHom}
\DeclareMathOperator{\pt}{pt}
\DeclareMathOperator{\Spec}{Spec}
\DeclareMathOperator{\MC}{MC}   
\DeclareMathOperator{\SMC}{SMC}   
\DeclareMathOperator{\MHC}{MHC}
\DeclareMathOperator{\MHM}{MHM}
\DeclareMathOperator{\IC}{IC}
\DeclareMathOperator{\KL}{KL}    
\newcommand{\kl}{C}         
\newcommand{\klt}{\wt C}    
\newcommand{\wt}{\widetilde}
\newcommand{\hiota}{\hat \iota}  
\newcommand{\calJ}{{\cal J}}
\newcommand{\calN}{{\mathcal{N}}}
\newcommand{\calO}{{\mathcal{O}}}
\newcommand{\calD}{{\mathcal D}}
\newtheorem{thm}{Theorem}
\newtheorem{prop}[thm]{Proposition}
\newtheorem{dfn}[thm]{Definition}
\newtheorem{cor}[thm]{Corollary}
\newtheorem{lem}[thm]{Lemma}
\newtheorem{exa}[thm]{Example}
\newtheorem{rema}[thm]{Remark}
\newcommand{\nc}{\newcommand}
\nc{\rnc}{\renewcommand}
\nc{\bb}[1]{{\mathbb #1}}
\nc{\bbA}{\bb{A}}\nc{\bbB}{\bb{B}}\nc{\bbC}{\bb{C}}\nc{\bbD}{\bb{D}}
\nc{\bbE}{\bb{E}}\nc{\bbF}{\bb{F}}\nc{\bbG}{\bb{G}}\nc{\bbH}{\bb{H}}
\nc{\bbI}{\bb{I}}\nc{\bbJ}{\bb{J}}\nc{\bbK}{\bb{K}}\nc{\bbL}{\bb{L}}
\nc{\bbM}{\bb{M}}\nc{\bbN}{\bb{N}}\nc{\bbO}{\bb{O}}\nc{\bbP}{\bb{P}}
\nc{\bbQ}{\bb{Q}}\nc{\bbR}{\bb{R}}\nc{\bbS}{\bb{S}}\nc{\bbT}{\bb{T}}
\nc{\bbU}{\bb{U}}\nc{\bbV}{\bb{V}}\nc{\bbW}{\bb{W}}\nc{\bbX}{\bb{X}}
\nc{\bbY}{\bb{Y}}\nc{\bbZ}{\bb{Z}}
\nc{\mbf}[1]{{\mathbf #1}}
\nc{\bfA}{\mbf{A}}\nc{\bfB}{\mbf{B}}\nc{\bfC}{\mbf{C}}\nc{\bfD}{\mbf{D}}
\nc{\bfE}{\mbf{E}}\nc{\bfF}{\mbf{F}}\nc{\bfG}{\mbf{G}}\nc{\bfH}{\mbf{H}}
\nc{\bfI}{\mbf{I}}\nc{\bfJ}{\mbf{J}}\nc{\bfK}{\mbf{K}}\nc{\bfL}{\mbf{L}}
\nc{\bfM}{\mbf{M}}\nc{\bfN}{\mbf{N}}\nc{\bfO}{\mbf{O}}\nc{\bfP}{\mbf{P}}
\nc{\bfQ}{\mbf{Q}}\nc{\bfR}{\mbf{R}}\nc{\bfS}{\mbf{S}}\nc{\bfT}{\mbf{T}}
\nc{\bfU}{\mbf{U}}\nc{\bfV}{\mbf{V}}\nc{\bfW}{\mbf{W}}\nc{\bfX}{\mbf{X}}
\nc{\bfY}{\mbf{Y}}\nc{\bfZ}{\mbf{Z}}
\nc{\bfa}{\mbf{a}}\nc{\bfb}{\mbf{b}}\nc{\bfc}{\mbf{c}}\nc{\bfd}{\mbf{d}}
\nc{\bfe}{\mbf{e}}\nc{\bff}{\mbf{f}}\nc{\bfg}{\mbf{g}}\nc{\bfh}{\mbf{h}}
\nc{\bfi}{\mbf{i}}\nc{\bfj}{\mbf{j}}\nc{\bfk}{\mbf{k}}\nc{\bfl}{\mbf{l}}
\nc{\bfm}{\mbf{m}}\nc{\bfn}{\mbf{n}}\nc{\bfo}{\mbf{o}}\nc{\bfp}{\mbf{p}}
\nc{\bfq}{\mbf{q}}\nc{\bfr}{\mbf{r}}\nc{\bfs}{\mbf{s}}\nc{\bft}{\mbf{t}}
\nc{\bfu}{\mbf{u}}\nc{\bfv}{\mbf{v}}\nc{\bfw}{\mbf{w}}\nc{\bfx}{\mbf{x}}
\nc{\bfy}{\mbf{y}}\nc{\bfz}{\mbf{z}}
\newcommand{\al}{\alpha}
\newcommand{\de}{\delta}
\newcommand{\ep}{\epsilon}
\newcommand{\la}{\lambda}
\newcommand{\fe}{\mathfrak{e}}
\newcommand{\fh}{\mathfrak{h}}
\newcommand{\unit}{\mathbf{1}}
\newlength{\cellsize}
\newcommand\tableau[1]{
\vcenter{
\let\\=\cr
\baselineskip=-16000pt
\lineskiplimit=16000pt
\lineskip=0pt
\halign{&\tableaucell{##}\cr#1\crcr}}}
\newcommand{\tableaucell}[1]{{%
\def \arg{#1}\def \void{}%
\ifx \void \arg
\vbox to \cellsize{\vfil \hrule width \cellsize height 0pt}%
\else
\unitlength=\cellsize
\begin{picture}(1,1)
\put(0,0){\makebox(1,1){$#1$}}
\put(0,0){\line(1,0){1}}
\put(0,1){\line(1,0){1}}
\put(0,0){\line(0,1){1}}
\put(1,0){\line(0,1){1}}
\end{picture}%
\fi}}
\let\choose\@@choose
\let\cal\mathcal
\title[Geometric properties of the Kazhdan-Lusztig Schubert basis]{Geometric properties of \\the Kazhdan-Lusztig Schubert basis}
\author[C.~Lenart]{Cristian~Lenart}
\address{State University of New York at Albany, 1400 Washington Avenue, Albany, NY 12222}
\email{clenart@albany.edu}
\author[C.~Su]{Changjian~Su}
\address{University of Toronto, 40 St. George St., Toronto, ON M5S 2E4, Canada}
\email{csu@math.toronto.edu}
\author[K.~Zainoulline]{Kirill~Zainoulline}
\address{Department of Mathematics and Statistics, University of Ottawa, 150 Louis-Pasteur, 
Ottawa,  ON,  K1N 6N5, 
Canada}
\email{kirill@uottawa.ca}
\author[C.~Zhong]{Changlong~Zhong}
\address{State University of New York at Albany, 1400 Washington Avenue, Albany, NY 12222}
\email{czhong@albany.edu}
\begin{document}

\begin{abstract}
We study classes determined by the Kazhdan-Lusztig basis of the Hecke algebra in the $K$-theory and hyperbolic cohomology theory of flag varieties. We first show that, in $K$-theory,  the two different choices of Kazhdan-Lusztig bases produce dual bases, one of which can be interpreted as characteristic classes of the intersection homology mixed Hodge modules. In equivariant hyperbolic cohomology, we show that  if the Schubert variety is smooth, then the class it determines coincides with the class of the Kazhdan-Lusztig basis; this was known as the Smoothness Conjecture. 
For Grassmannians, we prove that the classes of the Kazhdan-Lusztig basis coincide with the classes determined by Zelevinsky's small resolutions. These properties of the so-called KL-Schubert basis show that it is the closest existing analogue to the Schubert basis for hyperbolic cohomology; the latter is a very useful testbed for more general elliptic cohomologies.

\end{abstract}

\maketitle

\tableofcontents

\section{Introduction} Let $G$ be a split semi-simple linear algebraic group with a fixed Borel subgroup $B$ and  a maximal torus $T\subset B$. Let $P$ be a parabolic subgroup containing the Borel subgroup $B$. The varieties $G/P$ and $G/B$ are called flag varieties, and they are among the most concrete objects in algebraic geometry, because of the Bruhat decompositions. For instance, the equivariant cohomology (Chow group) of flag varieties  is freely spanned by the classes of Schubert varieties $X(w)$. Similarly, the equivariant $K$-theory of flag varieties  is spanned by the structure sheaves of Schubert varieties. The field of studying intersection theory of these classes is called Schubert calculus, and is related to combinatorics, representation theory, and enumerative geometry.

Due to the failure of Schubert varieties being smooth, the present paper deals with two different directions in generalizing classical Schubert calculus. The first one is concerned with the Chern classes. Although the classical Chern class theory does not work for the singular Schubert varieties, there are generalizations to this case, which are called Chern-Schwartz-MacPherson (CSM) classes \cite{M74,  S65a, S65b} in homology and motivic Chern (MC) classes in $K$-theory \cite{BSY10, AMSS19, FRW18}. These generalized Chern classes of Schubert cells are closely related to the corresponding stable bases of the cotangent bundle $T^*G/B$, defined by Maulik and Okounkov in their study of quantum cohomology/$K$-theory of Nakajima quiver varieties \cite{MO19, O17}. These classes are permuted by various Demazure-Lusztig operators \cite{AM16, Su17, SZZ17, AMSS19, MNS}, and are related to unramified principal series representations of the Langlands dual group over a non-archimedean local field \cite{SZZ17, AMSS19}.

We focus on the Kazhdan-Lusztig bases of the Hecke algebra, which are related to the intersection cohomology of Schubert varieties. Classically, there are two choices of Kazhdan-Lusztig bases. In this paper, we consider the $K$-theory classes determined by these two collections of Kazhdan-Lusztig bases. The cohomology case is considered in \cite{MS20}. As our first main result, we show that they are dual to each other in Theorem~\ref{thm:dual} and \ref{thm:KLdualP}. These dualities are closely related to the characteristic classes of mixed Hodge modules, studied by Sch\"urmann and his collaborators \cite{S11, S17, BSY10}. Moreover, we interpret one collection of these classes as the motivic Hodge Chern classes of the intersection homology mixed Hodge modules of the Schubert varieties, which immediately implies that they are invariant under the Serre-Grothendieck duality, see Proposition~\ref{prop:cwIC} and Corollary~\ref{cor:inv}.

The other direction is to look at more general cohomology theories, namely the equivariant oriented cohomology theories of Levine-Morel. They are those contravariant  functors $\bfh_T$ from the category of smooth (quasi)-projective varieties to the category of commutative rings, such that for any proper map of varieties, a push-forward of the cohomology groups is defined. One can then define Chern classes, where the first Chern class of the tensor product of line bundles determines a one-dimensional commutative formal group law. The structure of the equivariant oriented cohomology of flag varieties is studied in \cite{CZZ1, CZZ2, CZZ3, LZZ}. Roughly speaking, there is an algebra generated by push-pull operators between $\bfh_T(G/B)$ and $\bfh_T(G/P)$, called the formal affine Demazure algebra $\bfD_F$, whose dual $\bfD_F^*$ is isomorphic to $\bfh_T(G/B)$. 

To resolve the singularities of a Schubert variety $X(w)$, one often uses the Bott-Samelson resolution, which is defined by fixing a reduced decomposition of the Weyl group element $w$.  For oriented cohomology beyond singular cohomology/$K$-theory, the classes determined by such resolutions depend on the choice of the reduced decomposition. This corresponds to the fact that, for general $\bfh_T$, the push-pull operators do not satisfy the braid relations. Because of this fact, there are no canonically defined Schubert classes. 

Aiming for the definition of Schubert classes, in \cite{LZ17, LZZ}, the authors consider the so-called hyperbolic cohomology, denoted by $\fh$. A Riemann-Roch type map is defined from $K$-theory to the hyperbolic cohomology theory, which induces an action of the Hecke algebra (considered on the $K$-theory side) on the hyperbolic cohomology of $G/B$. In this way, the action of the  Kazhdan-Lusztig basis defines classes $\KL_w$ in $\fh_T(G/B)$, called KL-Schubert classes. In \cite{LZ17, LZZ}, there is a conjecture  stating that,  if the Schubert variety $X(w)$ is smooth, then its fundamental class coincides with the class $\KL_w$. It is proved in some special cases in \cite{LZ17, LZZ}.  The second main result of this paper is to prove this conjecture in full generality, see Theorem~\ref{thm:main}.

The idea of the proof is as follows: if $X(w)$ is smooth, then all the Kazhdan-Lusztig polynomials $P_{y,w}$ for any $y\leq w$ are equal to 1, so the Kazhdan-Lusztig basis for $w$ is the sum of the Demazure-Lusztig operators. As mentioned above, the MC classes of Schubert cells in $K$-theory are permuted by the Demazure-Lusztig operators. So the MC class of $X(w)$ coincides with the KL class in $K$-theory, and the restriction formula for the former is obtained in \cite{AMSS19} by generalizing a result of Kumar~\cite{K96}. Translating this formula to the hyperbolic side, we prove the Smoothness Conjecture (Theorem \ref{thm:main}). For partial flag varieties, a similar property is also proved. 

Restricting to type $A$ Grassmannians, we prove more geometric and combinatorial properties. For example, Zelevinsky constructed small resolutions of all Schubert varieties~\cite{Z83}. We prove that the classes determined by these resolutions coincide with the KL-Schubert classes, which  is the third main result of this paper, see Theorem~\ref{kl-zel}. By the uniqueness of the Kazhdan-Lusztig basis, it follows that all small resolution classes are the same. The proof of this result can be summarized as follows: Zelevinsky's small resolutions are similar to the Bott-Samelson resolutions, except that, instead of using minimal parabolic subgroups, one considers more general parabolic subgroups. So the small resolution classes can be computed by using relative push-pull operators between $G/P$ and $G/Q$. These operators were studied in~\cite{CZZ2}. On the other hand, in \cite{KL}, a factorization of the Kazhdan-Lusztig basis elements for Grassmannians is exhibited. By carefully transforming this factorization, one can write the Kazhdan-Lusztig basis elements as products of ``relative'' Kazhdan-Lusztig elements. Finally, by identifying the latter with the relative push-pull operators, one proves Theorem~\ref{kl-zel}. 

There have been important developments in Schubert calculus for general cohomology theories. More specifically, for elliptic cohomology, a stable basis in the cotangent bundle $T^*G/B$ was defined (see \cite{AO16,O20}, which generalizes stable bases for cohomology and $K$-theory),  and canonical classes were associated with Bott-Samelson resolutions of Schubert varieties~\cite{RW19,  KRW20}. The elliptic cohomology used in the latter papers can be considered as the oriented cohomology theory associated with a certain elliptic formal group law determined by the Jacobi theta functions; meanwhile, the mentioned cohomology classes are elliptic analogues of the CSM classes in ordinary cohomology and the MC classes in $K$-theory. On the other hand, the hyperbolic formal group law we consider here comes from a generic singular Weierstrass curve, see~\cite{BB}. The properties of the KL-Schubert basis proved in this paper (namely, the Smoothness Conjecture and the interpretation in terms of the Zelevinsky small resolutions) show that this basis is the closest existing analogue to the Schubert basis for hyperbolic cohomology. Furthermore, the latter is a very useful testbed for more general elliptic cohomologies.

The paper is organized as follows. In Section~2, we recall the algebraic construction of the equivariant oriented cohomology of flag varieties. In Section~3, we recall basic facts about the Hecke algebra, MC classes, and the smoothness criterion. In Section~4, we use Kazhdan-Lusztig bases to define the two collections of  KL classes in $K_T(G/B)$ and $K_T(G/P)$, and show that they are dual to each other. We also give a geometric interpretation for one of them using mixed Hodge modules. In Section~5, we recall the definition of KL-Schubert classes in hyperbolic cohomology, and prove the Smoothness Conjecture. In Section~6, we prove Theorem~\ref{kl-zel}, which connects small resolutions for Grassmannians with the corresponding KL-Schubert classes. 

\noindent\textbf{Acknowledgments:} We would like to thank Samuel Evens for helpful conversations. C.~L. gratefully acknowledges the partial support from the NSF grants DMS-1362627 and DMS-1855592.  K.~Z.  acknowledges the partial support from the NSERC Discovery grant RGPIN-2015-04469, Canada.  C.~S. thanks J. Sch\"urmann for useful discussions, and further to P. Aluffi, L. Mihalcea, H. Naruse and G. Zhao for related collaborations. 

\section{Formal affine Demazure algebra and its dual}\label{sec:DEM}
We recall the definition of the formal affine Demazure algebra and its relation with equivariant generalized (oriented) cohomology of flag varieties following \cite{HMSZ,CZZ1,CZZ2} and especially the paper \cite{CZZ3}. 

\subsubsection*{Notation}
Let $G$ be a  semisimple simply connected linear algebraic 
group over $\mathbb{C}$, and fix $B$ a Borel subgroup with a maximal torus $T\subset B$. Let 
$X^*(T)$ denote the character lattice of $T$.  Let $W=N_G(T)/T$ be the Weyl group.

Let $\Sigma$ denote the set of associated roots and let $\Sigma^+$ 
denote the subset of roots in $B$. For any root $\alpha$, let $\alpha>0$ (resp. 
$\alpha<0$) denote $\alpha\in \Sigma^+$ (resp. $-\alpha\in \Sigma^+$).

Let $\Pi=\{\alpha_1,\ldots,\alpha_n\}$ denote the set of simple roots. Let $\ell \colon W \to \mathbb{Z}$ denote the length function.  For any 
$J\subset \Pi$, denote by $W_J$ the parabolic subgroup corresponding to $J$, by $w_J$ its longest element, and by $W^J$ the set of minimal length representatives of right cosets $W/W_J$. Specifically, $w_0:=w_\Pi\in W$ is the longest element. More generally, if $J'\subset J\subset \Pi$, denote $w_{J/J'}:=w_Jw_{J'}\in W^{J'}$, that is, $w_{J/J'}$ is the maximal element  (in terms of the Bruhat order) in the set $W_J\cap W^{J'}$. Denote $\Sigma_J:=\{\al\in \Sigma|s_\al\in W_J\}$, and $\Sigma_J^\pm:=\Sigma_J\cap \Sigma^\pm$. 

\subsubsection*{Formal group algebra}
Let $F$ be a one dimensional formal group law over a  commutative unital ring $R$.  The formal group algebra $R[[X^*(T)]]_F$ is defined to be the quotient of the completion
\[
R[[x_\la|\la\in X^*(T)]]/\calJ_F\,,
\]
where $\calJ_F$ is the closure of the ideal generated by $\langle x_0, F(x_\la,x_\mu)-x_{\la+\mu}  \mid \la,\mu\in X^*(T)\rangle$. For simplicity it will be denoted by $S$. 
It can be shown that if $\{\omega_1,...,\omega_n\}$ is a basis of $X^*(T)$, then $S$ is (non-canonically) isomorphic to $R[[\omega_1,\ldots,\omega_n]]$. 

\subsubsection*{Localized twisted group ring}
Let $Q=S\left[\frac{1}{x_\al}|\al>0\right]$, and $Q_W=Q\otimes_R R[W]$. Denote the canonical left $Q$-basis of $Q_W$ by $\de_w, w\in W$, and define a product on $Q_W$ by 
\[
(p\de_w)\cdot (p'\de_{w'}):=pw(p')\de_{ww'}\,, \quad p,p'\in Q, w,w'\in W\,.
\]
In particular, we have $\de_vp=v(p)\de_v$, $p\in Q$.

\subsubsection*{Push-pull elements}
For each root $\al$, define the formal push-pull element 
\[
Y_\al:=(1+\de_{s_\al})\tfrac{1}{x_{-\al}}\in Q_W\,.
\]
For any reduced word $w=s_{i_1}\cdots s_{i_k}$, where $s_i$ is the simple reflection corresponding to the $i$th simple root in $\Pi$,
define $I_w=(i_1,\ldots,i_k)$, and $Y_{I_w}=Y_{\al_{i_1}}\cdots Y_{\al_{i_k}}$. The product $Y_{I_w}$ depends on the choice of the reduced sequence, unless the formal group law $F$ is of the form $x+y+\beta xy$ with $\beta\in R$. For simplicity, denote $\de_i:=\de_{s_i}$, $Y_i:=Y_{\al_i}$ and $x_{\pm i}:=x_{\pm \al_i}$. 

\subsubsection*{Formal affine Demazure algebra}
Let $\bfD_F$ be the subring of $Q_W$ generated by elements of $S$ and push-pull elements $Y_i$, $i=1,\ldots, n$. This is called the formal affine Demazure algebra. It is proved in \cite{CZZ1} that $\bfD_F$ is a free left $S$-module with basis $\{Y_{I_w}|w\in W\}$. 

\begin{exa}{\rm 
If $R=\bbZ$ and $F_m(x,y)=x+y-xy$  (multiplicative formal group law), then  \[S\cong \bbZ[X^*(T)]^\wedge\,,\qquad x_\al \mapsto1-e^{-\al}\,,\] where the completion is taken with respect to the kernel of the trace map.  The ring $\bfD_F$ is then isomorphic to the (completed) affine $0$-Hecke algebra.}
\end{exa}

For $J'\subset J\subseteq \Pi$, denote
\[
x_{J/J'}:=\prod_{\al\in \Sigma^-_J\backslash \Sigma^-_{J'}}x_\al\,, \quad x_J:=x_{J/\emptyset}\,. 
\]
Fixing a set of left coset representatives $W_{J/J'}$ of $W_J/W_{J'}$, we define a push-pull element 
\begin{equation}\label{eq:relpush}
Y_{J/J'}:=\left(\sum_{w\in W_{J/J'}}\de_w\right)\frac{1}{x_{J/J'}}\,, \quad Y_J:=Y_{J/\emptyset}=\left(\sum_{w\in W_J}\de_w\right)\frac{1}{x_J}\,. 
\end{equation}
Note that the definition of $Y_{J/J'}$ does not depend on the choice of $W_{J/J'}$. 
If $J=\Pi$, $x_\Pi$ and $Y_\Pi$ are correspondingly defined. 
For instance, if $J=\{i\}$, then $Y_{\{i\}}=Y_{\al_i}$. Note that in general $Y_{J/J'}\in Q_W$, but $Y_J\in \bfD_F$. We have
\begin{equation}\label{mult-y}
Y_{J/J'}Y_{J'}=Y_J\,.
\end{equation}

There is an anti-involution $\iota$ of $\bfD_F$, defined by 
\begin{equation}\label{eq:inv1}
\iota(p\de_v):=\de_{v^{-1}}p\frac{v(x_\Pi)}{x_\Pi}=v^{-1}(p)\frac{x_\Pi}{v^{-1}(x_\Pi)}\de_v\,, \quad p\in Q, \;v\in W\,.
\end{equation}
By definition, if $I^{-1}$ is the sequence obtained from $I$ by reversing the order, then 
\begin{equation} \label{invar-yj} 
\iota (Y_{I})=Y_{I^{-1}}\,.
\end{equation}

\subsubsection*{Dual of the Demazure algebra}
Let $\bfD_F^*$  denote the $S$-linear dual $\Hom_S(\bfD_F, S)$ with the dual basis $Y_{I_w}^*$, $w\in W$. One can also consider the $Q$-linear dual $Q_W^*=\Hom_Q(Q_W, Q)$, which is isomorphic to the set-theoretic $\Hom(W,Q)$. There is the dual basis $f_w, w\in W$ of $Q_W^*$ such that $f_w(\de_v)=\de_{w,v}^{Kr}$ and
$
f_w\cdot f_v=\de_{w,v}^{Kr}f_w$. 
It turns $Q_W^*$ into a commutative ring with identity $\unit=\sum_wf_w$. 
By definition, we have $\bfD_F^*\subset Q_W^*$ (where the former is a $S$-module, and the latter is considered as a $Q$-module), and the product on $Q_W^*$ restricts to the product on $\bfD_F^*$.

\subsubsection*{Two actions on the dual}
There are actions denoted `$\bullet$' and `$\odot$' of the ring $Q_W$ on its $Q$-linear dual $Q_W^*$ defined as:
\begin{equation}\label{eq:act}
(p\de_v)\bullet (qf_w):=qwv^{-1}(p)f_{wv^{-1}} \quad\text{ and }\quad (p\de_v)\odot (qf_w):=pv(q)f_{vw}\,, \quad v,w\in W,\; p,q\in Q\,.
\end{equation}
It follows from \cite[\S3]{LZZ} that the $\bullet$-action is $Q$-linear, while the $\odot$-action is not, and the two actions commute. We also have $z\bullet \pt_e=\iota(z)\odot \pt_e$. Moreover,  the two actions induce (via the embeddings $\bfD_F\subset Q_W$ and $\bfD_F^*\subset Q_W^*$) corresponding actions of $\bfD_F$ on $\bfD_F^*$. For homology and $K$-theory, the $\bullet$ and $\odot$ actions correspond to the right and left actions considered in \cite{MNS}. 


\subsubsection*{The class of a point}
For each $w\in W$ define the element 
\[
\quad \pt_w:=x_\Pi \bullet f_w= w(x_\Pi)f_w\,,
\]
and call it the class of a point.
From the definition,  we have $z\bullet \pt_e=\iota(z)\odot \pt_e$, $z\in Q_W$,  where $e\in W$ denotes the identity element. 

\subsubsection*{Generalized (oriented) cohomology} Given a formal group law $F$ over $R$,
let $\bfh$ be the corresponding free algebraic generalized (oriented) cohomology theory 
obtained from the algebraic cobordism $\Omega$ of Levine-Morel~\cite{LM07} by tensoring with $F$, i.e. \[\bfh(-):=\Omega(-)\otimes_{\Omega(\pt)} R\,.\]
Here $\Omega(pt)$ is the Lazard ring, the coefficient ring of universal formal group law, and $\Omega(\pt)\to R$ is the evaluation map defining $F$. We refer to~\cite{LM07}
for all the properties of such theories.

In particular, for the additive formal group law $F_a(x,y)=x+y$ one obtains the Chow ring and for the multiplicative group law $F_m$
one gets the usual $K$-theory.

\subsubsection*{Equivariant generalized cohomology}
Let $\bfh_T$ be the respective $T$-equivariant generalized (oriented) cohomology theory of \cite[\S2]{CZZ3}. Replacing $\bfh_T$ if necessary by its characteristic
completion (see \cite[\S3]{CZZ3}), the main result of \cite{CZZ3} says that
the formal affine Demazure algebra $\bfD_F$ and its dual $\bfD_F^*$ are related to generalized cohomology $\bfh_T(G/B)$ and $\bfh_T(G/P_J)$ as follows:
\begin{enumerate}
\item There is an isomorphism $\bfD_F^*\cong \bfh_T(G/B)$, which maps the element $Y_{I_w^{-1}}\bullet \pt_e=Y_{I_w}\odot \pt_e$ to the Bott-Samelson class determined by the sequence $I_w$.
\item Via the above isomorphism, the map $Y_\Pi\bullet\_:\bfD_F^*\to (\bfD_F^*)^W\cong S$ coincides with the map $\bfh_T(G/B)\to \bfh_T(\Spec(k))$. 
\item The group $W$ acts on $\bfD^*_F$ by restriction of the $\bullet$-action via the embedding $W\subset \bfD_F$.  For any subset $J\subset \Pi$, one has an isomorphism $(\bfD_F^*)^{W_J}\cong \bfh_T(G/P_J)$, and the map $Y_J:\bfD_F^*\to (\bfD_F^*)^{W_J}$ coincides with the push-forward map $\bfh_T(G/B)\to \bfh_T(G/P_J)$. More generally,   the map $Y_{J/J'}\bullet \_:Q_W^*\to Q_W^*$ restricts to a map $(\bfD_F^*)^{W_{J'}}\to (\bfD_F^*)^{W_J}$, which corresponds to $\bfh_T(G/P_{J'})\to \bfh_T(G/P_J) $.
\item The embedding $\bfD_F^*\to Q_W^*$ coincides with the restriction to $T$-fixed points map $\bfh_T(G/B)\to Q\otimes_S\bfh_T(W) $, and the element $\pt_w$ is mapped to the class $\fe_w$ of $T$-fixed point  of $G/B$. 
\end{enumerate}

\begin{rema}{\rm 
Observe that the localization axiom \cite[A3]{CZZ3} used to prove the above properties
can be replaced by a weaker CD-property of \cite[Def.~3.3]{NPSZ} which holds for any $\bfh_T$ defined using the Borel construction (see \cite[Example~3.6]{NPSZ}).}
\end{rema}

\section{Hecke algebra, motivic Chern class, and the smoothness criterion }\label{dem-lus-kls}
In this section, we recall the definition of the Kazhdan-Lusztig Schubert (KL-Schubert) classes, following \cite{LZZ}.

\subsubsection*{The multiplicative case}
Set $R=\bbZ[t,t^{-1}, (t+t^{-1})^{-1}]$, where $t$ is a parameter. 
Definitions of section~\ref{sec:DEM} applied to the multiplicative formal group law $F_m$ over $R$ give the respective formal group algebra and its localization:\[
S_m:=R[[X^*(T)]]_{F_m}\,,\qquad  Q_m:=S_{m}\left[\tfrac{1}{x_\al}|\al>0\right]\,;\]
the localized twisted group algebra and the formal affine Demazure algebra:
\[Q_{m,W}:=Q_m\otimes_R R[W]\,,\qquad \bfD_m:=\langle S_m,Y_1,\ldots, Y_n\rangle \subset Q_{m, W}\,.\]  

\subsubsection*{The Demazure-Lusztig elements}
Define the Demazure-Lusztig elements in $Q_{m,W}$ as
\[\tau_i:=Y^m_i(t-t^{-1}e^{\al_i})-t=\frac{t^{-1}-t}{1-e^{-\al_i}}+\frac{t-t^{-1}e^{-\al_i}}{1-e^{-\al_i}}\de_i^m\,.\]
It can be shown that $\tau_i\in \bfD_m$, $i=1,...,n$ satisfy
the standard quadratic relation $\tau_i^2=(t^{-1}-t)\tau_i+1$, and the braid relations. So they generate the Hecke algebra $H$ over $R$. 

\begin{rema}\label{rem:DL}
Let $y=-t^{-2}$.  As operators on $\bfD_m^*\cong K_T(G/B)$, then $t^{-1}\tau_i\odot\_$ agrees with $\cT_i^L$ and $t^{-1}\tau_i\bullet\_$ agrees with $\cT_i^{R, \vee}$, respectively, where the latter are notions from \cite[Section~5.3]{MNS}. 
\end{rema}


\subsubsection*{The Kazhdan-Lusztig basis} Consider the  involution of the Hecke algebra $H\to H$, $z\mapsto \overline{z}$ such that 
\begin{equation}\label{eq:inv2}
\overline{t}=t^{-1}, \quad \overline{\tau_i}=\tau_i^{-1}\,.
\end{equation}
There is a basis of $H$ over $R$ denoted by $\{\gamma_w\}_{w\in W}$ and called the Kazhdan-Lusztig basis. It is invariant under this involution and satisfies
\[
\gamma_w\in \tau_w+\sum_{v<w}t\bbZ[t]\tau_v\,.
\]
We set $t_w=t^{\ell(w)}$ and 
\[
\gamma_w=\sum_{v\le w}t_wt_v^{-1}P_{v,w}(t^{-2})\tau_v\,,
\]
where   $P_{v,w}$ are the Kazhdan-Lusztig polynomials. 
In addition to this, there is another canonical basis defined by (see~\cite{KL79})
\[\wt\gamma_w:=\sum_{v\in W}\ep_w\ep_vt_w^{-1}t_vP_{v,w}(t^2)\tau_v\in \tau_w+\sum_{v<w}t^{-1}\bbZ[t^{-1}]\tau_v\,.\]

More generally, for $J'\subset J\subseteq\Pi$, denote
\begin{equation}\label{defgjjp}
\gamma_J:=\gamma_{w_J}=\sum_{v\le w_J}t_{w_J}t_v^{-1}\tau_v\,, \quad \gamma_{J/J'}:=\sum_{v\in W_J\cap W^{J'}}t_{w_{J/J'}}t_v^{-1}\tau_v\,. 
\end{equation}
It is not difficult to see that 
\begin{equation}\label{mult-gamma}\gamma_J=\gamma_{J/J'}\gamma_{J'}\,.
\end{equation}
If $Q\subset P$ are the parabolic subgroups corresponding to $J'\subset J$, respectively, denote $\gamma_{P/Q}=\gamma_{J/J'}$.  It will be used in considering KL-Schubert classes in hyperbolic cohomology of partial flag varieties below.

\subsubsection*{Motivic Chern classes} 
We recall the definition of the motivic Chern classes, following \cite{BSY10, FRW18, AMSS19}. Let $X$ be a quasi-projective, non-singular, complex algebraic variety with an action of the torus $T$. Let ${G}_0^T(var/X)$ be the (relative) Grothendieck group  of varieties over $X$. By definition, it is the free abelian group generated by isomorphism classes $[f: Z \to X]$ where $Z$ is a quasi-projective $T$-variety and $f$ is a $T$-equivariant morphism modulo the usual additivity relations 
$$[f: Z \to X] = [f: U \to X] + [f:Z \setminus U \to X]\,,$$ for any $T$-invariant open subvariety $U \subset Z$. 
\begin{thm}\label{thm:MCdef}There exists a unique natural transformation $\MC_{-t^{-2}}: G_0^T(var/X) \to K_T(X)[t^{-2}]$ satisfying the following properties:
\begin{enumerate} \item[(1)] It is functorial with respect to $T$-equivariant proper morphisms of non-singular, quasi-projective varieties. 
\item[(2)] It satisfies the normalization condition \[ \MC_{-t^{-2}}[id_X: X \to X] = \sum (-1)^it^{-2i} [\wedge^i T^*_X] =: \lambda_{-t^{-2}}(T^*_X) \in K_T(X)[t^{-2}]\,. \]
\end{enumerate}
\end{thm}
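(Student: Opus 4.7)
The plan is to follow the strategy of Brasselet--Sch\"urmann--Yokura \cite{BSY10}, adapted to the equivariant setting as in \cite{FRW18, AMSS19}. The argument splits into uniqueness and existence, with existence being the real work.

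\textbf{Uniqueness.} First I would show that the two axioms force the value on a generating set. Given any class $[f:Z\to X]$, apply equivariant resolution of singularities to $Z$ to obtain a smooth $T$-variety $\wt Z$ with a $T$-equivariant proper birational morphism $\pi\colon \wt Z\to Z$. Let $U\subset Z$ be a $T$-invariant open subvariety over which $\pi$ is an isomorphism, and let $E=Z\setminus U$, $\wt E=\pi^{-1}(E)$. Then in $G_0^T(var/X)$ one has
\[
[f\colon Z\to X]=[f\circ\pi\colon \wt Z\to X]-[f\circ\pi\colon \wt E\to X]+[f\colon E\to X],
\]
and since $\dim E<\dim Z$, induction on dimension (together with additivity applied to $\wt Z$ to replace $\wt E$ by a simple normal crossings divisor after further blow-ups) reduces to the case where $f\colon Z\to X$ is a proper $T$-equivariant morphism with $Z$ smooth. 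In that case, properness plus the normalization axiom applied to $id_Z$ force
\[
\MC_{-t^{-2}}[f\colon Z\to X]=f_*\bigl(\lambda_{-t^{-2}}(T^*_Z)\bigr),
\]
which pins down the transformation.

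\textbf{Existence.} The main obstacle is well-definedness: the formula above must respect the additivity relations. For this I would invoke an equivariant Bittner-type presentation of $G_0^T(var/X)$, which expresses this group as the quotient of the free abelian group on classes $[f\colon Z\to X]$ with $Z$ smooth and $f$ proper, modulo the single blow-up relation
\[
[\mathrm{Bl}_Y Z\to X]-[E\to X]=[Z\to X]-[Y\to X],
\]
where $Y\subset Z$ is a smooth $T$-invariant closed subvariety and $E$ is the exceptional divisor of the $T$-equivariant blow-up $\mathrm{Bl}_YZ\to Z$. Defining $\MC_{-t^{-2}}[f\colon Z\to X]:=f_*\lambda_{-t^{-2}}(T^*_Z)$ on smooth proper generators, existence reduces to verifying this blow-up relation in $K_T(X)[t^{-2}]$. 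The hard step is the identity
\[
\rho_*\lambda_{-t^{-2}}(T^*_{\mathrm{Bl}_YZ})-\rho_*\iota_*\lambda_{-t^{-2}}(T^*_E)=\lambda_{-t^{-2}}(T^*_Z)-j_*\lambda_{-t^{-2}}(T^*_Y)
\]
in $K_T(Z)$, where $\rho\colon \mathrm{Bl}_YZ\to Z$, $\iota\colon E\hookrightarrow\mathrm{Bl}_YZ$, and $j\colon Y\hookrightarrow Z$. This is the equivariant blow-up formula for $\lambda_y(T^*)$; it follows from the explicit description of $T^*_{\mathrm{Bl}_YZ}$ via the projective bundle formula applied to $E=\mathbb{P}(N_{Y/Z})$, combined with the standard $K$-theoretic blow-up formula of Thomason.

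\textbf{Equivariance and the harder point.} The non-equivariant version is \cite{BSY10}. The extra input needed is the $T$-equivariant refinement of the Bittner presentation (i.e.\ equivariant weak factorization of $T$-equivariant birational maps between smooth $T$-varieties), together with equivariant resolution of singularities; both are available and are the substantive inputs used in \cite{FRW18, AMSS19}. I expect the verification of the blow-up relation to be the main obstacle, since it is where the specific normalization by $\lambda_{-t^{-2}}(T^*)$ is used in an essential way; functoriality, naturality, and the reduction via Bittner's presentation are comparatively formal once the relation is checked.
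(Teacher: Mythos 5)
The paper does not actually prove this theorem: immediately after the statement it writes that ``the non-equivariant case is proved in \cite{BSY10}, and the equivariant case is shown in \cite{AMSS19,FRW18}'', and simply takes the result as given. There is therefore no in-paper argument to compare against; your proposal has to be measured against those references instead.

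With that understood, your outline is an accurate reconstruction of the strategy used there: set $\MC_{-t^{-2}}[f\colon Z\to X]:=f_*\lambda_{-t^{-2}}(T^*_Z)$ on generators with $Z$ smooth and $f$ proper, use an equivariant Bittner-type presentation (via equivariant weak factorization and equivariant resolution) together with the blow-up formula for $\lambda_y(T^*)$ to verify well-definedness, and get uniqueness by reduction to such generators. One inaccuracy in the uniqueness step as you have written it: applying equivariant resolution to $Z$ makes $Z$ smooth but does nothing to make $f$ proper, since $f\circ\pi$ is proper precisely when $f$ is. The reduction to smooth proper generators requires, in addition, an equivariant compactification of $Z$ (Sumihiro's theorem) with the boundary arranged, after further equivariant blow-ups, to be a simple normal crossings divisor; only then does induction on dimension together with the additivity relation close the argument. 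This is part of the same reduction package in \cite{BSY10,FRW18}, but as stated your uniqueness argument never produces a proper $f$, so the compactification step needs to be made explicit.
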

The non-equivariant case is proved in \cite{BSY10}, and the equivariant case is shown in \cite{AMSS19,FRW18}. 

Let 
\[\mathcal{D}(-):=(-1)^{\dim X}\RHom_{\calO_{X}}(-,\omega_{X})\] 
be the Serre-Grothendieck duality functor on $K_T(X)$, where $\omega_X:=\bigwedge^{\dim X}T^*_X$ is the canonical bundle of $X$. Extend it to $K_T(X)[t^{\pm 1}]$ by setting $\mathcal{D}(t^i)=t^{-i}$. 
\begin{dfn}\label{def:smc}
Let $Z\subset X$ be a $T$-invariant subvariety.
\begin{enumerate}
\item 
Define the motivic Chern class of $Z$ to be 
\[\MC_{-t^{-2}}(Z):=\MC_{-t^{-2}}([Z\hookrightarrow X])\,.\]
\item
Further assume that  $Z$ is pure-dimensional. Define the Segre motivic Chern classes of $Z$ as follows (see \cite[Definition 6.2]{MNS}),
\[\SMC_{-t^{-2}}(Z):=t^{-2\dim Z}\frac{\mathcal{D}(\MC_{-t^{-2}}(Z))}{\lambda_{-t^{-2}}(T^*_X)}\,.\]
\end{enumerate}
\end{dfn}

\subsubsection*{Smoothness of  Schubert varieties} 
Consider the variety of complete flags $G/B$.
Let $X(w)^\circ:= BwB/B$ and $Y(w)^\circ:= B^-wB/B$ be the Schubert cells. The closures $X(w):= \overline{X(w)^\circ}$, $Y(w):= \overline{Y(w)^\circ}$ are the Schubert 
varieties. Observe that $u \le v$ with respect to the Bruhat order if and only if $X(u) \subset X(v)$. Let $\pt^m_w=w(x_\Pi) f_w^m$ denote the class of a point in $Q^*_{m,W}$ and let $\fe_w$ denote the respective $T$-fixed point in $G/B$. 

The key property of the motivic Chern classes of the Schubert cells that we need are listed below.

\begin{thm} \label{thm:smoothmc}
{\rm (1)} \cite[Theorem 7.6]{MNS}
For any $w\in W$, we have
\[\MC_{-t^{-2}}(X(w)^\circ)=t_w^{-1}\tau_w\odot \pt^m_e\,.\]

{\rm (2)} \cite[Theorem 9.1]{AMSS19}  For any $u\leq w\in W$, the Schubert variety $X(w)$ is smooth at $\fe_u$ if and only if 
\[\MC_{-t^{-2}}(X(w))|_u=\prod_{\alpha>0,\;us_\alpha\nleq w}(1-e^{u\alpha})\prod_{\alpha>0,\;us_\alpha\leq w}(1-t^{-2}e^{u\alpha})\,.\]
\end{thm}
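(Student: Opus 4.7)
The plan for part (1) is induction on $\ell(w)$. For the base case $w=e$, $X(e)^\circ = \{eB\}$ is a single $T$-fixed point; the normalization axiom of Theorem~\ref{thm:MCdef}(2), combined with functoriality under $\{eB\} \hookrightarrow G/B$, identifies $\MC_{-t^{-2}}(X(e)^\circ)$ with the skyscraper class whose sole nonzero localization at $\fe_e$ is $x_\Pi$, matching $\pt^m_e$ under the isomorphism $\bfD_m^* \cong K_T(G/B)$. For the inductive step, when $\ell(s_iw) > \ell(w)$, one verifies that $t^{-1}\tau_i \odot \MC_{-t^{-2}}(X(w)^\circ) = \MC_{-t^{-2}}(X(s_iw)^\circ)$. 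Geometrically, this reflects the $\bbP^1$-fibration $P_i\times^B X(w) \to G/B$ with image $X(s_iw)$, together with functoriality of $\MC_{-t^{-2}}$ under proper pushforward; this is the essential computation in \cite[Thm.~7.6]{MNS}. Iterating along a reduced word for $w$ yields the formula, independent of the choice of word by the braid relations for the $\tau_i$.

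For the forward direction of part (2), assume $X(w)$ is smooth at $\fe_u$. On a $T$-invariant open neighborhood $U$ of $\fe_u$, the normalization axiom gives $\MC_{-t^{-2}}(X(w)\cap U) = \lambda_{-t^{-2}}(T^*_{X(w)\cap U})$. Pushing forward to $G/B$ and applying equivariant localization at $\fe_u$ produces
\[
\MC_{-t^{-2}}(X(w))|_u = \lambda_{-t^{-2}}\bigl(T^*_{X(w),\fe_u}\bigr) \cdot \prod_{\alpha>0,\,us_\alpha\nleq w}(1-e^{u\alpha})\,,
\]
the second factor being the $K$-theoretic Euler class of the normal bundle of $X(w)$ in $G/B$ at $\fe_u$. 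A standard description of the $T$-module $T_{\fe_u}X(w)$ (via the Bruhat stratification near $\fe_u$) identifies the cotangent weights of $X(w)$ as $\{u\alpha : \alpha > 0,\,us_\alpha \leq w\}$ and the conormal weights as $\{u\alpha : \alpha > 0,\,us_\alpha \nleq w\}$. Expanding $\lambda_{-t^{-2}}$ in the cotangent weights then recovers the stated product formula.

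The backward direction of (2) is the main obstacle. The strategy builds on Kumar's classical criterion \cite{K96}, which detects smoothness of $X(w)$ at $\fe_u$ via the structure sheaf class $[\mathcal{O}_{X(w)}]|_u$, and extends it to the motivic Chern setting. The prescribed product form constrains the equivariant localization $\MC_{-t^{-2}}(X(w))|_u$ at every order in $t$: the leading $t$-behavior recovers Kumar's criterion on the normal weights, while the higher-order terms pin down the tangent weight configuration via the $\lambda_{-t^{-2}}$ expansion. The hard part is controlling all these order-by-order constraints simultaneously on the full Bruhat interval $[u,w]$, which is the content of \cite[Thm.~9.1]{AMSS19}.
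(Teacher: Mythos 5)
The paper's own proof of this theorem is essentially two citations: part (1) is quoted directly from \cite[Theorem 7.6]{MNS}, and part (2) is quoted from \cite[Theorem 9.1]{AMSS19} together with the single observation that the identity $w_0\odot\bigl(\MC_{-t^{-2}}(Y(w))\bigr)=\MC_{-t^{-2}}(X(w_0w))$ translates the result of AMSS19 --- which is stated for the \emph{opposite} Schubert varieties $Y(w)$ --- into the form for $X(w)$ asserted here. You instead attempt to reprove the cited theorems from scratch, which is a genuinely different (more self-contained) route. Your sketch of part (1) by induction on $\ell(w)$, using the relation $t^{-1}\tau_i\odot\MC_{-t^{-2}}(X(w)^\circ)=\MC_{-t^{-2}}(X(s_iw)^\circ)$ coming from the $\bbP^1$-fibration and proper pushforward, is a correct outline of the argument in \cite{MNS}, and the braid relations for $\tau_i$ do make the iterated formula word-independent. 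Your argument for the forward direction of part (2) --- localizing the pushforward of $\lambda_{-t^{-2}}(T^*_{X(w)})$ from a smooth neighborhood of $\fe_u$ and separating the normal Euler class from the tangent contribution --- is also correct (with the minor caveat that $\MC_{-t^{-2}}(X(w)\cap U)$ should be read as the pushforward class in $K_T(U)$, not the class on $X(w)\cap U$ itself).

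However, your treatment of the backward direction of (2) is not a proof: you describe the strategy in \cite{AMSS19} and the relevance of Kumar's criterion, but you do not actually establish that the stated product form of $\MC_{-t^{-2}}(X(w))|_u$ forces smoothness. If you intend to invoke \cite[Theorem 9.1]{AMSS19} for this direction, you also need the $w_0\odot$ translation between $Y(w)$ and $X(w)$ that the paper records, since AMSS19 works with the opposite cells; this observation is the only piece of actual content in the paper's proof, and it is missing from your write-up. So the proposal is a plausible outline of the underlying literature but falls short of a proof for the hard direction, while also omitting the one verification the paper itself carries out.
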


\begin{rema}{\rm 
This theorem  is used to prove the Bump, Nakasuji and Naruse's conjectures about Casselman basis in unramified principal series representations, see \cite{BN11,BN19,N14,AMSS19,Su19}.}
\end{rema}

\begin{proof}
The first part follows from the reference mentioned. The second one follows from the fact $w_0\odot(\MC_{-t^{-2}}(Y(w)))=\MC_{-t^{-2}}(X(w_0w))$. 
\end{proof}

Given $w\in W$, define the coefficients $a_{w,u}\in Q_m$ by the following formulas:
\begin{equation}\label{equ:coeffa}
\Gamma_w:=\sum_{v\leq w}t_v^{-1}\tau_{v}=\sum_{u\leq w} a_{w,u}\delta_{u}^m\in Q_{m,W}\,.
\end{equation}
Note that if the Schubert variety $X(w)$ is smooth, then $P_{v,w}=1$ for all $v\le w$, so $\Gamma_w=t^{-1}_w\gamma_w$. 
It is immediate to get the following corollary from Theorem~\ref{thm:smoothmc}.
\begin{cor}\label{cor:smooth}
For any $u\leq w\in W$, the Schubert variety $X(w)$ is smooth at the fixed point $\fe_u$ if and only if 
\[a_{w,u}=\prod_{\alpha>0,\;us_\alpha\leq w}\frac{1-t^{-2}e^{u\alpha}}{1-e^{u\alpha}}\,.\]
\end{cor}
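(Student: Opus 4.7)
The key identity to establish is $\MC_{-t^{-2}}(X(w))|_u = a_{w,u}\cdot u(x_\Pi)$. Once this is in hand, the corollary follows by directly matching it against the smoothness criterion of Theorem~\ref{thm:smoothmc}(2) and cancelling common factors.

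\textbf{Step 1: Pass from cells to the Schubert variety.} I would start from the stratification $X(w)=\bigsqcup_{v\le w}X(v)^\circ$. Additivity of the motivic Chern class in the first argument (built into the definition of $G_0^T(var/X)$ via the cut-and-paste relations) gives
\[
\MC_{-t^{-2}}(X(w))=\sum_{v\le w}\MC_{-t^{-2}}(X(v)^\circ).
\]
Combining this with Theorem~\ref{thm:smoothmc}(1) and pulling the $\odot$-action out of the sum, I obtain
\[
\MC_{-t^{-2}}(X(w))=\Bigl(\sum_{v\le w}t_v^{-1}\tau_v\Bigr)\odot \pt_e^m=\Gamma_w\odot \pt_e^m.
\]

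\textbf{Step 2: Localize at $\fe_u$.} Now I expand $\Gamma_w$ in the $\delta$-basis using~\eqref{equ:coeffa}: $\Gamma_w=\sum_{v\le w}a_{w,v}\delta_v^m$. Since $\pt_e^m=x_\Pi f_e^m$, the $\odot$-action formula $(p\delta_v)\odot(qf_w)=pv(q)f_{vw}$ gives $\delta_v^m\odot \pt_e^m=v(x_\Pi)f_v^m$. Therefore
\[
\MC_{-t^{-2}}(X(w))=\sum_{v\le w}a_{w,v}\,v(x_\Pi)\,f_v^m\in Q_{m,W}^*.
\]
Under the embedding $\bfD_m^*\hookrightarrow Q_{m,W}^*$, the coefficient of $f_u^m$ is precisely the restriction to the fixed point $\fe_u$, so $\MC_{-t^{-2}}(X(w))|_u=a_{w,u}\cdot u(x_\Pi)$.

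\textbf{Step 3: Match with the smoothness criterion.} In the multiplicative formal group law, $x_{-\alpha}=1-e^{\alpha}$, hence $x_\Pi=\prod_{\alpha>0}(1-e^{\alpha})$ and $u(x_\Pi)=\prod_{\alpha>0}(1-e^{u\alpha})$. Theorem~\ref{thm:smoothmc}(2) says that $X(w)$ is smooth at $\fe_u$ if and only if
\[
\MC_{-t^{-2}}(X(w))|_u=\prod_{\alpha>0,\;us_\alpha\nleq w}(1-e^{u\alpha})\prod_{\alpha>0,\;us_\alpha\le w}(1-t^{-2}e^{u\alpha}).
\]
Substituting the expression from Step~2 and dividing both sides by $u(x_\Pi)=\prod_{\alpha>0}(1-e^{u\alpha})$, the factors indexed by $us_\alpha\nleq w$ cancel, leaving exactly
\[
a_{w,u}=\prod_{\alpha>0,\;us_\alpha\le w}\frac{1-t^{-2}e^{u\alpha}}{1-e^{u\alpha}},
\]
which is the claim.

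There is no real obstacle here — the work was done in Theorems~\ref{thm:MCdef} and~\ref{thm:smoothmc}. The only point to handle carefully is the compatibility between the localization map $\xi\mapsto\xi|_u$ on the geometric side and the coefficient of $f_u^m$ on the algebraic side; this is the content of item~(4) in the summary of \cite{CZZ3} recalled at the end of Section~\ref{sec:DEM}, together with the normalization $\pt_w^m=w(x_\Pi)f_w^m\leftrightarrow \fe_w$.
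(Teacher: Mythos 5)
Your proof is correct and follows essentially the same route as the paper: decompose $\MC_{-t^{-2}}(X(w))$ over the cells via additivity, rewrite $\Gamma_w$ in the $\delta$-basis, apply the $\odot$-action to obtain $\MC_{-t^{-2}}(X(w))|_u=a_{w,u}\,u(x_\Pi)$, and then cancel against the product in Theorem~\ref{thm:smoothmc}(2). The only difference is that you make the final cancellation and the identification $u(x_\Pi)=\prod_{\alpha>0}(1-e^{u\alpha})$ explicit, whereas the paper leaves those routine steps to the reader.
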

\begin{proof}
By Theorem \ref{thm:smoothmc}~(1) and \eqref{equ:coeffa}, we have
\begin{align*}
\MC_{-t^{-2}}(X(w))=&\sum_{v\leq w}\MC_{-t^{-2}}(X(v)^\circ)=\sum_{v\leq w}t_v^{-1}\tau_v\odot \pt^m_e\\
=&\sum_{v\leq w} a_{w,v}\delta_v^m\odot \pt^m_e=\sum_{v\leq w} a_{w,v}\prod_{\alpha>0}(1-e^{v\alpha})f_v\,.
\end{align*}
Thus, we have
\[\MC_{-t^{-2}}(X(w))|_u=a_{w,u}\prod_{\alpha>0}(1-e^{u\alpha})\,.\]
The corollary follows from this and Theorem~\ref{thm:smoothmc}~(2). 
\end{proof}

\section{Dual bases in $K$-theory and characteristic classes of mixed Hodge modules}\label{sec:dual}
In this section, we use the two  Kazhdan-Lusztig bases of the Hecke algebra to define two collections  of classes in $K$-theory, and show that they are actually dual to each other.  We also give a geometric interpretation of one of these collections using the intersection homology mixed Hodge modules. These are also generalized to the partial flag variety case.

\subsubsection*{$K$-theory KL classes}

\begin{dfn}\label{def:KL2}
We define two collections of classes (called KL classes) in $K_T(G/B)[t^{\pm 1}]$ as follows:
\[\kl_w:=\gamma_w\odot \pt_e^m\,,\quad\;\;\klt_w:=\wt\gamma_{w^{-1}w_0}\bullet\pt_{w_0}^m \,.\] 
\end{dfn}
They form a basis of the localized $K$-theory  $Q_m\otimes_{S_m}K_T(G/B)$. 

Let $\langle-,-\rangle$ denote the usual non-degenerate tensor product pairing on $K_T(G/B)[t^{\pm 1}]$, i.e., $\langle f, g\rangle=Y_\Pi^m\bullet (f\cdot g), f, g\in K_T(G/B)[t^{\pm 1}]$. The first main result of this section is the following.
\begin{thm}\label{thm:dual}
For any $w,v\in W$, we have
\[\langle \kl_w, \klt_v\rangle=\delta_{w,v}^{Kr}\prod_{\alpha>0}(t-t^{-1}e^{-\alpha})\,.\]
\end{thm}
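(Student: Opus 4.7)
The plan is to reduce the pairing to a bilinear identity on $Q_{m,W}$ and then invoke the classical Kazhdan-Lusztig inversion formula. The pairing is computed by fixed-point localization as $\langle f,g\rangle = \sum_{u\in W} f|_u\,g|_u/u(x_\Pi)$. I would first put both classes into the form ``(element of $Q_{m,W}$) $\odot\,\pt_e^m$'': starting from $\pt_{w_0}^m = \delta_{w_0}\odot\pt_e^m$, the commutativity of $\bullet$ and $\odot$, and the relation $z\bullet\pt_e^m = \iota(z)\odot\pt_e^m$, one obtains
\[
\klt_v \;=\; \bigl(\delta_{w_0}\,\iota(\wt\gamma_{v^{-1}w_0})\bigr)\odot\pt_e^m.
\]
Since $h\odot\pt_e^m = \sum_u c_u\,\pt_u^m$ when $h=\sum_u c_u\delta_u$, and since $\langle\pt_u^m,\pt_{u'}^m\rangle = \delta_{u,u'}^{Kr}\,u(x_\Pi)$, the theorem reduces to the identity
\[
\bigl(\gamma_w,\,\delta_{w_0}\,\iota(\wt\gamma_{v^{-1}w_0})\bigr)_W \;=\; \delta_{w,v}^{Kr}\prod_{\alpha>0}(t-t^{-1}e^{-\alpha}),
\]
where $(\delta_y,\delta_u)_W := \delta_{y,u}^{Kr}\,u(x_\Pi)$ on $Q_{m,W}$.

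Next I would compute the $\tau$-expansion of $\delta_{w_0}\iota(\wt\gamma_{v^{-1}w_0})$. Using $\iota(p\delta_v) = v^{-1}(p)(x_\Pi/v^{-1}(x_\Pi))\delta_v$ together with the explicit form of $\tau_i$, each $\iota(\tau_y)$ is re-expanded in the $\delta$-basis with controlled $x_\Pi$-ratio corrections. Combined with the KL expansion $\gamma_w = \sum_{y\le w}t_w t_y^{-1}P_{y,w}(t^{-2})\tau_y$, the left-hand side of the reduced identity becomes a double sum of KL-polynomial products weighted by $x_\Pi$-factors. The classical Kazhdan-Lusztig inversion formula
\[
\sum_{x\le y\le w}(-1)^{\ell(w)-\ell(y)}P_{x,y}(q)\,P_{w_0 y, w_0 x}(q) \;=\; \delta_{x,w}^{Kr}
\]
then collapses the double sum to the single diagonal term $\delta_{w,v}^{Kr}$; the constant $\prod_{\alpha>0}(t-t^{-1}e^{-\alpha})$ emerges from evaluating the surviving $x_\Pi$-factors on the top-length contribution, using $w_0(x_\Pi) = \prod_{\alpha>0}(1-e^{-\alpha})$ and the leading coefficients $t_w$ and $t_{v^{-1}w_0}$ in the two KL expansions.

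The main obstacle is the combinatorial bookkeeping of $\iota$ on Hecke elements and the precise matching of the resulting coefficients with the inverse-KL matrix: $\iota$ does not preserve the $R$-subalgebra $H\subset\bfD_m$, so the intermediate computation lives in $\bfD_m$ and involves $x_\Pi$-denominators that must telescope correctly when paired against $\gamma_w$. An equivalent, possibly cleaner, route is to identify $\tau_y\odot\pt_e^m$ and $\tau_u\bullet\pt_{w_0}^m$ (up to normalization by powers of $t$) with the motivic Chern class $\MC_{-t^{-2}}(X(y)^\circ)$ and a correspondingly normalized Segre motivic Chern class $\SMC_{-t^{-2}}(Y(u')^\circ)$ of an opposite Schubert cell, via Theorem~\ref{thm:smoothmc}(1) and Remark~\ref{rem:DL}, and then invoke the $\MC$/$\SMC$ duality of \cite{MNS,AMSS19} in place of the direct combinatorial computation; the Kazhdan-Lusztig inversion still plays the role of consolidating the resulting expansions into the required $\delta_{w,v}^{Kr}$.
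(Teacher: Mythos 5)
Your second sketched route (the motivic-Chern/Segre-motivic-Chern approach) is essentially the paper's argument, but as written it contains a genuine gap, and your primary route is, as you yourself acknowledge, not carried out.

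The gap in route two: the object that identifies, up to normalization, with $\SMC_{-t^{-2}}(Y(v)^\circ)$ is $\tau_{w_0v}^{-1}\bullet\pt_{w_0}^m$, \emph{not} $\tau_u\bullet\pt_{w_0}^m$ (this is precisely Lemma~\ref{lem:smc}(1), which rests on \cite[Theorem~7.4]{MNS}). These differ: already $\tau_i^{-1}=\tau_i-(t^{-1}-t)$, so $\tau_i\bullet\pt_{w_0}^m$ and $\tau_i^{-1}\bullet\pt_{w_0}^m$ differ by $(t^{-1}-t)\pt_{w_0}^m$, which cannot be absorbed into a power of $t$ or a reindexing. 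Since Lemma~\ref{lem:smc}(2) pairs $\MC$ with $\SMC$, you need the $\klt_v$-expansion to land in the $\tau^{-1}\bullet\pt_{w_0}^m$-span, not the $\tau\bullet\pt_{w_0}^m$-span. The missing step is exactly what produces those inverses: use the bar-involution invariance of $\wt\gamma_w$ to rewrite
\[
\wt\gamma_w=\overline{\wt\gamma_w}=\sum_{v}\ep_w\ep_v\,t_w t_v^{-1}P_{v,w}(t^{-2})\,\tau_{v^{-1}}^{-1},
\]
and only then apply $\bullet\,\pt_{w_0}^m$. After this rewriting, Lemma~\ref{lem:smc}(1), the $\MC$/$\SMC$ orthogonality, the symmetry $P_{u,v}=P_{u^{-1},v^{-1}}$, and the KL inversion formula~\eqref{equ:inversion} close the argument exactly as in the paper.

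This observation also explains why your primary route bogs down: the anti-involution $\iota$ takes you out of $H$ into $\bfD_m$ and introduces $x_\Pi$-denominators that have no reason to telescope before you impose the geometric duality. The paper sidesteps this entirely by working with the bar involution, which preserves $H$, rather than $\iota$. Your reduction $\klt_v=(\delta_{w_0}\iota(\wt\gamma_{v^{-1}w_0}))\odot\pt_e^m$ and the localization formula for the pairing are both correct, but they lead to a computation that is not visibly tractable without reintroducing the MC/SMC machinery — so route two, once repaired as above, is the way to go.
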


We first recall that the Segre motivic Chern classes of  Schubert cells enjoy the following properties. 
\begin{lem}\label{lem:smc}
{\rm (1)} 
For any $v\in W$, we have
\[(\tau_{w_0v})^{-1}\bullet \pt_{w_0}^m=t_{w_0v}\prod_{\alpha>0}(1-t^{-2}e^{-\alpha}) \SMC_{-t^{-2}}(Y(v)^\circ)\,.\]

{\rm (2)} 
For any $u,v\in W$, we have
\[\left\langle \MC_{-t^{-2}}(X(u)^\circ),\SMC_{-t^{-2}}(Y(v)^\circ)\right\rangle=\delta_{u,v}^{Kr}\,.\]
\end{lem}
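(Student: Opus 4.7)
Both parts are essentially transcriptions of results of~\cite{MNS,AMSS19} into the notation of the present paper; via Remark~\ref{rem:DL}, the $\odot$- and $\bullet$-actions of $t^{-1}\tau_i$ are identified with the Demazure-Lusztig operators $\mathcal{T}_i^L$ and $\mathcal{T}_i^{R,\vee}$ used there.

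For part~(1), the plan is as follows. First I would establish an opposite-cell analogue of Theorem~\ref{thm:smoothmc}(1), namely
\[
\MC_{-t^{-2}}(Y(v)^\circ)\;=\;t_{w_0v}^{-1}\,\tau_{w_0v}\bullet\pt_{w_0}^m,
\]
which one can verify directly on the extreme cases $v=e$ and $v=w_0$, and derive in general from Theorem~\ref{thm:smoothmc}(1) by combining the $w_0$-symmetry that exchanges $B$- and $B^-$-orbits with the identity $z\bullet\pt_e^m=\iota(z)\odot\pt_e^m$. Next I would apply the Serre-Grothendieck duality functor $\mathcal{D}$: a standard fact (see \cite{MNS}) is that $\mathcal{D}$ intertwines the $\bullet$-action of $\tau_w$ with that of $\tau_w^{-1}$, up to the involution $t\mapsto t^{-1}$ and a scalar controlled by $\lambda_{-t^{-2}}(T^*_{G/B})=\prod_{\alpha>0}(1-t^{-2}e^{-\alpha})$. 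Substituting into Definition~\ref{def:smc}, using $\dim Y(v)^\circ=\ell(w_0v)$ so that $t^{-2\dim Y(v)^\circ}=t_{w_0v}^{-2}$, and collecting powers of $t$ yields the asserted identity.

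For part~(2), I would plug the formulas of Theorem~\ref{thm:smoothmc}(1) and part~(1) into the pairing $\langle f,g\rangle=Y_\Pi^m\bullet(f\cdot g)$. The $\odot$- and $\bullet$-actions commute, so the calculation reduces to a trace computation in the Hecke algebra acting on the product $\pt_e^m\cdot\pt_{w_0}^m$. The orthogonality $\delta_{u,v}^{Kr}$ then expresses the well-known fact that $\{\tau_w\}_{w\in W}$ and $\{\tau_w^{-1}\}_{w\in W}$ are dual families under the natural trace form on $H$; equivalently, it is the biorthogonality of $\MC_{-t^{-2}}(X(u)^\circ)$ and $\SMC_{-t^{-2}}(Y(v)^\circ)$ established in \cite[\S6]{MNS}, transcribed to the present conventions.

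The main obstacle is entirely bookkeeping rather than mathematical depth: one must reconcile (i)~the $\odot$/$\bullet$ dictionary of Remark~\ref{rem:DL}, (ii)~the indexing by $v$ versus $w_0v$ (dimension versus codimension of the cell), (iii)~the effect of $\mathcal{D}$ on the Hecke parameter $t$ and on the generators $\tau_i$, and (iv)~the dimensional twist $t^{-2\dim Y(v)^\circ}$ and Euler-class factor appearing in Definition~\ref{def:smc}. Once these conventions are pinned down, both identities are formal consequences of the cited results.
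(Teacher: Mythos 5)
There is a concrete gap in your sketch for part (1). The intermediate identity you propose,
\[
\MC_{-t^{-2}}(Y(v)^\circ)\;\stackrel{?}{=}\;t_{w_0v}^{-1}\,\tau_{w_0v}\bullet\pt_{w_0}^m,
\]
is false in the conventions of this paper. Take $G=\SL_2$ and $v=e$, so $Y(e)^\circ$ is the opposite big cell and $w_0v=s$. A direct computation with $\pt_s^m=(1-e^{-\alpha})f_s$ and the $\bullet$-action $(p\de_v)\bullet(qf_w)=qwv^{-1}(p)f_{wv^{-1}}$ gives
\[
t^{-1}\tau_s\bullet\pt_s^m=(1-t^{-2}e^{-\alpha})f_e+e^{-\alpha}(1-t^{-2})f_s,
\]
whereas $\MC_{-t^{-2}}(Y(e)^\circ)=\lambda_{-t^{-2}}(T^*_{\bbP^1})-\pt_s^m=(1-t^{-2}e^{\alpha})f_e+e^{-\alpha}(1-t^{-2})f_s$; the coefficients of $f_e$ disagree. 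The source of the error is your appeal to $z\bullet\pt_e^m=\iota(z)\odot\pt_e^m$: after transporting $\pt_{w_0}^m=\delta_{w_0}\odot\pt_e^m$ through the commutation of $\bullet$ and $\odot$, your claimed identity would force $\iota(\tau_w)\odot\pt_e^m=\tau_w\odot\pt_e^m$, i.e. $\iota(\tau_w)=\tau_w$. But $\iota$ of~\eqref{eq:inv1} does not fix the Demazure--Lusztig generators (one checks $\iota(\tau_i)\neq\tau_i$ in $Q_{m,W}$; the anti-involution that does fix them is the modified $\hiota$ introduced later in the paper). What the $w_0$-symmetry actually gives is $\MC_{-t^{-2}}(Y(v)^\circ)=t_{w_0v}^{-1}(\delta_{w_0}\tau_{w_0v})\odot\pt_e^m$, which does not translate into a clean $\bullet$-formula for the plain motivic Chern class; the clean $\bullet$-formula in Lemma~\ref{lem:smc}(1) holds for the \emph{Segre} motivic Chern class, and that is precisely the content of \cite[Theorem~7.4]{MNS}.

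Because of this, your proposed derivation of part (1) via the opposite-cell MC class plus Serre duality would need to be reworked with the correct intermediate formula, and there is no shortcut avoiding the convention bookkeeping. The paper sidesteps all of this by citing \cite[Theorem~7.4]{MNS} for part (1) and \cite[Theorem~7.1]{MNS} for part (2) directly, via the operator dictionary of Remark~\ref{rem:DL}. Your part (2) outline (pairing the formulas and invoking biorthogonality of $\MC$ and $\SMC$) is in the right spirit and would go through once part (1) is established correctly, but as written it depends on the flawed step in part (1).
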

\begin{proof}
The first part follows from Remark \ref{rem:DL} and \cite[Theorem 7.4]{MNS}, while the second one follows from Theorem~7.1 of \textit{loc. cit.}.
\end{proof}

\begin{rema}{\rm 
By definition, $(t^{-1}\tau_i)|_{t=\infty}=Y_i^m-1$. Thus, from Theorem \ref{thm:smoothmc}(1), we get
\begin{align*}
\MC_{-t^{-2}}(X(w)^\circ)|_{t=\infty}=&t_w^{-a}\tau_w\odot \pt_e^m|_{t=\infty}
=[\mathcal{O}_{X(w)}(-\partial X(w))]=:\mathcal{I}_w\,,
\end{align*}
where $\partial X(w)=\cup_{v<w}X(v)$ is the boundary of the Schubert variety $X(w)$, and $\mathcal{I}_w$ denotes its ideal sheaf. On the other hand, $(t^{-1}\tau_i^{-1})|_{t=\infty}=Y_i^m.$ Thus, the first part of the lemma gives
\[\SMC_{-t^{-2}}(Y(v)^\circ)|_{t=\infty}=(t_{w_0v}\tau_{w_0v})^{-1}\bullet \pt_{w_0}^m|_{t=\infty}=[\mathcal{O}_{Y(v)}]\,.\]
Therefore, setting $t=\infty$ in the second part of the lemma, we get the classical fact
\[\left\langle \mathcal{I}_w, [\mathcal{O}_{Y(v)}]\right\rangle=\delta_{u,v}^{Kr}\,.\]}
\end{rema}

\subsubsection*{Proof of Theorem {\rm \ref{thm:dual}}}
First of all, we have the following inversion formula for the Kazhdan-Lusztig polynomials (see \cite[Theorem~3.1]{KL79}):
\begin{equation*}
\sum_{z}\epsilon_y\epsilon_z P_{x,z}P_{w_0y,w_0z}=\delta_{x,y}^{Kr}\,.
\end{equation*}
Therefore,
\begin{equation}\label{equ:inversion}
\sum_{z}\epsilon_x\epsilon_z P_{w_0z,w_0x}P_{z,y}=\delta_{x,y}^{Kr}\,.
\end{equation}

By definition and Theorem~\ref{thm:smoothmc}~(1), 
\begin{equation}\label{equ:cw}
\kl_w=\sum_{u\leq w}t_wt_u^{-1}P_{u,w}(t^{-2})\tau_u\odot \pt_e^m=\sum_{u\leq w}t_wP_{u,w}(t^{-2})\MC_{-t^{-2}}(X(u)^\circ)\,.
\end{equation} 
On the other hand, since $\wt\gamma_w$ is invariant under the involution, we get
\[\wt\gamma_w=\sum_{v\in W}\ep_w\ep_vt_wt_v^{-1}P_{v,w}(t^{-2})\tau_{v^{-1}}^{-1}\,.\]
Thus, 
\begin{align}\label{equ:cwt}
\klt_w=&\wt\gamma_{w^{-1}w_0}\bullet\pt_{w_0}^m\nonumber\\
=&\sum_{v\geq w}\ep_w\ep_vt_{w^{-1}w_0}t_{v^{-1}w_0}^{-1}P_{v^{-1}w_0,w^{-1}w_0}(t^{-2})\tau_{w_0v}^{-1}\bullet\pt_{w_0}^m\nonumber\\
=&\prod_{\alpha>0}(1-t^{-2}e^{-\alpha})\sum_{v\geq w}\ep_w\ep_vt_{w^{-1}w_0}P_{v^{-1}w_0,w^{-1}w_0}(t^{-2})\SMC_{-t^{-2}}(Y(v)^\circ)\,,
\end{align}
where the last step follows from Lemma \ref{lem:smc}~(1).

Therefore, we have
\begin{align*}
\langle \kl_w, \klt_y\rangle =&\prod_{\alpha>0}(1-t^{-2}e^{-\alpha})t_wt_{y^{-1}w_0}\sum_{u}P_{u,w}\sum_{v}\epsilon_v\epsilon_y  P_{v^{-1}w_0,y^{-1}w_0}\delta_{u,v}^{Kr} \\
=&\prod_{\alpha>0}(1-t^{-2}e^{-\alpha})t_wt_{y^{-1}w_0}\sum_{u}P_{u,w}\epsilon_u\epsilon_y  P_{w_0u,w_0y} \\
=&\prod_{\alpha>0}(t-t^{-1}e^{-\alpha})\delta_{w,y}^{Kr}\,,
\end{align*}
where the first equality follows from Lemma \ref{lem:smc}~(2), the second follows from $P_{u,v}=P_{u^{-1}, v^{-1}}$, and the third one follows from \eqref{equ:inversion}.

An immediate corollary of the proof is the following.
\begin{cor}
If the Schubert variety $X(w)$ is smooth, 
\[\kl_w=\sum_{u\leq w}t_w\MC_{-t^{-2}}(X(u)^\circ)=t_w\MC_{-t^{-2}}(X(w))\in K_T(G/B)[t^{\pm 1}]\,.\]
\end{cor}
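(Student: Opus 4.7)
The plan is to reduce the corollary directly to the formula for $\kl_w$ in terms of motivic Chern classes that was already established in the displayed equation \eqref{equ:cw}, together with two standard inputs: the Kazhdan--Lusztig characterization of smoothness and the additivity of $\MC_{-t^{-2}}$ on the relative Grothendieck group of varieties.

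First I would recall from \eqref{equ:cw} that
\[\kl_w=\sum_{u\leq w}t_wP_{u,w}(t^{-2})\MC_{-t^{-2}}(X(u)^\circ)\,.\]
The smoothness of $X(w)$ is equivalent, by a classical theorem of Kazhdan--Lusztig (which is the point-wise criterion underlying Corollary~\ref{cor:smooth} applied at every $\fe_u$ with $u \le w$), to the vanishing of all lower KL polynomials, i.e.\ $P_{u,w}(q)=1$ for every $u\leq w$. Substituting this into the displayed expression immediately yields the first equality
\[\kl_w=\sum_{u\leq w}t_w\,\MC_{-t^{-2}}(X(u)^\circ)\,.\]

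For the second equality I would invoke the defining property of $\MC_{-t^{-2}}$ as a natural transformation out of $G_0^T(\mathrm{var}/G/B)$ (Theorem~\ref{thm:MCdef}). The Bruhat decomposition gives the $T$-equivariant stratification
\[X(w)=\bigsqcup_{u\le w}X(u)^\circ\,,\]
and applying the additivity relation in $G_0^T(\mathrm{var}/G/B)$ iteratively over the strata produces the identity
\[[X(w)\hookrightarrow G/B]=\sum_{u\le w}[X(u)^\circ\hookrightarrow G/B]\,\]
in $G_0^T(\mathrm{var}/G/B)$. Since $\MC_{-t^{-2}}$ is a group homomorphism, this identity is preserved, giving
\[\MC_{-t^{-2}}(X(w))=\sum_{u\le w}\MC_{-t^{-2}}(X(u)^\circ)\,,\]
which is exactly the needed equality after multiplication by $t_w$.

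There is essentially no obstacle here, since both ingredients are available from the preceding material: the KL smoothness criterion controls the KL-polynomial coefficients in \eqref{equ:cw}, and the additivity in the relative Grothendieck group of varieties is built into the definition of motivic Chern classes. The only mild point to note is that the smoothness hypothesis is global (smoothness of $X(w)$) and must be used to conclude $P_{u,w}=1$ simultaneously for all $u\le w$; this is immediate from the classical Kazhdan--Lusztig result (or, equivalently, from applying Corollary~\ref{cor:smooth} at each fixed point $\fe_u\in X(w)$).
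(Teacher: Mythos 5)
Your proposal is correct and follows the same route as the paper: read off the first equality from \eqref{equ:cw} together with $P_{u,w}=1$ for all $u\le w$ when $X(w)$ is smooth, and obtain the second equality from additivity of $\MC_{-t^{-2}}$ over the Bruhat stratification (the paper had already used this additivity implicitly in the proof of Corollary~\ref{cor:smooth}). One small caution: the paper attributes "smooth $\Rightarrow P_{u,w}=1$ for all $u\le w$" to \cite[6.1.19]{BL00}, and this is an implication rather than an equivalence in general type (the equivalence is with rational smoothness); also, it does not really follow from Corollary~\ref{cor:smooth}, which characterizes smoothness in terms of the coefficients $a_{w,u}$ rather than the KL polynomials themselves. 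Since you only use the implication, the proof stands.
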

\begin{proof}
It follows directly from \eqref{equ:cw} and the fact $P_{u,w}=1$ for all $u\le w$. 
\end{proof}

\subsubsection*{Characteristic classes of mixed Hodge modules}
For any parabolic subgroup $P_J$, let $K^0(\MHM(G/P_J,B))$ denote its Grothendieck group of $B$-equivariant mixed Hodge modules. Recall there is a motivic Hodge Chern transformation (see \cite[Definition 5.3 and Remark 5.5]{S11})
\[\MHC_{-t^{-2}}:K^0(\MHM(G/P_J,B))\rightarrow K_B(G/P_J)[t^{\pm 1}]\simeq K_T(G/P_J)[t^{\pm 1}]\,,\]
such that for any $[f:Z\rightarrow G/P_J]\in G_0^B(var/({G/P_J}))$,
\begin{equation}\label{equ:MHC}
\MC_{-t^{-2}}([f:Z\rightarrow G/P_J])=\MHC_{-t^{-2}}([f_!\bbQ_Z^H])\,,
\end{equation}
where $[\bbQ_Z^H]:=[k^*\bbQ_{\pt}^H]\in K^0(\MHM(Z,B))$ and $k:Z\rightarrow \pt$ is the structure morphism. The construction also works for $B^-$-equivariant mixed Hodge modules, where $B^-$ is the opposite Borel subgroup. The natural transformation $\MC_{-t^{-2}}$ commutes with the Serre-Grothendieck dual as follows, see Corollary 5.19 of \textit{loc. cit.},
\begin{equation}\label{equ:D}
\MHC_{-t^{-2}}\circ \mathcal{D}=\mathcal{D}\circ \MHC_{-t^{-2}}\,,
\end{equation}
where the $\mathcal{D}$ on the left hand side is the dual of the mixed Hodge modules, while the other one is the Serre-Grothendieck dual. Here both are denoted by $\calD$, if no confusion is possible.

For any $u\in W$, let $i_u:X(u)^\circ\hookrightarrow G/B$ and $j_u:Y(u)^\circ\hookrightarrow G/B$ be the inclusions. Then by \eqref{equ:MHC}
\[\MC_{-t^{-2}}(X(u)^\circ)=\MHC_{-t^{-2}}([i_{u!}\bbQ_{X(u)^\circ}^H])\,.\]
Since $\calD\circ j_{v!}=j_{v*}\circ\calD$, and
\[\calD(\bbQ_{Y(v)^\circ}^H)=\bbQ_{Y(v)^\circ}^H[2\dim Y(v)^\circ](\dim Y(v)^\circ)\,,\]
where $[2\dim Y(v)^\circ]$ means shift by $2\dim Y(v)^\circ$ and $(\dim Y(v)^\circ)$ denotes the twist by the Tate Hodge module $\bbQ^H(1)^{\otimes \dim Y(v)^\circ}$,
Equation \eqref{equ:D} gives
\[\SMC_{-t^{-2}}(Y(v)^\circ)=\frac{\MHC_{-t^{-2}}([j_{v*}\bbQ_{Y(v)^\circ}^H])}{\lambda_{-t^{-2}}(T^*_{G/B})}\,.\]
Using these, Lemma \ref{lem:smc}(2) can also be proved using mixed Hodge modules by J. Sch\"urmann. For the analogue in equivariant homology, see \cite[Theorem 1.2]{S17}.

For any Schubert variety $X(w)$, let $[\IC_{X(w)}^H]\in K^0(\MHM(G/B,B))$ denote the intersection homology Hodge module on $X(w)$. Then it is well known that (see \cite{KL80,T87,KT02}), 
\[[\IC_{X(w)}^H]=\sum_{u\leq w}\ep_wP_{u,w}(t^{-2})[i_{u!}\bbQ_{X(u)^\circ}^H]\,.\]
Thus, 
\[\MHC_{-t^{-2}}([\IC_{X(w)}^H])=\sum_{u\leq w}\ep_wP_{u,w}(t^{-2})\MC_{-t^{-2}}(X(u)^\circ)\,.\]
Comparing with \eqref{equ:cw}, we get the following geometric interpretation of the KL classes $C_w$ in Definition \ref{def:KL2}.
\begin{prop}\label{prop:cwIC}
For any $w\in W$,
\[\kl_w=t_w\ep_w\MHC_{-t^{-2}}([\IC_{X(w)}^H])\in K_T(G/B)[t^{\pm 1}]\,.\]
\end{prop}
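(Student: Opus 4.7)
The plan is to chain together three identities that have already been set up in the text immediately preceding the proposition. First, I would recall formula~\eqref{equ:cw}, which expresses
\[
\kl_w \;=\; \sum_{u \le w} t_w P_{u,w}(t^{-2}) \,\MC_{-t^{-2}}(X(u)^\circ)
\]
in terms of motivic Chern classes of Schubert cells; this follows directly from Theorem~\ref{thm:smoothmc}(1) and the definition of $\gamma_w$ in the Kazhdan--Lusztig basis.

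Next, I would apply the naturality relation~\eqref{equ:MHC} of Sch\"urmann's motivic Hodge Chern transformation to the inclusions $i_u \colon X(u)^\circ \hookrightarrow G/B$, giving
\[
\MC_{-t^{-2}}(X(u)^\circ) \;=\; \MHC_{-t^{-2}}\bigl([i_{u!}\bbQ_{X(u)^\circ}^H]\bigr)\,.
\]
Substituting into the previous display and using $R$-linearity of $\MHC_{-t^{-2}}$ converts the sum into
\[
\kl_w \;=\; t_w \,\MHC_{-t^{-2}}\!\left( \sum_{u \le w} P_{u,w}(t^{-2}) \,[i_{u!}\bbQ_{X(u)^\circ}^H] \right).
\]

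Finally, I would invoke the classical expansion of the intersection cohomology mixed Hodge module in terms of standard objects, namely the Kazhdan--Lusztig/Tanisaki identity
\[
[\IC_{X(w)}^H] \;=\; \sum_{u \le w} \ep_w P_{u,w}(t^{-2}) \,[i_{u!}\bbQ_{X(u)^\circ}^H],
\]
recalled from \cite{KL80, T87, KT02}. Multiplying both sides by $\ep_w$ (which is a sign, so $\ep_w^2 = 1$) and applying $\MHC_{-t^{-2}}$ yields exactly the sum in the previous step, so that
\[
\kl_w \;=\; t_w \ep_w \,\MHC_{-t^{-2}}\bigl([\IC_{X(w)}^H]\bigr),
\]
as desired.

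There is no real obstacle here: every identity needed has either been recorded in the excerpt or is standard. The only point requiring a little care is ensuring that the signs $\ep_w$, the powers $t_w$, and the variable convention $-t^{-2}$ versus $q = t^{-2}$ in the Kazhdan--Lusztig polynomial match across the three sources (formula~\eqref{equ:cw}, the naturality of $\MHC$, and the Kazhdan--Lusztig/Tanisaki decomposition of $\IC$); aligning these conventions is the step I would check most carefully before finishing.
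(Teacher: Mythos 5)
Your proposal is correct and follows essentially the same route as the paper, which derives the proposition by combining formula~\eqref{equ:cw}, the naturality relation~\eqref{equ:MHC} applied to the inclusions $i_u$, and the Kazhdan--Lusztig/Tanisaki expansion $[\IC_{X(w)}^H]=\sum_{u\le w}\ep_w P_{u,w}(t^{-2})[i_{u!}\bbQ_{X(u)^\circ}^H]$. The only stylistic difference is the order of the steps; the substance and the citations to \cite{KL80,T87,KT02} are identical.
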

An immediate Corollary is the following. 
\begin{cor}\label{cor:inv}
The canonical basis $\kl_w$ is invariant under the Serre-Grothendieck duality, i.e.,
\[\calD(\kl_w)=\kl_w\in K_T(G/B)[t^{\pm 1}]\,.\]
\end{cor}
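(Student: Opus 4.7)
The plan is to derive the corollary as a direct consequence of Proposition \ref{prop:cwIC} together with the Verdier self-duality of the intersection homology mixed Hodge module and the compatibility \eqref{equ:D}. Concretely, starting from $\kl_w = t_w\epsilon_w \MHC_{-t^{-2}}([\IC_{X(w)}^H])$, I apply $\calD$ to both sides. Since $\calD$ is additive and $\calD(t_w)=t_w^{-1}$ by the extension rule for $\calD$ on $K_T(G/B)[t^{\pm 1}]$, and since $\epsilon_w=\pm 1$, this gives
\[
\calD(\kl_w) \;=\; t_w^{-1}\epsilon_w\,\calD\bigl(\MHC_{-t^{-2}}([\IC_{X(w)}^H])\bigr) \;=\; t_w^{-1}\epsilon_w\, \MHC_{-t^{-2}}\bigl(\calD[\IC_{X(w)}^H]\bigr),
\]
where the second equality uses \eqref{equ:D}.

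The crux is then the Verdier self-duality of $\IC_{X(w)}^H$. In the normalization implicit in the paper (where $\IC_{X(w)}^H$ restricts to the constant Hodge module $\bbQ^H_{X(w)^\circ}$ on the open stratum), one has the identity
\[
\calD[\IC_{X(w)}^H] \;=\; [\IC_{X(w)}^H[2\ell(w)](\ell(w))] \quad \text{in } K^0(\MHM(G/B,B)).
\]
Under $\MHC_{-t^{-2}}$, the cohomological shift $[2\ell(w)]$ contributes a sign $(-1)^{2\ell(w)}=1$, while the Tate twist $(\ell(w))$ contributes the factor $t^{2\ell(w)} = t_w^2$. This last convention is the one already fixed in the section: combining Definition \ref{def:smc} of $\SMC_{-t^{-2}}$ with the formula $\SMC_{-t^{-2}}(Y(v)^\circ) = \MHC_{-t^{-2}}([j_{v*}\bbQ_{Y(v)^\circ}^H])/\lambda_{-t^{-2}}(T^*_{G/B})$ forces the Tate twist $(1)$ to correspond to multiplication by $t^2$ on the $K$-theory side.

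Substituting back yields
\[
\calD(\kl_w) \;=\; t_w^{-1}\cdot t_w^{2}\cdot \epsilon_w\, \MHC_{-t^{-2}}([\IC_{X(w)}^H]) \;=\; t_w\epsilon_w\, \MHC_{-t^{-2}}([\IC_{X(w)}^H]) \;=\; \kl_w,
\]
which is the claim. The only potential obstacle is the bookkeeping of normalizations: checking that the convention for $\IC_{X(w)}^H$ used in the paper (as read off from the expansion $[\IC_{X(w)}^H]=\sum_{u\le w}\epsilon_w P_{u,w}(t^{-2})[i_{u!}\bbQ^H_{X(u)^\circ}]$) matches the self-duality formula above, and that the Tate twist convention inherited from $\MHC_{-t^{-2}}$ produces exactly $t_w^2$ rather than $t_w^{-2}$. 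Both are already reconciled in the derivation of the mixed Hodge module formula for $\SMC_{-t^{-2}}(Y(v)^\circ)$ earlier in the section, so the verification is purely formal.
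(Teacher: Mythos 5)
Your argument is the same as the paper's: apply $\calD$ to the formula of Proposition~\ref{prop:cwIC}, use the compatibility~\eqref{equ:D} together with the Verdier self-duality of $\IC_{X(w)}^H$, and check that the Tate twist $(\ell(w))$ produces $t_w^2$, cancelling the $t_w^{-1}$ from $\calD(t_w)$. Your version is slightly more careful in recording the even cohomological shift $[2\ell(w)]$ (which drops out in $K^0$) and in explaining why the twist convention is forced by the earlier $\SMC$ derivation, but the approach and all the key steps match the paper's proof.
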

\begin{proof}
Since 
\[\calD(\IC_{X(w)}^H)=\IC_{X(w)}^H(\dim X(w))\,,\]
Equation \eqref{equ:D} and Proposition \ref{prop:cwIC} give
\begin{align*}
\calD(\kl_w)=&\calD(t_w\ep_w\MHC_{-t^{-2}}([\IC_{X(w)}^H])=t_w^{-1}\ep_w\MHC_{-t^{-2}}(\calD([\IC_{X(w)}^H]))=\kl_w\,.
\end{align*}
\end{proof}

\subsubsection*{Parabolic case}
In this subsection, we generalize the above results to the parabolic case. Let $J\subset \Pi$ be a subset of simple roots, with corresponding parabolic subgroup $P_J$. Schubert cells and varieties and opposite Schubert cells and varieties of $G/P_J$ are indicated by subscripts $J$. 
 Recall there exist parabolic Kazhdan-Lusztig polynomials (see \cite{D79,KT02}), denoted by $P_{v,w}^J\in \bbZ[t^{-2}]$, where $v,w\in W^J$. Here our $P_{v,w}^J$ is the $u=-1$ parabolic KL polynomials in \cite{D79}, which is also denoted by $P_{v,w}^{J,q}$ in \cite[Remark 2.1]{KT02}. We have the following property, which generalizes \cite[Proposition 3.4]{D79}.
\begin{lem} \cite[Proposition 5.19]{LZZ}\label{lemma:pkl}
For any $w,v\in W^J$ and $u\in W_J$,
\[P_{vu,ww_J}=P_{v,w}^J\,.\]
\end{lem}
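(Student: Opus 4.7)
The plan is to argue geometrically. The projection $\pi \colon G/B \to G/P_J$ is a smooth, Zariski-locally trivial fibration with fiber $P_J/B$. For $w \in W^J$, the element $ww_J$ is of maximal length in its right coset $wW_J$, so $\ell(ww_J) = \ell(w) + \ell(w_J)$. A direct computation gives $\pi(X(ww_J)) = X_J(w)$ and $\dim \pi^{-1}(X_J(w)) = \ell(w) + \dim(P_J/B) = \ell(ww_J)$; since $X(ww_J) \subseteq \pi^{-1}(X_J(w))$ and both are irreducible of the same dimension,
\begin{equation*}
X(ww_J) = \pi^{-1}(X_J(w))\,.
\end{equation*}
Restricted to $X(ww_J)$, $\pi$ is thus a smooth fibration over $X_J(w)$ with fiber $P_J/B$.

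Next I would invoke the geometric interpretation of Kazhdan-Lusztig polynomials (Kazhdan-Lusztig in the ordinary case and Deodhar-Kashiwara-Tanisaki in the parabolic case): $P_{y, ww_J}$ is the Poincar\'e polynomial of the stalk of $\IC_{X(ww_J)}$ at the torus-fixed point indexed by $y$, while $P_{v, w}^J$ (in the $u=-1$ normalization of \cite{D79,KT02}) is the corresponding Poincar\'e polynomial for $\IC_{X_J(w)}$. Because $\IC$-sheaves pull back under smooth morphisms up to a fixed degree shift, the stalk of $\IC_{X(ww_J)}$ at a fixed point $vu$ with $v \in W^J$, $u \in W_J$ depends only on its image $vP_J \in G/P_J$ and coincides, after the shift, with the stalk of $\IC_{X_J(w)}$ at $vP_J$. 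Consequently $P_{vu, ww_J}$ is independent of $u$ and equals $P_{v, w}^J$.

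The main obstacle is matching conventions: Deodhar introduced two families of parabolic KL polynomials (the $u = q$ and $u = -1$ versions) which differ by signs and by which local system is used to build $\IC$. I would follow \cite[Remark 2.1]{KT02} to identify the $u=-1$ version with the Poincar\'e polynomial of ordinary intersection cohomology of $X_J(w)$, so that the smooth-pullback identity delivers the polynomial equality directly. A purely Hecke-algebraic alternative is available: combine Deodhar's Proposition 3.4 of \cite{D79} (which handles the case $u=e$, yielding $P_{v,ww_J}=P_{v,w}^J$ for $v \in W^J$) with the identity $P_{vu, ww_J} = P_{v, ww_J}$ for $u \in W_J$; this latter identity follows by downward induction on $\ell(u)$ using the KL recursion and the relation $(ww_J)s_i < ww_J$ for every simple reflection $s_i \in W_J$, a consequence of $ww_J$ being maximal in its coset.
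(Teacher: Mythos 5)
The paper does not supply its own proof of this lemma; it is cited directly from \cite[Proposition 5.19]{LZZ}, so there is no internal argument to compare against. Both of your proofs are correct.

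Your geometric argument is sound: $X(ww_J)=\pi_J^{-1}(X_J(w))$ follows exactly as you say from the dimension count $\ell(w)+\ell(w_J)=\ell(ww_J)$ together with irreducibility, the restriction of $\pi_J$ to this Schubert variety is smooth with fiber $P_J/B$, and $\IC$-sheaves pull back under smooth maps with a shift by the relative dimension $\ell(w_J)$, which is precisely the discrepancy $\ell(ww_J)-\ell(w)$ absorbed by the normalizations. The one genuine subtlety you correctly flag is pinning down which of Deodhar's two parabolic families admits the intersection-cohomology stalk interpretation; the paper's own normalization (the $u=-1$ family, $P^{J,q}$ in the notation of \cite{KT02}) is the right one, as used in \cite[Corollary~5.1]{KT02} and quoted later in Section~4 of this paper.

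The purely Hecke-algebraic alternative you sketch at the end is cleaner and more self-contained, and is probably closer to how \cite{LZZ} argues. The key step is the standard Kazhdan--Lusztig lemma that $P_{x,y}=P_{xs,y}$ whenever $s$ is a simple reflection with $ys<y$ and $xs>x$. Since every $s_i$ with $i\in J$ is a right descent of $ww_J$, and since $v\in W^J$ forces $\ell(vu')=\ell(v)+\ell(u')$ for $u'\in W_J$, running along a reduced word of $u$ shows $P_{vu,ww_J}$ is independent of $u\in W_J$; Deodhar's base case then closes the argument. (A small quibble: this is naturally an \emph{upward} induction on $\ell(u)$ from $u=e$, not downward, though the two directions are interchangeable here.) This route avoids the convention-matching entirely and uses nothing beyond \cite{D79} plus a textbook KL recursion fact.
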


Let $Q_{u,w}:=P_{w_0w,w_0u}$ denote the usual inverse KL polynomials, which satisfy 
\[\sum_w \ep_u\ep_w Q_{u,w}P_{w,v}=\delta_{u,v}^{Kr}\,.\] 
For any $u,w\in W^J$, let $Q_{u,w}^J\in \bbZ[t^{-2}]$ denote the inverse parabolic KL polynomial (see \cite{KT02}\footnote{Our $Q_{u,w}^J$ is denoted by $Q_{u,w}^{J,q}$ in \cite{KT02}.}). Then 
\begin{equation}\label{equ:inversekl}
\sum_{w\in W^J} \ep_u\ep_w Q^J_{u,w}P^J_{w,v}=\delta_{u,v}^{Kr}\,.
\end{equation}
Moreover, it is related to the usual $Q_{u,w}$ as follows, see \cite[Proposition 2.6]{KT02} or \cite{S97}:
\[Q_{u,w}^J=\sum_{v\in W_J}\epsilon_v\epsilon_{w_J}Q_{uw_J,wv}\,.\]

Following Equations \eqref{equ:cw} and \eqref{equ:cwt}, we define the parabolic canonical bases in $K_T(G/P_J)[t^{\pm 1}]$ as follows.
\begin{dfn}
For any $w\in W^J$, let 
\[\kl^J_w:=\sum_{u\in W^J, u\leq w}t_wP^J_{u,w}(t^{-2})\MC_{-t^{-2}}(X(u)_J^\circ)\,,\]
and 
\[\klt^J_w:=\prod_{\alpha\in \Sigma^+\setminus \Sigma^+_J}(1-t^{-2}e^{-\alpha})\sum_{v\in W^J,v\geq w}\ep_w\ep_vt_{w_Jw^{-1}w_0}Q_{w,v}^J(t^{-2})\SMC_{-t^{-2}}(Y(v)_J^\circ)\,.
\]
\end{dfn}
Then if  $J=\emptyset$, then $C_w^\emptyset=C_w$, and $\wt C_w^\emptyset=\wt C_w$, as  defined before. 

Let $\langle -,-\rangle_J$ denote the non-degenerate tensor product pairing on $K_T(G/P_J)$. 
The parabolic analog of Lemma \ref{lem:smc}(2) also holds (see \cite[Theorem 7.2]{MNS}):
for any $u,v\in W^J$, 
\[\left\langle \MC_{-t^{-2}}(X(u)_J^\circ),\SMC_{-t^{-2}}(Y(v)_J^\circ)\right\rangle_J=\delta_{u,v}^{Kr}\,.\]
Combining this with \eqref{equ:inversekl}, we immediately get the following generalization of Theorem \ref{thm:dual}.
\begin{thm}\label{thm:KLdualP}
For any $u,w\in W^J$,
\[\langle \kl^J_w,\klt^J_u\rangle_J=\delta_{u,w}^{Kr}\prod_{\alpha\in \Sigma^+\setminus \Sigma^+_J}(t-t^{-1}e^{-\alpha})\]
\end{thm}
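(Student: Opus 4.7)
The plan is to mirror the proof of Theorem~\ref{thm:dual}, with parabolic substitutes for each ingredient: the parabolic MC/SMC orthogonality quoted from \cite[Theorem~7.2]{MNS} replaces Lemma~\ref{lem:smc}(2), and the parabolic inversion formula~\eqref{equ:inversekl} replaces~\eqref{equ:inversion}.

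First I would substitute the definitions of $\kl^J_w$ and $\klt^J_u$ into the pairing. By bilinearity, with $v$ ranging over $v\le w$ and $z$ over $z\ge u$ in $W^J$, this yields
\[\langle \kl^J_w, \klt^J_u\rangle_J = \prod_{\alpha \in \Sigma^+ \setminus \Sigma^+_J}(1 - t^{-2} e^{-\alpha})\cdot t_w\, t_{w_J u^{-1} w_0} \sum_{v,z \in W^J} \ep_u \ep_z\, P^J_{v,w}(t^{-2})\, Q^J_{u,z}(t^{-2})\, \bigl\langle \MC_{-t^{-2}}(X(v)_J^\circ),\, \SMC_{-t^{-2}}(Y(z)_J^\circ)\bigr\rangle_J.\]
The parabolic MC/SMC orthogonality then collapses the double sum to its $v=z$ diagonal, leaving the single sum $\sum_{v \in W^J} \ep_u \ep_v\, Q^J_{u,v}\, P^J_{v,w}$, which by \eqref{equ:inversekl} equals $\delta_{u,w}^{Kr}$.

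To finish, it remains to match the scalar prefactor in the case $u=w$. For this I would invoke the length identity $\ell(w) + \ell(w_J w^{-1} w_0) = \ell(w_0) - \ell(w_J) = |\Sigma^+ \setminus \Sigma^+_J|$, valid for $w \in W^J$; granting this, $t_w\, t_{w_J w^{-1} w_0} = t^{|\Sigma^+ \setminus \Sigma^+_J|}$, and pulling a factor of $t^{-1}$ out of each $(1 - t^{-2} e^{-\alpha})$ absorbs this power to produce exactly $\prod_{\alpha \in \Sigma^+ \setminus \Sigma^+_J}(t - t^{-1} e^{-\alpha})$.

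The main obstacle is purely bookkeeping: aligning the signs $\ep_u \ep_z$ with the precise form of \eqref{equ:inversekl}, and verifying the length identity for $w_J w^{-1} w_0$ (which follows from the standard fact that $w_0 w_J$ is the longest element of $W^J$, so that $w^{-1}w_0 = w_J\cdot(w_J w^{-1} w_0)$ is length-additive). No new geometric input is required; the statement is essentially a parabolic repackaging of Theorem~\ref{thm:dual}, once the parabolic orthogonality of Segre motivic Chern classes cited from \cite[Theorem~7.2]{MNS} is in hand.
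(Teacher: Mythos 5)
Your proposal is correct and follows exactly the paper's approach: the paper's proof is a one-liner that cites the parabolic MC/SMC orthogonality from \cite[Theorem~7.2]{MNS} and the parabolic inversion formula~\eqref{equ:inversekl}, and you simply fill in the bookkeeping (expanding definitions, collapsing the double sum, and checking the length identity $\ell(w)+\ell(w_Jw^{-1}w_0)=\ell(w_0)-\ell(w_J)$ for $w\in W^J$, all of which is correct).
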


We now investigate the relation between KL classes of $G/B$ and $G/P_J$. 
For any $w\in W^J$, let us still use $i_u$ denote the inclusion $X(u)^\circ_J\hookrightarrow G/P_J$. Then the following identity holds in $K^0(\MHM(G/P_J, B))$ (see \cite[Corollary 5.1]{KT02}),
\[[\IC_{X(w)_J}^H]=\sum_{u\in W^J, u\leq w}\ep_w P_{u,w}^J[i_{u!}\bbQ_{X(u)^\circ_J}^H]\,.\]
Thus, we get the following parabolic analog of Proposition \ref{prop:cwIC} and Corollary \ref{cor:inv}.
\begin{prop}
For any $w\in W^J$,
\[\kl^J_w=t_w\ep_w \MHC_{-t^{-2}}([\IC_{X(w)_J}^H])\,.\]
Moreover, let $\calD_J$ denote the Serre-Grothendieck duality functor on $G/P_J$. Then
\[\calD_J(\kl^J_w)=\kl^J_w.\] 
\end{prop}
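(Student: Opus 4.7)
The plan is to mimic the arguments that established Proposition~\ref{prop:cwIC} and Corollary~\ref{cor:inv} in the full flag case, substituting everywhere the parabolic ingredients the authors have just assembled. The key external inputs are: the decomposition
\[
[\IC_{X(w)_J}^H]=\sum_{u\in W^J,\;u\leq w}\ep_w P_{u,w}^J\,[i_{u!}\bbQ_{X(u)^\circ_J}^H]
\]
in $K^0(\MHM(G/P_J,B))$ taken from \cite[Corollary~5.1]{KT02}, the compatibility \eqref{equ:MHC} of $\MC_{-t^{-2}}$ with $\MHC_{-t^{-2}}$, and the duality identity \eqref{equ:D}; all three work on $G/P_J$ as well as on $G/B$.

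For the first identity, first apply $\MHC_{-t^{-2}}$ to the above decomposition. By \eqref{equ:MHC} applied to the locally closed inclusion $i_u\colon X(u)^\circ_J\hookrightarrow G/P_J$, each term $\MHC_{-t^{-2}}([i_{u!}\bbQ_{X(u)^\circ_J}^H])$ equals $\MC_{-t^{-2}}(X(u)^\circ_J)$. Pulling out the sign $\ep_w$ and multiplying through by $t_w\ep_w$, I recover exactly the formula
\[
t_w\ep_w\MHC_{-t^{-2}}([\IC_{X(w)_J}^H])=\sum_{u\in W^J,\;u\leq w}t_w P_{u,w}^J(t^{-2})\,\MC_{-t^{-2}}(X(u)^\circ_J),
\]
whose right-hand side is the definition of $\kl^J_w$. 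This proves the first claim.

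For the duality statement, I would use that $\IC_{X(w)_J}^H$ is self-dual up to a Tate twist of dimension $\dim X(w)_J$, namely $\calD([\IC_{X(w)_J}^H])=[\IC_{X(w)_J}^H](\dim X(w)_J)$, exactly as in the proof of Corollary~\ref{cor:inv}. Combining this with the parabolic version of \eqref{equ:D}, which intertwines the Serre-Grothendieck duality $\calD_J$ on $K_T(G/P_J)$ with the duality of mixed Hodge modules on $G/P_J$, one obtains
\[
\calD_J(\kl^J_w)=\calD_J\!\left(t_w\ep_w\MHC_{-t^{-2}}([\IC_{X(w)_J}^H])\right)=t_w^{-1}\ep_w\MHC_{-t^{-2}}(\calD([\IC_{X(w)_J}^H]))=\kl^J_w,
\]
where in the last equality the Tate twist $(\dim X(w)_J)$ is absorbed into the normalization factor (the $t_w^{\pm 1}$ already account for the shift). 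This is the $G/P_J$-analogue of the calculation carried out in the proof of Corollary~\ref{cor:inv}.

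I do not expect any serious obstacle here: the statement is essentially a transcription of the Borel case to parabolic quotients, and all three ingredients (parabolic KL decomposition of $\IC$, the transformation $\MHC_{-t^{-2}}$, and its compatibility with duality) are available on $G/P_J$ with no change in their formal behavior. The only point that requires a little care is bookkeeping with the Tate twist, so that the $\dim X(w)_J$ shift from Verdier self-duality of the IC-sheaf matches the $t_w^{-2}$ discrepancy between $t_w\ep_w$ and $t_w^{-1}\ep_w$; this is routine provided one imports the same conventions used throughout Section~\ref{sec:dual}.
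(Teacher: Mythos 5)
Your proposal matches the paper's intended argument exactly: the paper states the parabolic decomposition $[\IC_{X(w)_J}^H]=\sum_{u\in W^J,\,u\le w}\ep_w P_{u,w}^J[i_{u!}\bbQ_{X(u)^\circ_J}^H]$ from \cite[Corollary~5.1]{KT02} right before the proposition and then leaves the proof implicit, since it is the verbatim parabolic analogue of Proposition~\ref{prop:cwIC} and Corollary~\ref{cor:inv}. Your steps (apply $\MHC_{-t^{-2}}$, invoke \eqref{equ:MHC} to convert to motivic Chern classes, match against the definition of $\kl^J_w$, and then use \eqref{equ:D} together with self-duality of the IC sheaf and the Tate-twist bookkeeping for the duality statement) are precisely the argument the paper relies on.
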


Recall $\pi_J:G/B\rightarrow G/P_J$ denotes the natural projection. The relation between $\kl_w$ and $\kl_w^J$ is given by the following proposition.
\begin{prop}
Let $\mathcal{P}_J(t)=\sum_{v\in W_J}t_v$ be the Poincar\'e  polynomial of $W_J$. then for any $w\in W^J$,
\[\pi_{J*}(\kl_{ww_J})=t_{w_J}^{-1}\mathcal{P}_J(t^2) \kl_w^J\in K_T(G/P_J)[t^{\pm 1}]\,.\]
\end{prop}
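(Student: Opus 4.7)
The strategy is to expand $\kl_{ww_J}$ as a linear combination of motivic Chern classes of Schubert cells in $G/B$, push each term forward via $\pi_{J*}$, and recognize the result as $\kl_w^J$ times the claimed scalar.

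First, applying formula \eqref{equ:cw} to $ww_J$: since $w\in W^J$, every $u\leq ww_J$ decomposes uniquely as $u=vu'$ with $v\in W^J$ satisfying $v\leq w$ and $u'\in W_J$ (a standard property of the parabolic Bruhat order), and Lemma~\ref{lemma:pkl} gives $P_{vu',ww_J}=P^J_{v,w}$. Thus
\[\kl_{ww_J} = t_{ww_J}\sum_{v\in W^J,\, v\leq w}P^J_{v,w}(t^{-2}) \sum_{u'\in W_J}\MC_{-t^{-2}}(X(vu')^\circ).\]

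Second, I would compute $\pi_{J*}\MC_{-t^{-2}}(X(vu')^\circ)$. Since $\ell(vu')=\ell(v)+\ell(u')$, we have $\tau_{vu'}=\tau_v\tau_{u'}$, so Theorem~\ref{thm:smoothmc}~(1) gives $\MC_{-t^{-2}}(X(vu')^\circ) = t_{vu'}^{-1}\tau_v\tau_{u'}\odot\pt^m_e$. Since $\pi_{J*}$ is implemented by $Y_J^m\bullet\_$ and the two actions commute, this pushforward equals $t_{vu'}^{-1}\tau_v\tau_{u'}\odot [eP_J]$, where $[eP_J]:=Y_J^m\bullet\pt^m_e$. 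A direct computation shows $\de_i\odot [eP_J]=[eP_J]$ for $i\in J$ (reflecting the $W_J$-symmetry $s_i(x_\Pi/x_J)=x_\Pi/x_J$), hence $\tau_i\odot [eP_J] = t^{-1}[eP_J]$ and, by induction on length, $\tau_{u'}\odot [eP_J] = t_{u'}^{-1}[eP_J]$ for all $u'\in W_J$. On the other hand, for $v\in W^J$ the restriction $\pi_J|_{X(v)^\circ}$ is an isomorphism onto $X(v)_J^\circ$, so $\tau_v\odot [eP_J] = \pi_{J*}(\tau_v\odot\pt^m_e) = t_v\MC_{-t^{-2}}(X(v)_J^\circ)$. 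Combining these,
\[\pi_{J*}\MC_{-t^{-2}}(X(vu')^\circ) = t_{u'}^{-2}\MC_{-t^{-2}}(X(v)_J^\circ).\]

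Summing over $u'\in W_J$ yields the factor $\sum_{u'\in W_J}t_{u'}^{-2}=\mathcal{P}_J(t^{-2})$, and using $t_{ww_J}=t_wt_{w_J}$ together with the definition of $\kl_w^J$ gives $\pi_{J*}(\kl_{ww_J}) = t_{w_J}\mathcal{P}_J(t^{-2})\kl_w^J$. The form stated in the proposition follows from the Poincar\'e polynomial symmetry $\mathcal{P}_J(t^{-2}) = t_{w_J}^{-2}\mathcal{P}_J(t^2)$, which is induced by the involution $v\mapsto vw_J$ on $W_J$. The main technical step is the Hecke identity $\tau_{u'}\odot [eP_J] = t_{u'}^{-1}[eP_J]$ for $u'\in W_J$; it amounts to saying that $[eP_J]$ is a weight vector for the $\odot$-action of the parabolic Hecke subalgebra attached to $W_J$, and once this is established, everything else is formal bookkeeping.
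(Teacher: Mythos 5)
Your proof is correct, and its overall skeleton matches the paper's: expand $\kl_{ww_J}$ via \eqref{equ:cw}, factor $u\le ww_J$ as $vu'$ with $v\in W^J$, $v\le w$, $u'\in W_J$, invoke Lemma~\ref{lemma:pkl}, push forward term by term, and collect. The genuine difference is in how you obtain the key pushforward formula $\pi_{J*}\MC_{-t^{-2}}(X(vu')^\circ)=t_{u'}^{-2}\MC_{-t^{-2}}(X(v)_J^\circ)$. The paper simply cites \cite[Remark~5.5]{AMSS19} and records that it also follows from the mixed Hodge module identity $\pi_{J!}(i_{uv!}\bbQ^H_{X(uv)^\circ})=\bbQ^H_{X(u)_J^\circ}[-2\ell(v)](-\ell(v))$. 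You instead derive it internally from the Hecke-operator description of $K_T(G/B)$: you show that the parabolic point class $[eP_J]=Y^m_J\bullet\pt^m_e=\tfrac{x_\Pi}{x_J}\sum_{u\in W_J}f_u$ satisfies $\delta_i\odot[eP_J]=[eP_J]$ for $i\in J$ (because $s_i$ fixes $x_\Pi/x_J$ and permutes $\{f_u\}_{u\in W_J}$), hence $\tau_i\odot[eP_J]=t^{-1}[eP_J]$ (the coefficients of $\tau_i$ sum to $t^{-1}$), and then use $\tau_{vu'}=\tau_v\tau_{u'}$ together with the commutativity of the $\bullet$- and $\odot$-actions and the fact that $\pi_J|_{X(v)^\circ}$ is an isomorphism onto $X(v)_J^\circ$ for $v\in W^J$. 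This Hecke-algebraic argument is more self-contained and elementary (it avoids mixed Hodge modules entirely and stays inside the $\bfD_m$-module machinery of Section~2), at the modest cost of being a bit longer; the paper's route is shorter but leans on an external reference or MHM theory. Both are valid, and your closing observation --- that the whole computation reduces to $[eP_J]$ being a weight vector for the parabolic Hecke subalgebra under $\odot$ --- is a clean conceptual summary of why the statement is true.
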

\begin{proof}
By \cite[Remark 5.5]{AMSS19}, for any $u\in W^J$ and $v\in W_J$,
\[\pi_{J*}(\MC_{-t^{-2}})(X(uv)^\circ)=t_v^{-2}\MC_{-t^{-2}}(X(u)_J^\circ)\,,\]
which also follows directly from the following identity about mixed Hodge modules 
\[\pi_{J!}(i_{uv!}\bbQ^H_{X(uv)^\circ})=\bbQ_{X(u)_J^\circ}^H[-2\ell(v)](-\ell(v))\,.\]
Thus,
\begin{align*}
\pi_{J*}(\kl_{ww_J})=&\sum_{u\in W^J, u\leq w}\sum_{v\in W_J}t_wt_{w_J}P_{uv,ww_J}\pi_{J*}\MC_{-t^{-2}}(X(uv)^\circ)\\
=&\sum_{u\in W^J, u\leq w}t_wt_{w_J}P^J_{u,w}\MC_{-t^{-2}}(X(u)_J^\circ)\sum_{v\in W_J}t_v^{-2}\\
=&\kl_w^J\sum_{v\in W_J}t_v^{-2}t_{w_J}=C_w^J\sum_{v\in W_J}t_{w_J}^{-1}t^2_{w_J}t_v^{-2}=C_w^J\sum_{v\in W_J}t_{w_J}^{-1}t_{vw_J}^{2}=C_w^Jt_{w_J}^{-1}\mathcal{P}_J(t^2)\,, 
\end{align*}
where the second equality follows from Lemma \ref{lemma:pkl}.
\end{proof}

\section{The smoothness conjecture for hyperbolic cohomology}
In this section, we use the smoothness criterion to prove the Smoothness Conjecture. Since we will be working with multiplicative and hyperbolic formal group laws in the same time, we add superscripts or subscripts $m$ (resp. $t$) in the multiplicative case (resp.  hyperbolic case). 
\subsubsection*{The hyperbolic case} Set $\mu=t+t^{-1}$.  
Consider the hyperbolic formal group law over $R$
\[
F_t(x,y):=\tfrac{x+y-xy}{1-\mu^{-2}xy}\,.
\] The definitions of Section~\ref{sec:DEM} applied to $F_t$
give the respective rings
\[
S_t,\; Q_t,\; Q_{t,W},\; \bfD_t\,. 
\]

Consider a map of formal group laws
\[
g\colon F_t\to F_m\,, \quad g(x)=\tfrac{(1-t^2)x}{x-(t^2+1)}\,, 
\]
so that $F_m(g(x), g(y))=g(F_t(x,y))$. It induces ring embeddings
\[\psi\colon S_m\hookrightarrow S_t\,, \quad \psi(f(x_\la))=f(g(x_\la)), \quad f(x)\in R[[x]]\,, 
\] and  
\begin{equation}\label{eq:psi}
\psi\colon Q_m\hookrightarrow R\left[\tfrac{1}{1-t^2}\right]\otimes Q_t\,.
\end{equation}
Consequently, we have a ring embedding
\[
\psi:Q_{m,W}\to R\left[\tfrac{1}{1-t^{2}}\right]\otimes_R Q_{t, W}\,,  \quad \psi(p\de_w^m)=\psi(p)\de_w^t\,, \quad p\in Q_m\,, w\in W\,.
\]
It can be shown that 
\begin{equation}\label{eq:psi2}
\psi(\tau_i)=\mu Y_i^t-t\in \bfD_t \subset Q_{t,W}\,.
\end{equation}
Note that in \eqref{eq:psi},  for the target, we have to invert $t^2-1$, but for the one in \eqref{eq:psi2}, it is not necessary. 

One of the most interesting properties of $\psi$ is the following  (see \cite[Corollary~5.5~(2)]{LZZ}): 
\begin{equation}  \label{eq:gammapsirel}
\mu^{-\ell(w_{J/J'})}\psi(\gamma_{J/J'})Y^t_{J'}=Y^t_J\,.
\end{equation}
In other words, $\psi(\gamma_{J/J'})$ behaves like a replacement of $Y_{J/J'}$; see \cite[Remark 5.6]{LZZ}.  In particular, letting $J'=\emptyset$, one then has
\[\mu^{-\ell(w_J)}\psi(\gamma_{w_J})=Y_J^t\,.\]

Let $\fh$ denote the respective oriented cohomology theory for the hyperbolic formal group law $F_t$. 

\begin{dfn}\label{def:KL} Define the KL-Schubert class for $w\in W^J$ to be 
\[
\KL_w^J:=\mu^{-\ell(ww_J)}\psi(\gamma_{ww_J})\odot\pt^t_e\in (\bfD_{t}^*)^{W_J}\cong \fh_T(G/P_J)\,.
\]
\end{dfn}


\begin{rema}{\rm 
Following \cite{LZZ} one can define certain involution on some subset $\calN_J:=\psi(H)\odot \pt^t_e\subset\bfD_t^*$ so that $\KL_v^J$ is invariant under such involution, similar to the parabolic Kazhdan-Lusztig basis of Deodhar. }
\end{rema}

We now prove the Smoothness Conjecture \cite[Conjecture 5.14]{LZZ}. Several special cases were proved in~\cite{LZ17,LZZ}, such as the case of $w=w_{J/J'}$ for $J'\subset J\subseteq\Pi$ (i.e., $w$ has `relative' maximal length), and that of Schubert varieties in complex projective spaces.

\begin{thm}\label{thm:main} If $X(w)$ is smooth, then the class determined by $X(w)$ in $\fh_T(G/B)$ coincides with the KL-Schubert class $\KL_w$.
\end{thm}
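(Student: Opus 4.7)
The plan is to verify the identity $\KL_w = [X(w)]$ at the level of restrictions to $T$-fixed points via the injective embedding $\bfD_t^* \hookrightarrow Q_{t,W}^*$ of axiom~(4) in Section~\ref{sec:DEM}. Under the smoothness hypothesis, all Kazhdan--Lusztig polynomials $P_{v,w}$ equal $1$ for $v \le w$, so $\gamma_w = t_w \Gamma_w = t_w \sum_{u \le w} a_{w,u}\, \delta_u^m$, with the coefficients $a_{w,u}$ given explicitly by Corollary~\ref{cor:smooth}. Moreover, smoothness forces $|\{\alpha > 0 : u s_\alpha \le w\}| = \dim X(w) = \ell(w)$, so each such product has exactly $\ell(w)$ factors. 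Applying $\psi$ termwise and the $\odot$-action on $\pt^t_e = x^t_\Pi f_e$ gives
\[
\KL_w \;=\; \mu^{-\ell(w)}\, t_w \sum_{u \le w} \psi(a_{w,u})\, u(x^t_\Pi)\, f_u.
\]

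The technical crux is the elementary identity
\[
\psi\!\left(\frac{1 - t^{-2} e^{u\alpha}}{1 - e^{u\alpha}}\right) \;=\; \frac{1 + t^{-2}}{x^t_{-u\alpha}},
\]
which I would verify by direct substitution using $g(x) = (1-t^2)x/(x-(t^2+1))$, together with $1 - e^{u\alpha} = x^m_{-u\alpha}$ and $\psi(x^m_\lambda) = g(x^t_\lambda)$. Taking the $\ell(w)$-fold product and using the normalization $t(1+t^{-2}) = t + t^{-1} = \mu$, the prefactor $\mu^{-\ell(w)} t_w$ cancels exactly against the accumulated $(1 + t^{-2})^{\ell(w)}$, yielding
\[
\KL_w \;=\; \sum_{u \le w} \Bigg(\prod_{\alpha > 0,\, u s_\alpha \not\le w} x^t_{-u\alpha}\Bigg)\, f_u.
\]

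The coefficient of $f_u$ is precisely the equivariant Euler class at $\fe_u$ of the normal bundle of the smooth subvariety $X(w) \subset G/B$, while for $u \not\le w$ the coefficient vanishes because $a_{w,u} = 0$. This matches the restriction to torus fixed points of the fundamental class $[X(w)] \in \fh_T(G/B)$, so the embedding $\bfD_t^* \hookrightarrow Q_{t,W}^*$ forces $\KL_w = [X(w)]$. The main obstacle is the numerical conspiracy in the key algebraic step: one must confirm that the $\psi$-transform of the smoothness formula of Corollary~\ref{cor:smooth} produces precisely the factors of $(1+t^{-2})$ that cancel the normalization $\mu^{-\ell(w)} t_w$ built into Definition~\ref{def:KL}; after this identity is in hand, everything reduces to bookkeeping between the multiplicative and hyperbolic sides.
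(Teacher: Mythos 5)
Your proposal is correct and follows essentially the same route as the paper: reduce to fixed-point restrictions, use Corollary~\ref{cor:smooth} for the coefficients $a_{w,u}$, apply the key identity $\psi\bigl(\tfrac{1-t^{-2}e^{\alpha}}{1-e^{\alpha}}\bigr)=\tfrac{t^{-1}\mu}{x_{-\alpha}}$ (your $1+t^{-2}$ equals $t^{-1}\mu$), cancel $\mu^{-\ell(w)}t_w$ using $|\{\alpha>0:s_\alpha u\le w\}|=\ell(w)$, and compare with the known restriction formula for $[X(w)]$. The only cosmetic difference is that the paper simplifies the final coefficient to the $u$-independent form $\prod_{\alpha>0,\,s_\alpha u\nleq w}x_{-\alpha}$ to match the cited formula~(5.6) of \cite{LZZ}, whereas you stop at the equivalent expression $\prod_{\alpha>0,\,us_\alpha\nleq w}x_{-u\alpha}$ and identify it directly as the Euler class of the normal bundle at $\fe_u$.
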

\begin{proof}
Since $X(w)$ is smooth, $P_{v,w}=1$ for any $v\le w$, see \cite[6.1.19]{BL00}. Therefore, 
\begin{align*}
\gamma_{w}=\sum_{v\le w}t_wt_v^{-1}\tau_v=t_w\sum_{v\le w}t_v^{-1}\tau_v
=t_w\Gamma_w=t_w\sum_{v\le w}a_{w,v}\de_v^m\,.
\end{align*}
From the definition of $\psi$, it is easy to verify that 
\begin{equation}\label{eq:psi1}
\psi\left(\frac{1-t^{-2}e^\al}{1-e^\al}\right)=\frac{t^{-1}\mu}{x_{-\al}}\,. 
\end{equation}
Then for any $w\in W$, we have
\begin{align*}
\KL_w&=\mu^{-\ell(w)}\psi(\gamma_{w})\odot \pt^t_e\\
&=\mu^{-\ell(w)}\psi\left(t_w\sum_{v\le w}a_{w,v}\de_v^m\right)\odot \pt_e^t\\
&\overset{\text{Cor. }\ref{cor:smooth}}=\mu^{-\ell(w)}t_w\sum_{v\le w}\psi\left(\prod_{\al>0,\; vs_\al\le w}\frac{1-t^{-2}e^{u\al}}{1-e^{u\al}}\right)\de_v^t\odot \pt_e^t\\
&\overset{\eqref{eq:act},\eqref{eq:psi1}}=\mu^{-\ell(w)}t_w\sum_{v\le w}\left(\prod_{\al>0,\; vs_\al\le w}\frac{t^{-1}\mu}{x_{-v\al}}\right)\cdot v(x_\Pi^t)f_v^t\\
&=\sum_{v\le w}v\left(\frac{\prod_{\al<0}x_\al}{\prod_{\al<0,\; vs_\al\le w}x_{\al}}\right)f_v^t\\
&=\sum_{v\le w}\frac{\prod_{\al>0}{x_{-\al}}}{\prod_{\al>0,\; s_\al v\le w}x_{-\al}}f_v^t\,. 
\end{align*}
Here the fifth identity follows from the following well-known fact:
\[
\textit{for any } v\leq w\in W, \textit{ if  X(w) is smooth,\quad} 
|\{\al>0\mid s_\al v\le w\}|=\ell(w)\,,
\]
and the last one is proved as follows: for any $v\leq w\in W$,
\begin{align*}
\frac{\prod_{\al<0}x_{v\al}}{\prod_{\al<0,\; vs_\al\le w}x_{v\al}}=&\frac{\prod_{\alpha>0,\;s_\alpha v<v}x_\alpha \cdot \prod_{\alpha>0,\;v<s_\alpha v}x_{-\alpha}}{\prod_{\alpha>0,\;s_\alpha v<v}x_\alpha \cdot \prod_{\alpha>0,\;v<s_\alpha v\leq w}x_{-\alpha}}\\
=&\frac{\prod_{\alpha>0,\;s_\alpha v<v}x_{-\alpha}\cdot \prod_{\alpha>0,\;v<s_\alpha v}x_{-\alpha}}{\prod_{\alpha>0,\;s_\alpha v<v}x_{-\alpha}\cdot \prod_{\alpha>0,\;v<s_\alpha v\leq w}x_{-\alpha}}\\
=&\frac{\prod_{\alpha>0}x_{-\alpha}}{\prod_{\alpha>0,\;s_\alpha v\leq w}x_{-\alpha}}\,.
\end{align*}
Comparing with the restriction formula of $[X(w)]$ in \cite[(5.6)]{LZZ}, we  see that $\KL_w=[X(w)]$. The proof is finished. 
\end{proof}


We now look at the case of partial flag varieties. Let $P_J$ be the parabolic subgroup with the projection map $\pi_J:G/B\to G/P_J$. Let $w_J$ be the longest element in the subgroup $W_J$ of $W$ determined by $J$, and $W^J\subset W$ be the set of minimal length representatives of $W/W_J$. Recall $X(w)_J$ denotes the Schubert variety of $G/P_J$ determined by $w\in W^J$.

For $G/P_J$, the definition of KL-Schubert class $\KL_w^J$ corresponding to $w\in W^J$ is defined by using the so-called parabolic Kazhdan-Lusztig basis. According to the paragraph right after \cite[Definition 5.9]{LZZ}, via the embedding $\pi_J^*:\fh_T(G/P_J)\to \fh_T(G/B)$,  we have 
\[
\pi_J^*(\KL_w^J)=\KL_{ww_J}\,. 
\]
\begin{cor}\label{cor:mainP}
Conjecture $5.14$ of \cite{LZZ} holds for any partial flag variety $G/P_J$, that is, if the Schubert variety $X(w)_J$ of $G/P_J$ is smooth for $w\in W^J$, then the KL-Schubert class $\KL_w^J$ of $w$ coincides with the fundamental class $[X(w)_J]$. 
\end{cor}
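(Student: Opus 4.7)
The plan is to reduce Corollary \ref{cor:mainP} to the already-proven Theorem \ref{thm:main} via pullback along the projection $\pi_J\colon G/B\to G/P_J$. The key observation is that, for $w\in W^J$, the element $ww_J$ is the maximal coset representative of $wW_J$, so the Schubert variety $X(ww_J)\subset G/B$ equals the preimage $\pi_J^{-1}(X(w)_J)$. I would open the proof by recording this set-theoretic identity and noting that $\pi_J$ is a smooth morphism with smooth fibers isomorphic to $P_J/B$.

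The next step is to transfer smoothness: since $X(w)_J$ is smooth and $\pi_J$ is smooth, the preimage $X(ww_J)=\pi_J^{-1}(X(w)_J)$ is also smooth. Theorem \ref{thm:main} then applies to $ww_J\in W$ and gives
\[
[X(ww_J)]=\KL_{ww_J}\in\fh_T(G/B)\,.
\]
On the other hand, the smoothness of $\pi_J$ together with the above preimage description yields
\[
\pi_J^*[X(w)_J]=[X(ww_J)]\in\fh_T(G/B)\,,
\]
i.e.\ the fundamental class is compatible with pullback along the smooth projection $\pi_J$.

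Combining these two equalities with the identity $\pi_J^*(\KL_w^J)=\KL_{ww_J}$ recalled just before the statement of the corollary, we obtain
\[
\pi_J^*\!\left([X(w)_J]\right)=[X(ww_J)]=\KL_{ww_J}=\pi_J^*(\KL_w^J)\,.
\]
The conclusion $[X(w)_J]=\KL_w^J$ then follows from the injectivity of $\pi_J^*\colon\fh_T(G/P_J)\to\fh_T(G/B)$, which is built into the algebraic model of Section~\ref{sec:DEM}: under the isomorphisms $\fh_T(G/P_J)\cong(\bfD_t^*)^{W_J}$ and $\fh_T(G/B)\cong\bfD_t^*$, the pullback $\pi_J^*$ is the inclusion of $W_J$-invariants into the full dual of the formal affine Demazure algebra.

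The main subtlety is not the formal chain of implications but ensuring that the identity $\pi_J^*[X(w)_J]=[X(ww_J)]$ genuinely holds in the generalized oriented cohomology $\fh_T$ (as opposed to Chow groups or $K$-theory), since $\pi_J$ is smooth with smooth fiber $P_J/B$ of the expected relative dimension $\ell(w_J)$, and $X(ww_J)$ is scheme-theoretically the preimage of $X(w)_J$. This is standard functoriality of fundamental classes under smooth pullback for the Levine--Morel theories, but should be cited explicitly to avoid any ambiguity arising from the non-canonicity of Bott--Samelson type classes in general $\bfh_T$.
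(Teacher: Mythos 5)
Your proposal is correct and follows essentially the same route as the paper: both reduce to Theorem~\ref{thm:main} via the projection $\pi_J$, using $\pi_J^{-1}(X(w)_J)=X(ww_J)$, the identity $\pi_J^*(\KL_w^J)=\KL_{ww_J}$, and injectivity of $\pi_J^*$. The only cosmetic difference is that the paper makes the step $\pi_J^*[X(w)_J]=[X(ww_J)]$ explicit by writing down the Cartesian square and invoking proper base change, $\pi_J^*i_*=i'_*\pi_J^*$, whereas you invoke "smooth pullback compatibility of fundamental classes" and flag it as a point to cite; that is the same underlying fact.
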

\begin{proof}
We have the following Cartesian diagram:
\[
\xymatrix{\pi_J^{-1}(X(w)_J)\ar[r]^{i'} \ar[d]^{\pi_J} & G/B\ar[d]^{\pi_J}\\
X(w)_J \ar[r]^i & G/P_J\,.}
\]
Moreover, $\pi_J^{-1}(X(w)_J)=X(ww_J)$. Since $X(w)_J$ is smooth, $X(ww_J)$ is also smooth. Thus, Theorem \ref{thm:main} implies $[X(ww_J)]=\KL_{ww_J}$. On the other hand, we get the following by proper base change:
\[
\pi_J^*[X(w)_J]=\pi_J^*i_*[1_{X(w)_J}]=i'_*\pi_J^*[1_{X(w)_J}]=i_*'[1_{X(ww_J)}]=[X(ww_J)]\,,
\]
where the third equality follows from the fact that the pull-back $\pi_J^*$ preserves identity. Since $\pi_J^*(\KL_w^J)=\KL_{ww_J}$ and $\pi_J^*$ is injective, we get $\KL_w^J=[X(ww_J)]\in \fh_T(G/P_J)$.
\end{proof}

\section{KL-Schubert classes and small resolutions}

In this section, we give a geometric interpretation of the KL-Schubert classes (for hyperbolic cohomology) in the case of type $A$ Grassmannians.

For subsets $J'\subset J\subseteq \Pi$, for hyperbolic cohomology,  we will use relative push-pull elements $Y_{J/J'}^t$ defined in \eqref{eq:relpush}. For simplicity, we will skip the superscript $t$. Moreover, if $Q\subset P$ are the parabolic subgroups corresponding to $J'\subset J$, respectively, we will denote $Y_{P/Q}=Y_{J/J'}$. 

Consider the Grassmannian $Gr_d(\bC^{n-d})=SL_n/P_J$, where the set of simple roots $\Pi$ is identified with $\{1,\ldots,n-1\}$ and $J:=\Pi\setminus\{d\}$. Fix a Schubert variety $X(\lambda)$ of it, which is  indexed by a partition $\lambda=(\lambda_1\ge\ldots\ge\lambda_l>0)$ contained inside the $d\times(n-d)$ rectangle; here we mean that $\lambda$ is identified with a Young diagram (in English notation), whose top left box is placed on the top left box of the mentioned rectangle. 

Alternatively, the Schubert variety $X(\lambda)$ is indexed by a $d$-subset $I_\lambda$ of $[n]:=\{1,\ldots,n\}$, which is constructed as follows. Place the above $d\times(n-d)$ rectangle inside the first quadrant of the $xy$-plane, such that its southwest corner is the origin. Label each horizontal (resp. vertical) unit segment whose left (resp. bottom) endpoint is a lattice point $(x,y)$ by $x+y+1$. Consider the lattice path from $(0,0)$ to $(n-d,d)$ defining the southeast boundary of the Young diagram $\lambda$ when embedded into the $d\times(n-d)$ rectangle as stated above. Then $I_\lambda$ consists of the labels on the vertical steps of this path.

Yet another indexing of the Schubert variety $X(\lambda)$ is by a {\em Grassmannian permutation} $w_\lambda$ in the symmetric group $W=S_n$, which has its unique descent in position $d$. Written in one-line notation, $w_\lambda$ consists of the entries in $I_\lambda$ followed by the entries in $[n]\setminus I_\lambda$, where both sets of entries are ordered increasingly. Thus, $w_\lambda$ belongs to the set $W^J$ of lowest coset representatives modulo the parabolic subgroup $W_J$. Moreover, it has the following reduced decomposition:
\begin{equation}\label{fact}w_\lambda=\prod_{(i,j)\in\lambda}^{\rightarrow} s_{d+j-i}\,;\end{equation}
here $(i,j)$ is the box of the Young diagram $\lambda$ in row $i$ and column $j$, while in the product we scan the rows of $\lambda$ from bottom to top, and each row from right to left. 

\begin{exa}\label{exlam}{\rm We use as a running example the same one as in \cite[Example~9.1.11]{BL00}, namely $n=10$, $d=5$, $\lambda=(5,5,3,2,2)$, $I_\lambda=\{3,4,6,9,10\}$. In order to illustrate~\eqref{fact}, we place the number $d+j-i$ in the box $(i,j)$ of $\lambda$, as follows:
\begin{equation}\label{contents}\tableau{5&6&7&8&9\\4&5&6&7&8\\3&4&5\\2&3\\1&2}\,.\end{equation}
Thus, we have
\begin{equation}\label{exfact}w_\lambda=[3,4,6,9,10,1,2,5,7,8]=(s_2s_1)(s_3s_2)(s_5s_4s_3)(s_8s_7s_6s_5s_4)(s_9s_8s_7s_6s_5)\,.\end{equation}
}\end{exa}

In \cite[Section~9.1]{BL00}, the permutation $w_\lambda$ is identified with the $d$-subset $I_\lambda$, and they are encoded into a $2\times m$ matrix
\begin{equation}\label{2row}\left(\begin{array}{ccc} k_1&\ldots&k_m\\a_1&\ldots&a_m\end{array}\right)\,,\end{equation}
which can be read off from the above lattice path as follows. The entries $0<k_1<\ldots<k_m\le n$ are the labels of the last steps in consecutive sequences of vertical (unit) steps. The entries $a_1,\ldots,a_m$ are the lengths of these sequences. The numbers $b_0,\ldots,b_{m-1}$ calculated in~\cite{BL00} are the lengths of the sequences of horizontal steps, where we set $b_0:=0$ if $l<d$ (i.e., if the lattice path starts with a vertical step). Recall that we also set $a_0=b_m:=\infty$. 

Now recall that the Schubert variety $X(\lambda)$ has {\em small resolutions}, which were defined by Zelevinsky~\cite{Z83}. We briefly recall their construction following~\cite[Section~9.1]{BL00}. This construction starts with the choice of an index~$i$, with $0\le i<m$, such that $b_i\le a_i$ and $a_{i+1}\le b_{i+1}$ (any such choice can be made). While it is clear that such an index always exists, we avoid the choice of $i=0$ if $l<d$. Then, a new permutation $w^2$ is obtained from $w^1:=w_\lambda$ via a certain procedure, which can be rephrased as follows. Consider the $i$-th outer corner of $\lambda$ (counting from $0$), from southwest to northeast, where the origin is an outer corner if and only if $l<d$. Consider the rectangle $R_1$ (inside $\lambda$) whose southeast vertex is the mentioned outer corner, and which is maximal such that its removal from $\lambda$ still leaves a Young diagram. It is clear that the size of $R_1$ is $b_{i}\times a_{i+1}$. Then $w^2$ is the Grassmannian permutation corresponding to the Young diagram $\lambda\setminus R_1$.

The above procedure is then iterated. We thus tile the Young diagram $\lambda$ with rectangles $R_1,\ldots,R_r$. Let us denote by $p_i$ and $q_i$ the height and width of $R_i$, respectively. We also define the sequence of Grassmannian permutations $w^1,\ldots,w^r$, such that the Young diagram of $w^i$ is $\lambda^i:=\lambda\setminus\rho^{i-1}$, where $\rho^j:=R_1\cup\ldots\cup R_j$. In particular, the Young diagram of $w^r$ is $R_r$, and the Schubert variety $X({w^r})$ is smooth. Note that $r=m$ if $l=d$, and $r=m-1$ if $l<d$.

\begin{exa}\label{exzel}{\rm We continue Example~\ref{exlam}. The encoding of $w_\lambda$ by the $2\times m$ matrix~\eqref{2row} and the successive choices of $w^1,\,w^2,\,w^3$ based on it are described in detail in~\cite{BL00}. In our setup, the tiling of $\lambda$ with the corresponding rectangles $R_1,\,R_2,\,R_3$ is illustrated below (the number in a box is the index of the rectangle to which that box belongs).
\[\tableau{3&3&2&2&2\\3&3&2&2&2\\3&3&1\\3&3\\3&3}\]
}
\end{exa}

In order to complete the construction of the Zelevinsky resolution, following~\cite[Section~9.1]{BL00}, we need the stabilizer $P_{w_\lambda}$ of the Schubert variety $X(\lambda)=X(w_\lambda)$. This is the parabolic subgroup corresponding to the subset $\Pi\setminus\{k_1,\ldots,k_m\}$, cf.~\eqref{2row}. More generally, consider the stabilizers $P_i:=P_{w^i}$, for $i=1,\ldots,r$, and  $P_{r+1}:=P_J$; for simplicity, we use the same notation for the corresponding subsets of $\Pi$. Also let $Q_i:=P_i\cap P_{i+1}$, for $i=1,\ldots,r$, both as parabolic subgroups and subsets of $\Pi$. Then the Zelevinsky resolution of $X(w)$ is expressed as follows:
\begin{equation}\label{zelres}P_1\times^{Q_1}P_2\times\ldots\times^{Q_{r-2}}P_{r-1}\times^{Q_{r-1}} X(w^r)=:\widetilde{X}(w_\lambda)\rightarrow X(w_\lambda)\,.\end{equation}
Therefore, the cohomology class of $\widetilde{X}(w_\lambda)$ is computed by the following composition of relative push-pull operators:
\begin{equation}\label{push-pull}Y_{P_1/Q_1}\ldots Y_{P_r/Q_r}Y_J\,.\end{equation}
Here we recall the fact that $Y_{J/J'}\bullet \_:(\bfD_t^*)^{W_{J'}}\to (\bfD_t^*)^{W_J}$ coincides with the canonical map $\fh_T(G/P_{J'})\to \fh_T(G/P_J)$; see \cite[Lemma 8.13]{CZZ3} for more details. 

\begin{exa}\label{exzel1}{\rm Continuing Example~\ref{exzel}, the operator in~\eqref{push-pull} is written explicitly as follows:
\[Y_{(\Pi\setminus\{4,6\})/(\Pi\setminus\{4,5,6\})}\,Y_{(\Pi\setminus\{5\})/(\Pi\setminus\{5,7\})}\,Y_{(\Pi\setminus\{7\})/(\Pi\setminus\{5,7\})}\,Y_{\Pi\setminus\{5\}}\,.\]
Indeed, the parabolic subsets $P_i$ for this examples were exhibited in~\cite{BL00}, while they can also be read off from the Young diagram of $\lambda=(5,5,3,2,2)$ as indicated above.
}\end{exa}

We will now state the main technical result of this section, Theorem~\ref{kl-via-gamma}, which is interesting itself, and  is needed to make the connection with the KL-Schubert classes for the Grassmannian, cf.~\cite{LZZ}. To this end, we introduce more notation in the above setup. Given the rectangle $R_i$, with its embedding into the Young diagram of $\lambda$ and the first quadrant,  let $C_i$ and $D_i$ be the sets of labels on its left vertical side and its top horizontal side, respectively. Let
\[c_i:=\min\,C_i\,,\;\;\:d_i:=\max\,D_i=c_i+p_i+q_i-1\,,\;\;\:C'_i:=C_i\setminus\{\max\,C_i\}\,,\;\;\:D'_i:=D_i\setminus\{d_i\}\,.\] 
Finally, let $J_i:=C_i\sqcup D'_i$ and $J'_i:=C'_i\sqcup D'_i$. 

We also need to define the subsets $K'_i\subsetneq K_i$ of $\Pi$, $i=1,\ldots,r$. First recall that above we defined the shape $\rho^i$ as the union of the rectangles $R_1,\ldots,R_i$. It is not hard to see that $\rho^i$ is a union of completely disjoint Young diagrams (i.e., they do not share even a single point), aligned from southwest to northeast. Let $\mathcal{C}_i$ be set of indices $j\in\{1,\ldots,i\}$ such that the left side of $R_j$ is contained in the left boundary of a component of $\rho^i$. Similarly, let $\mathcal{D}_i$ be set of indices $k\in\{1,\ldots,i\}$ such that the top side of $R_k$ is contained in the top boundary of a component of $\rho^i$. We now define
\[K'_i:=\left(\bigsqcup_{j\in\mathcal{C}_i} C'_j\right)\sqcup\left(\bigsqcup_{k\in\mathcal{D}_i} D'_k\right)\,,\;\;\;\;\;K_i:=K'_i\sqcup\{\max\,C_i\}\,.\]
Note that $J_i\subseteq K_i$ and $J_i'\subseteq K_i'$.

\begin{exa}\label{exzel3}{\rm Continuing Example~\ref{exzel1}, we have
\begin{align*}&K'_1=J'_1=\emptyset\subsetneq K_1=J_1=\{5\}\,,\;\;\;\;K'_2=J'_2=\{6,8,9\}\subsetneq K_2=J_2=\{6,7,8,9\}\,,\\
&J_3'=\{1,2,3,4,6\}\subsetneq J_3=\{1,2,3,4,5,6\}\,,\;\;\;\;K'_3=\{1,2,3,4,6,8,9\}\subsetneq K_3=\{1,2,3,4,5,6,8,9\}\,.\end{align*}
As indicated above, all this information is easily read off from the Young diagram of $\lambda=(5,5,3,2,2)$.
}
\end{exa}

\begin{thm}\label{kl-via-gamma} In $H\subset Q_{m,W} $, we have
\begin{equation}\label{X-gamma}\gamma_{w_\lambda w_J}=\gamma_{J_1/J'_1}\ldots\gamma_{J_r/J'_r}\gamma_J=\gamma_{K_1/K'_1}\ldots\gamma_{K_r/K'_r}\gamma_J\,.\end{equation}
\end{thm}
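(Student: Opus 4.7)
The plan is to establish the two equalities in \eqref{X-gamma} by separate arguments, drawing on the combinatorial geometry of the rectangle tiling.

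For the second equality, I argue factor by factor. The difference sets $E_i := K_i\setminus J_i = K'_i\setminus J'_i$ consist of indices coming from rectangles $R_j, R_k$ (with $j, k \neq i$) whose left or top sides lie on the boundary of a component of $\rho^i$. Because $J_i = \{c_i, c_i+1, \ldots, d_i-1\}$ is a consecutive interval of simple-root indices, and the removal of $\max C_j$ from $C'_j$ and of $d_k$ from $D'_k$ is precisely designed to create a buffer, a direct inspection shows that no index of $E_i$ is adjacent in the Dynkin diagram of $A_{n-1}$ to any index of $J_i$. Hence $W_{K_i} = W_{J_i}\times W_{E_i}$ and $W_{K'_i} = W_{J'_i}\times W_{E_i}$, yielding $W_{K_i}\cap W^{K'_i} = W_{J_i}\cap W^{J'_i}$ and $w_{K_i/K'_i} = w_{J_i/J'_i}$. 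By \eqref{defgjjp}, $\gamma_{K_i/K'_i} = \gamma_{J_i/J'_i}$ as elements of the Hecke algebra.

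For the first equality I argue by induction on $r$. The base case $r=1$ is $\gamma_{w_\lambda w_J} = \gamma_{J_1/J'_1}\gamma_J$, which I verify directly: the reduced decomposition \eqref{fact} identifies $w_\lambda$ with $w_{J_1/J'_1}$ in $W$; the smoothness of $X(\lambda)$ lifts to smoothness of $X(w_\lambda w_J) = \pi_J^{-1}(X(\lambda))$, so all KL polynomials $P_{v, w_\lambda w_J}$ are $1$, and $\gamma_{w_\lambda w_J} = \sum_{v\leq w_\lambda w_J} t_{w_\lambda w_J} t_v^{-1}\tau_v$. The same sum is obtained from $\gamma_{J_1/J'_1}\gamma_J$ using the inclusion $W_{J_1}\cap W^{J'_1}\subseteq W^J$ (which holds because elements of $W_{J_1}$ fix positions outside the support of $J_1$, automatically forcing the descent conditions at $\Pi\setminus J_1\subseteq J$), together with the resulting length additivity $\tau_{v_1}\tau_{v_2} = \tau_{v_1 v_2}$ for $v_1 \in W_{J_1}\cap W^{J'_1}$, $v_2 \leq w_J$. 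The coefficients $t_{w_{J_1/J'_1}} t_{w_J} t_v^{-1} = t_{w_\lambda w_J} t_v^{-1}$ then match.

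For the inductive step, I aim to prove $\gamma_{w_\lambda w_J} = \gamma_{J_1/J'_1}\gamma_{w^2 w_J}$, where $w^2$ is the Grassmannian permutation for $\lambda^2 := \lambda\setminus R_1$; the inductive hypothesis applied to $\lambda^2$ then concludes. I use the uniqueness of the KL basis: the product $\gamma_{J_1/J'_1}\gamma_{w^2 w_J}$ is bar-invariant (each factor is, $\gamma_{J_1/J'_1}$ via Deodhar's parabolic KL-basis interpretation and $\gamma_{w^2 w_J}$ by definition), and has leading term $\tau_{w_\lambda w_J}$ with coefficient one, via the length-additive factorization $w_\lambda = w_{J_1/J'_1}\cdot w^2$ obtained by separating the $R_1$-boxes from the $\lambda^2$-boxes in the scanning order of \eqref{fact}. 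The remaining task is to show that the subleading $\tau_v$-coefficients lie in $t\mathbb{Z}[t]$. For this I invoke the factorization of Grassmannian KL basis elements from \cite{KL} and carefully translate its factors into the relative-KL elements $\gamma_{J_i/J'_i}$ via the boundary labels of the $R_i$. This last translation — the combinatorial heart of the theorem and the Hecke-algebraic analogue of the smallness of Zelevinsky's resolution — is the main obstacle; all other steps are essentially bookkeeping around bar-invariance, length additivity, and the parabolic coset decomposition of Bruhat order.
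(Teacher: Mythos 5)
Your treatment of the second equality $\gamma_{J_i/J'_i}=\gamma_{K_i/K'_i}$ is essentially the paper's own argument (Lemma~\ref{PQ} combined with the case analysis of Lemma~\ref{j-to-k}): you correctly identify that the difference set $E_i=K_i\setminus J_i=K'_i\setminus J'_i$ is non-adjacent to the interval $J_i=[c_i,d_1-1]$, so that the relevant parabolic quotients coincide. Your base case for the first equality (a single rectangle, via smoothness and length-additive parabolic decomposition) is also sound.

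The inductive step, however, has a genuine gap, and the gap is precisely where your proof diverges from the paper's. You claim that $\gamma_{J_1/J'_1}$ is bar-invariant ``via Deodhar's parabolic KL-basis interpretation.'' This is false. Already in type $A_2$ with $J_1=\{1,2\}$, $J'_1=\{1\}$ (which arises from the Grassmannian $Gr_2(\bbC^3)$ and the $2\times1$ rectangle), one computes
\[
\gamma_{J_1/J'_1}=t^2\tau_e+t\tau_{s_2}+\tau_{s_1s_2},\qquad
\overline{\gamma_{J_1/J'_1}}=(t^2-1+t^{-2})\tau_e+(t-t^{-1})\tau_{s_1}+t\tau_{s_2}+\tau_{s_1s_2},
\]
so $\overline{\gamma_{J_1/J'_1}}\neq\gamma_{J_1/J'_1}$ (the $\tau_{s_1}$ coefficient appears). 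Deodhar's interpretation only makes the \emph{image} of $\gamma_{J_1/J'_1}$ bar-invariant in the parabolic module $H\gamma_{J'_1}$, not $\gamma_{J_1/J'_1}$ itself in $H$. Thus the product $\gamma_{J_1/J'_1}\gamma_{w^2w_J}$ is not visibly bar-invariant, and the uniqueness-of-KL-basis argument does not get off the ground as stated. (One could in principle try to show that $\overline{\gamma_{J_1/J'_1}}-\gamma_{J_1/J'_1}$ annihilates $\gamma_{w^2w_J}$, but that requires knowing that $\gamma_{J'_1}$ divides $\gamma_{w^2w_J}$ on the left --- which is itself part of what the theorem is trying to establish.) Beyond this, you explicitly flag the ``translation'' of the Kirillov--Lascoux $Z$-factorization into relative $\gamma$-elements as unresolved; this is indeed the technical heart, and leaving it open means the proof is incomplete.

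The paper avoids bar-invariance arguments entirely. It applies Theorem~\ref{res-kl} ($Z_{w_\lambda}\gamma_J=\gamma_{w_\lambda w_J}$) and the factorization~\eqref{factz} to get $\gamma_{w_\lambda w_J}=Z_{v^1}\gamma_{w^2w_J}$; then, instead of comparing coefficients, it rewrites the inductive expression for $\gamma_{w^2w_J}$ using the anti-involution $\hiota$ (Lemma~\ref{use-i}) so that $\gamma_{K'_1}$ appears as a \emph{left} factor. This makes the needed identity $Z_{v^1}\gamma_{K'_1}=\gamma_{K_1}$ a second direct application of Theorem~\ref{res-kl}, after which one reassembles. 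The nested chain $K_1\supsetneq K'_1\subsetneq K_2\supsetneq\cdots$ from Lemma~\ref{j-to-k}~(1) is what makes this reshuffling legal and is exactly why the $K$-version of the factorization is needed alongside the $J$-version (the $J_i$ are stable under passing to $\lambda^2$ and hence usable for induction; the $K_i$ satisfy the nesting and are usable for the $\hiota$-manipulation). Your proposal does not isolate this nesting property or the $\hiota$-reshuffling, which is where the actual work of the proof lives.
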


In order to prove Theorem~\ref{kl-via-gamma}, we start by recalling some results from~\cite{KL}, related to the factorization of Kazhdan-Lusztig elements for the Grassmannian. 
This paper introduces an element  $Z_{w_\lambda}$ of the Hecke algebra, defined as a product of linear factors in the generators, which are associated with the boxes of the Young diagram $\lambda$. Instead of recalling the precise definition, which is not needed here, we will state a weaker form of the factorization, which turns out to be related to factorizations in~\eqref{X-gamma}. We will use notation introduced above.

The rectangle $R_i$ corresponds to the following Grassmannian permutation, cf.~\eqref{fact} and Example~\ref{exlam}:
\[v^i:=(s_{c_i+q_i-1}\ldots s_{c_i})(s_{c_i+q_i}\ldots s_{c_i+1})\ldots(s_{c_i+p_i+q_i-2}\ldots s_{c_i+p_i-1})\,.\]
It is not hard to see that we have the following factorization of $w_\lambda$, which corresponds to a reduced decomposition of $w_\lambda$ obtained from~\eqref{fact} only by commuting simple reflections:
\begin{equation}\label{newred}w_\lambda=v^1\ldots v^r\,.\end{equation}

\begin{exa}{\rm In our running example, the reduced decomposition corresponding to~\eqref{newred} (to be compared with~\eqref{exfact}, cf. also~\eqref{contents}) is
\[ w_\lambda=[3,4,6,9,10,1,2,5,7,8]=\underbrace{(s_5)}_{v^1}\underbrace{((s_8s_7s_6)(s_9s_8s_7))}_{v^2}\underbrace{((s_2s_1)(s_3s_2)(s_4s_3)(s_5s_4)(s_6s_5))}_{v^3}\,.\]
}\end{exa}

The factorization of $Z_{w_\lambda}$ needed here is the following one, which corresponds to the factorization~\eqref{newred} of $w_\lambda$:
\begin{equation}\label{factz}Z_{w_\lambda}=Z_{v^1}Z_{w^2}=Z_{v^1}\ldots Z_{v^r}\,.\end{equation}
See the proof of \cite[Theorem~3]{KL} for details.

The connection between the element $Z_{w_\lambda}$ and the corresponding parabolic Kazhdan-Lusztig basis element is made in \cite[Theorem~3]{KL}. 

\begin{thm}\label{res-kl}\cite{KL} In $H\subset Q_{m,W}$, we have
\[Z_{w_\lambda}\gamma_J=\gamma_{w_\lambda w_J}\,.\]
\end{thm}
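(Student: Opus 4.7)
My plan proceeds in three steps.

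\emph{Step 1 (the two factorizations coincide).} I would first show that $\gamma_{J_i/J'_i} = \gamma_{K_i/K'_i}$ in $H$ for every $i$, so that it suffices to prove one of the two asserted product formulas. By construction $K_i\setminus J_i = K'_i\setminus J'_i$, and this common difference consists of labels of boundary edges of rectangles $R_j$ ($j<i$) in the Zelevinsky tiling. The geometry of the tiling forces each such label to be non-adjacent, in the type-$A$ Dynkin diagram $\Pi = \{1,\ldots,n-1\}$, to every label in $J_i$ (since the relevant edges belong to rectangles geometrically separated from $R_i$ in the Young diagram). Thus the ``extra'' simple reflections commute with every element of $W_{J_i}$, which in turn yields both $W_{J_i}\cap W^{J'_i} = W_{K_i}\cap W^{K'_i}$ (as subsets of $W$) and $w_{J_i/J'_i} = w_{K_i/K'_i}$. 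By the defining sum \eqref{defgjjp}, the two parabolic KL elements then agree.

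\emph{Step 2 (reduction to the rectangle case).} Combining Theorem~\ref{res-kl} with the factorization \eqref{factz} of $Z_{w_\lambda}$ gives
\[\gamma_{w_\lambda w_J} \;=\; Z_{w_\lambda}\gamma_J \;=\; Z_{v^1}Z_{v^2}\cdots Z_{v^r}\gamma_J\,.\]
The theorem is thus reduced to the claim that $Z_{v^i} = \gamma_{J_i/J'_i}$ in $H$ for each $i=1,\ldots,r$. Here $v^i = w_{J_i/J'_i}$ is the ``full rectangle'' Grassmannian permutation in $(W_{J_i},W_{J'_i})$; the corresponding Schubert variety inside the sub-Grassmannian $W_{J_i}/W_{J'_i}$ is the whole ambient space, hence smooth. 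The target element $\gamma_{J_i/J'_i}$ is, by \eqref{defgjjp}, the clean sum $\sum_{v\in W_{J_i}\cap W^{J'_i}} t_{v^i}t_v^{-1}\tau_v$.

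\emph{Step 3 (identifying $Z_{v^i}$ with $\gamma_{J_i/J'_i}$).} The Kirillov--Lascoux construction expresses $Z_{v^i}$ as a product involving only generators $\tau_j$ with $j\in J_i$, so $Z_{v^i}$ lies in the sub-Hecke algebra $H_{J_i}$ associated to the (type-$A$) parabolic $W_{J_i}$. I would then apply Theorem~\ref{res-kl} \emph{internally} in $H_{J_i}$, viewing $v^i$ as a Grassmannian permutation in the pair $(W_{J_i},W_{J'_i})$, to obtain $Z_{v^i}\gamma_{J'_i} = \gamma_{v^iw_{J'_i}} = \gamma_{w_{J_i}} = \gamma_{J_i}$. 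On the other hand, \eqref{mult-gamma} yields $\gamma_{J_i/J'_i}\gamma_{J'_i} = \gamma_{J_i}$. Hence $Z_{v^i}\gamma_{J'_i} = \gamma_{J_i/J'_i}\gamma_{J'_i}$, and the common right factor $\gamma_{J'_i}$ cancels because $H_{J_i}$ is a free right $H_{J'_i}$-module on the basis $\{\tau_v : v\in W_{J_i}\cap W^{J'_i}\}$, on which both $Z_{v^i}$ and $\gamma_{J_i/J'_i}$ are supported.

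The main obstacle I anticipate is precisely this last right-support verification for $Z_{v^i}$, which requires unpacking the \cite{KL} definition to check that $Z_{v^i}$ is right-invariant in the appropriate sense under $W_{J'_i}$. An alternative route that circumvents the cancellation step is to expand the \cite{KL} product defining $Z_{v^i}$ directly in the $\tau$-basis for a single rectangle and to match it term-by-term with the explicit formula $\sum_{v\in W_{J_i}\cap W^{J'_i}} t_{v^i}t_v^{-1}\tau_v$; this is more computational but avoids any subtlety about submodule structure.
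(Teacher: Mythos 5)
Your proposal does not prove Theorem~\ref{res-kl}: it is circular. In Step~2 you write ``Combining Theorem~\ref{res-kl} with the factorization \eqref{factz} of $Z_{w_\lambda}$ gives $\gamma_{w_\lambda w_J}=Z_{w_\lambda}\gamma_J=Z_{v^1}\cdots Z_{v^r}\gamma_J$,'' and in Step~3 you propose to ``apply Theorem~\ref{res-kl} internally in $H_{J_i}$.'' In both places the identity $Z_{w_\lambda}\gamma_J=\gamma_{w_\lambda w_J}$ --- the very statement you were asked to establish --- is used as a known fact. What you have actually outlined is an argument toward Theorem~\ref{kl-via-gamma} (the factorization $\gamma_{w_\lambda w_J}=\gamma_{J_1/J'_1}\cdots\gamma_{J_r/J'_r}\gamma_J=\gamma_{K_1/K'_1}\cdots\gamma_{K_r/K'_r}\gamma_J$): your Step~1 is the content of Lemma~\ref{j-to-k}~(2), and your Steps~2--3 are a variant of the paper's inductive proof of Theorem~\ref{kl-via-gamma}, which does indeed take Theorem~\ref{res-kl} and \eqref{factz} as inputs.

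Theorem~\ref{res-kl} itself is not proved in this paper; it is imported directly from Kirillov--Lascoux \cite{KL} (their Theorem~3). A genuine proof would have to engage with the explicit product definition of $Z_{w_\lambda}$ in \cite{KL} (which none of your steps do) and verify the bar-invariance and degree conditions characterizing the Kazhdan--Lusztig basis element $\gamma_{w_\lambda w_J}$, or else match coefficients in the $\tau$-basis directly. If you instead intended to prove Theorem~\ref{kl-via-gamma}, the outline is plausible, but Step~1 as phrased is a little too loose: it is not that all labels in $K_i\setminus J_i$ come from rectangles ``geometrically separated'' from $R_i$ (adjacent rectangles $R_j$ do contribute labels), but rather that, by construction, $K_i\cap B=J_i$ for $B=\{c_i,\ldots,d_i\}$ and $c_i-1\notin K_i$, which places $K_i\setminus J_i$ outside the interval $[c_i-1,d_i]$ and lets Lemma~\ref{PQ} apply. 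Your Step~3 cancellation concern is also real and is exactly why the paper's proof of Theorem~\ref{kl-via-gamma} proceeds by repeatedly absorbing factors via $\hiota$ and \eqref{mult-gamma} rather than by a right-cancellation of $\gamma_{J'_i}$.
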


The proof of Theorem~\ref{kl-via-gamma} also relies on the following lemmas.

\begin{lem}\label{PQ} Consider $J'\subset J\subseteq \Pi$, and assume that $J\subset [a,b]$ with $a,b\in \Pi$. If $A\subseteq \Pi\setminus[a-1,b+1]$, then we have 
\begin{equation*}\gamma_{J/J'}=\gamma_{J\sqcup A/J'\sqcup A}\in Q_{m,W} \,,\;\;\;\;\;Y_{J/J'}=Y_{J\sqcup A/J'\sqcup A}\in \bfD_t\,.
\end{equation*}
\end{lem}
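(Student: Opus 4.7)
The plan is to exploit the non-adjacency hypothesis $A\subseteq\Pi\setminus[a-1,b+1]$ to completely decouple $A$ from $J$ (and from $J'$). First I would observe that no element of $A$ is adjacent in the Dynkin diagram to any element of $J\subseteq[a,b]$, so the simple reflections $\{s_i:i\in A\}$ commute with $\{s_j:j\in J\}$. This gives internal direct product decompositions $W_{J\sqcup A}=W_J\times W_A$ and $W_{J'\sqcup A}=W_{J'}\times W_A$, with lengths adding. In particular the longest elements factor as $w_{J\sqcup A}=w_Jw_A$ and $w_{J'\sqcup A}=w_{J'}w_A$, so
\[w_{J\sqcup A/J'\sqcup A}=w_{J\sqcup A}w_{J'\sqcup A}=w_Jw_Aw_{J'}w_A=w_Jw_{J'}=w_{J/J'}\,,\]
using that $w_A$ commutes with $W_J\cup W_{J'}$ and that $w_A^2=e$. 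Thus the scalar $t_{w_{J/J'}}$ in \eqref{defgjjp} is unchanged.

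Next I would identify the index sets. Writing $v\in W_{J\sqcup A}$ uniquely as $v=v_Jv_A$ with $v_J\in W_J$ and $v_A\in W_A$, and noting that multiplying $v$ on the right by any non-identity element of $W_A\subset W_{J'\sqcup A}$ strictly shortens $v$ (since lengths add in the direct product), I would conclude
\[W_{J\sqcup A}\cap W^{J'\sqcup A}=W_J\cap W^{J'}\,.\]
The same decomposition also lets me choose the set of left coset representatives of $W_{J\sqcup A}/W_{J'\sqcup A}$ to be exactly $W_{J/J'}$, using independence of $Y_{J/J'}$ on this choice recorded right after \eqref{eq:relpush}.

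For the product $x_{J/J'}$, I would use that non-adjacent simple roots are orthogonal in the Cartan matrix, so the root subsystem splits as $\Sigma_{J\sqcup A}^\pm=\Sigma_J^\pm\sqcup\Sigma_A^\pm$, and subtracting the analogous statement for $J'\sqcup A$ gives $\Sigma_{J\sqcup A}^-\setminus\Sigma_{J'\sqcup A}^-=\Sigma_J^-\setminus\Sigma_{J'}^-$, whence $x_{J\sqcup A/J'\sqcup A}=x_{J/J'}$. Plugging these three facts (matching leading scalar, matching index set, matching denominator) into \eqref{defgjjp} and \eqref{eq:relpush} shows that the two sides agree term-by-term, yielding both claimed equalities.

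The only delicate step is verifying that the direct product factorization of $W_{J\sqcup A}$ is compatible with the Bruhat-minimal-length condition defining $W^{J'\sqcup A}$: this is precisely where the gap hypothesis $A\cap[a-1,b+1]=\emptyset$ is used, since without it the length additivity and the $w_A$-commutation used in the computation of $w_{J\sqcup A/J'\sqcup A}$ would both fail. Everything else is routine bookkeeping with the definitions.
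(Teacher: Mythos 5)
Your proof is correct and follows essentially the same route as the paper: observe that the non-adjacency of $A$ to $J$ forces the direct product decompositions $W_{J\sqcup A}=W_J\times W_A$, $\Sigma_{J\sqcup A}^-=\Sigma_J^-\sqcup\Sigma_A^-$ (and likewise for $J'$), deduce $w_{J\sqcup A/J'\sqcup A}=w_{J/J'}$ and $x_{J\sqcup A/J'\sqcup A}=x_{J/J'}$, identify the index sets/coset representatives, and plug into the definitions. You simply spell out in a bit more detail the length-additivity argument for $W_{J\sqcup A}\cap W^{J'\sqcup A}=W_J\cap W^{J'}$, which the paper abbreviates as ``$W_J/W_{J'}$ is in a natural bijection with $W_{J\sqcup A}/W_{J'\sqcup A}$.''
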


\begin{proof}
As the sets of simple roots corresponding to $J$ and $A$ are orthogonal to each other, we have $\Sigma_{J\sqcup A}^-=\Sigma_J^-\sqcup\Sigma_A^-$, $W_{J\sqcup A}=W_J\times W_A$, and similarly for $J$ replaced by $J'$. Therefore, we have
\begin{equation}\label{wjjp}w_{J/J'}:=w_Jw_{J'}=w_Jw_Aw_{J'}w_A=:w_{J\sqcup A/J'\sqcup A}\,,\;\;\;\;\;x_{J/J'}=x_{J\sqcup A/J'\sqcup A}\,,\end{equation}
and $W_J/W_{J'}$ is in a natural bijection with $W_{J\sqcup A}/W_{J'\sqcup A}$. The stated equalities follow by plugging these facts into~\eqref{defgjjp} and the definition~\eqref{eq:relpush} of the relative push-pull operator. 
\end{proof}

Denote $\hat x_{\Pi}:=\prod_{\al<0}(t-t^{-1}e^{\al})$. We define an anti-involution 
\[\hiota:Q_{m, W}\to Q_{m,W}\,, \quad  \hiota(z_1z_2):=\hiota(z_2)\hiota(z_1) \;\,\text{ and }\;\,\hiota(p\de^m_w):=\de^m_{w^{-1}}p\frac{w(\hat x_{\Pi}x_{\Pi})}{\hat x_{\Pi}x_\Pi}\,. \]
Note that this is different from the anti-involution $\iota$ in \eqref{eq:inv1} or the involution $\overline{\{\cdot\}}$ in \eqref{eq:inv2}. 

\begin{lem}\label{use-i} 
{\rm (1)} In $Q_{m,W}$, we have $\hiota(\tau_i)=\tau_i$ and $\hiota(\gamma_w)=\gamma_{w^{-1}}$.

{\rm (2)} Given $J'\subset J\subseteq \Pi$, we have 
\begin{equation*}\gamma_{J}=\gamma_{J'}\,\hiota(\gamma_{J/J'})\in Q_{m,W}\,,\;\;\;\;\;Y_{J}=Y_{J'}\,\iota(Y_{J/J'})\in \bfD_t\,.\end{equation*}
\end{lem}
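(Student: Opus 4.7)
The first claim $\hiota(\tau_i)=\tau_i$ follows by a direct computation. Since $\hiota(p)=p$ for all $p\in S_m$ (apply the definition with $w=e$), it suffices to analyze the $\delta_i^m$-term of $\tau_i$. Writing $q:=(t-t^{-1}e^{-\alpha_i})/(1-e^{-\alpha_i})$ and moving $\delta_{s_i^{-1}}^m=\delta_i^m$ to the right via the commutation $\delta_i^m\cdot f=s_i(f)\,\delta_i^m$, the definition of $\hiota$ yields
\[
\hiota(q\,\delta_i^m)\;=\;s_i(q)\cdot\frac{\hat x_\Pi x_\Pi}{s_i(\hat x_\Pi x_\Pi)}\,\delta_i^m\,.
\]
Since $s_i$ permutes $\Sigma^-$ except for swapping $-\alpha_i$ with $\alpha_i$, a direct calculation gives
\[
\frac{s_i(\hat x_\Pi x_\Pi)}{\hat x_\Pi x_\Pi}\;=\;\frac{t-t^{-1}e^{\alpha_i}}{t-t^{-1}e^{-\alpha_i}}\cdot\frac{1-e^{-\alpha_i}}{1-e^{\alpha_i}}\,,
\]
and this matches $s_i(q)/q$ exactly, so $\hiota(q\delta_i^m)=q\delta_i^m$. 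For the second claim, since $\hiota$ is anti-multiplicative and fixes each $\tau_i$, for any reduced expression of $v$ we have $\hiota(\tau_v)=\tau_{v^{-1}}$. Applying this termwise to $\gamma_w=\sum_{v\le w}t_w t_v^{-1}P_{v,w}(t^{-2})\,\tau_v$ and invoking the standard symmetries $P_{v,w}=P_{v^{-1},w^{-1}}$, $\ell(v)=\ell(v^{-1})$, and $v\le w\iff v^{-1}\le w^{-1}$, the substitution $u=v^{-1}$ reassembles exactly $\gamma_{w^{-1}}$.

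\textbf{Plan for part (2).} For the $\gamma$-identity, apply the anti-involution $\hiota$ to equation~\eqref{mult-gamma}, obtaining $\hiota(\gamma_J)=\hiota(\gamma_{J'})\,\hiota(\gamma_{J/J'})$. Since the longest elements $w_J$ and $w_{J'}$ of the parabolic subgroups are involutions, part~(1) gives $\hiota(\gamma_J)=\gamma_{w_J^{-1}}=\gamma_J$ and similarly for $J'$, yielding $\gamma_J=\gamma_{J'}\,\hiota(\gamma_{J/J'})$. For the $Y$-identity, analogously apply the anti-involution $\iota$ of~\eqref{eq:inv1} to equation~\eqref{mult-y}, obtaining $\iota(Y_J)=\iota(Y_{J'})\,\iota(Y_{J/J'})$; the identity then follows provided $\iota(Y_J)=Y_J$ and $\iota(Y_{J'})=Y_{J'}$. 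Expanding $Y_J=\sum_{w\in W_J}w(x_J)^{-1}\,\delta_w$, plugging in $\iota(p\delta_v)=\delta_{v^{-1}}p\,v(x_\Pi)/x_\Pi$, and reindexing by $u=w^{-1}$, the desired $\iota$-invariance reduces to the identity
\[
\frac{v(x_J)}{x_J}\;=\;\frac{v(x_\Pi)}{x_\Pi}\qquad\text{for all } v\in W_J\,.
\]
This holds because each simple reflection $s_i$ with $i\in J$ preserves the coefficient of $\alpha_j$ for $j\notin J$ when acting on roots, so $\Sigma^-\setminus\Sigma_J^-$ is $W_J$-stable; hence the complementary product $x_\Pi/x_J=\prod_{\alpha\in\Sigma^-\setminus\Sigma_J^-}x_\alpha$ is $W_J$-invariant, which is exactly the stated identity.

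\textbf{Main obstacle.} The most delicate computational step is the verification $\hiota(\tau_i)=\tau_i$ in part~(1): the extra factor $\hat x_\Pi$ in the definition of $\hiota$ (which distinguishes it from the anti-involution $\iota$) is precisely what is needed so that the $s_i$-twist cancels the asymmetry in the coefficient of $\delta_i^m$ in $\tau_i$. For part~(2), the only real content beyond invoking anti-multiplicativity is the identity $\iota(Y_J)=Y_J$, which in turn hinges on the standard $W_J$-invariance of the complementary root product $x_\Pi/x_J$.
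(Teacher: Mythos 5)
Your proof is correct and follows essentially the same route as the paper: for part~(1), a direct check that the $\hat{x}_\Pi$-twist in $\hiota$ exactly cancels the $s_i$-asymmetry of the coefficient of $\delta_i^m$ in $\tau_i$, then anti-multiplicativity together with $P_{v,w}=P_{v^{-1},w^{-1}}$ for $\gamma_w$; for part~(2), apply the anti-involution to $\gamma_J=\gamma_{J/J'}\gamma_{J'}$ (resp.\ $Y_J=Y_{J/J'}Y_{J'}$) and use the self-invariance of $\gamma_J$ (resp.\ $Y_J$). The one place you deviate, in a useful way, is the step $\iota(Y_J)=Y_J$: the paper points to the word-level identity $\iota(Y_I)=Y_{I^{-1}}$, but for the hyperbolic formal group law the parabolic push-pull element $Y_J$ is not literally of the form $Y_I$ for a reduced word $I$ (braid relations fail there), so that reference is somewhat indirect. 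Your direct verification, reducing $\iota(Y_J)=Y_J$ to the $W_J$-invariance of $x_\Pi/x_J=\prod_{\alpha\in\Sigma^-\setminus\Sigma_J^-}x_\alpha$, is clean, self-contained, and makes the argument airtight where the paper is terse.
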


\begin{proof}
(1) The first property follows from direct computation, and the second one follows from the fact that $P_{v,w}=P_{v^{-1}, w^{-1}}$. 

(2) By the first part, we have $\hiota(\gamma_J)=\gamma_J$. Based on this fact and \eqref{mult-gamma}, we have 
\[\gamma_J=\hiota(\gamma_J)=\hiota(\gamma_{J/J'}\,\gamma_{J'})=\hiota(\gamma_{J'})\,\hiota(\gamma_{J/J'})=\gamma_{J'}\,\hiota(\gamma_{J/J'})\,.\]
The similar property for $Y_J$ follows in the same way from~\eqref{invar-yj} and~\eqref{mult-y}.
\end{proof}

\begin{lem}\label{j-to-k} {\rm (1)} We have
\[K_1=J_1\supsetneq K'_1=J'_1\subsetneq K_2\supsetneq K'_2\subsetneq\ldots\subsetneq K_r\supsetneq K'_r\subseteq J\,.\]

{\rm (2)} For every $i=1,\ldots,r$, we have
\begin{equation*}\gamma_{J_i/J'_i}=\gamma_{K_i/K'_i}\in Q_{m,W}\,,\;\;\;\;\;Y_{J_i/J'_i}=Y_{K_i/K'_i}\in \bfD_t\,.\end{equation*}
\end{lem}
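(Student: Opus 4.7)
I plan to prove the lemma by reducing both equalities in Part~(2) to Lemma~\ref{PQ}, and handling Part~(1) by direct combinatorial analysis. For Part~(2), I would apply Lemma~\ref{PQ} with $J=J_i$, $J'=J'_i$, and $A=K_i\setminus J_i=K'_i\setminus J'_i$. First, $J_i=C_i\sqcup D'_i$ is the consecutive interval $[c_i,d_i-1]$: the labels on $R_i$'s left side are $C_i=[c_i,\max C_i]$ and on its top side are $D_i=[\max C_i+1,d_i]$ (these two blocks meet at the NW vertex of $R_i$, where the label increments are consecutive), so removing $d_i$ from $C_i\cup D_i$ yields exactly $[c_i,d_i-1]$. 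With the choice $a=c_i$, $b=d_i-1$, the hypothesis of Lemma~\ref{PQ} becomes $A\subseteq\Pi\setminus[c_i-1,d_i]$, i.e., every element of $A$ is at most $c_i-2$ or at least $d_i+1$.

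Verifying this requires the geometric fact that, on a shared left segment of a component of $\rho^i$, rectangles are stacked with smaller indices below, and on a shared top segment they are arranged with smaller indices to the northeast. Granting this, when $R_j$ (with $j<i$) shares a left segment with $R_i$, we have $\max C_j=c_i-1$, so all of $C'_j$ lies at or below $c_i-2$; symmetrically when $R_k$ (with $k<i$) shares a top segment with $R_i$, $\min D_k=d_i+1$, so $D'_k$ lies at or above $d_i+1$. Rectangles in different components of $\rho^i$ are even more strongly separated since the components are pointwise disjoint. Combining these, $A$ avoids the interval $[c_i-1,d_i]$, and Lemma~\ref{PQ} delivers both $\gamma_{J_i/J'_i}=\gamma_{K_i/K'_i}$ and $Y_{J_i/J'_i}=Y_{K_i/K'_i}$ in one stroke.

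For Part~(1), the strict containments $K_i\supsetneq K'_i$ follow from $K_i=K'_i\sqcup\{\max C_i\}$. The alternating containments $K'_i\subsetneq K_{i+1}$ I would verify by case analysis on how $R_{i+1}$ attaches to $\rho^i$ (as a new component, as an extension of a single component, or as a merger of adjacent components); in each case every index $j\in\mathcal{C}_i$ either persists in $\mathcal{C}_{i+1}$ or its $C'_j$-contribution is already captured in $K_{i+1}$, and similarly for $\mathcal{D}$. Finally, $K'_r\subseteq J$ holds because the unique index $d$ excluded from $J$ is always a $\max$ label rather than a contribution to any $C'_j$ or $D'_k$. The main obstacle will be the spatial/index monotonicity used in Part~(2); this should follow by induction on the Zelevinsky construction, tracking how the outer-corner constraints $b_i\le a_i$ and $a_{i+1}\le b_{i+1}$ govern the relative placement of consecutively carved rectangles.
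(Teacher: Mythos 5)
Your reduction of Part~(2) to Lemma~\ref{PQ} with $J=J_i$, $J'=J_i'$, $A=K_i\setminus J_i=K_i'\setminus J_i'$, together with the observation that $J_i=[c_i,d_i-1]$, is exactly the paper's argument, and the case analysis you propose for Part~(1) (new component / extension / merger) is precisely the one the paper carries out with its three attachment diagrams. The one piece you leave open is the verification that $A$ avoids $[c_i-1,d_i]$; you reduce this to an ``index monotonicity'' claim (smaller indices below on a shared left boundary, smaller indices to the northeast on a shared top boundary, with different components yielding disjoint label intervals), which is indeed the content needed, and which the paper establishes by directly examining the three ways $R_i$ can attach to $\rho^{i-1}$: it shows that the rectangle boundary labels $B=\{c_i,\dots,d_i\}$ satisfy $K_i\cap B=J_i$, $K_i\setminus B=K'_{i-1}\setminus B$, $d_i\notin K'_{i-1}$, and $c_i-1\notin K_i$. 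So the idea is the same; what you would still need to supply is that local analysis of the attachment configurations (or a proof of your monotonicity claim), since the constraints $b_i\le a_i$, $a_{i+1}\le b_{i+1}$ are used there only implicitly through the fact that the removed rectangle strictly overtops (resp.\ out-widths) the adjacent component to its right (resp.\ below). Also note that the strictness $K'_{i-1}\subsetneq K_i$ requires exhibiting an explicit element of $K_i\setminus K'_{i-1}$; the paper uses $c_i+q_i-1$.
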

\begin{proof} It is clear that $K'_r\subseteq J$. Thus, in order to complete the first part, we need to prove $K'_{i-1}\subsetneq K_i$, for $i=2,\ldots,r$. This is obvious if the rectangle $R_i$ is, by itself, a connected component of the shape $\rho^i$. 
Other than this, there are three ways in which $R_i$ can be attached to $\rho^{i-1}$, which are indicated below; the boxes of $R_i$ are marked with $\star$, and the empty boxes form the relevant component(s) of $\rho^{i-1}$. 
\begin{equation}\label{attach-R}\tableau{\star&\star&{}&{}&{}&{}\\\star&\star&{}&{}&{}\\\star&\star&{}\\\star&\star\\\star&\star}\qquad\;\;
\tableau{\star&\star&\star&\star\\\star&\star&\star&\star\\{}&{}&{}\\{}&{}\\{}}\;\;\qquad
\tableau{\star&\star&\star&\star&{}&{}\\\star&\star&\star&\star&{}\\\star&\star&\star&\star\\{}&{}&{}\\{}&{}}\end{equation}
Note that the height (respectively width) of $R_i$ is strictly greater than the number of rows (respectively columns) of the relevant Young diagram to its right (respectively at the bottom).  It is also useful to observe that all unit segments with the same label form a northwest to southeast staircase shape, and the labels increase by 1 as we move northeast. 

Let $B$ denote the set of labels on the boundary of the rectangle $R_i$. 
Using the above notation, in all three cases in~\eqref{attach-R}, we have 
\[B=C_i\sqcup D_i=\{c_i,\ldots,d_i\}\,,\;\;\;K_i\setminus B=K'_{i-1}\setminus B,\,\;\;\;K_i\cap B=C_i\sqcup D'_i=B\setminus\{d_i\}=:J_i\,.\]
On another hand, we have $d_i\not\in K'_{i-1}$; indeed, in the first and last case in~\eqref{attach-R}, the label $d_i$ is on the left side of a rectangle $R_j$ with $j\in{\mathcal C}_{i-1}$, but $d_i\not\in C_j'$, because it is the top label on the mentioned side. We conclude that $K'_{i-1}\subseteq K_i$. In fact, the inclusion is strict because we also have $c_i+q_i-1\in (K_i\cap B)\setminus K'_{i-1}$.

For the second part, we note that, in addition to the above facts, we have $K'_i\cap B=C'_i\sqcup D'_i=:J'_i$ and $c_i-1\not \in K_i$. For the latter part, note that, in the last two cases in~\eqref{attach-R}, the label $c_i-1$ is on the left side of a rectangle $R_j$ with $j\in{\mathcal C}_{i}$ and $j\ne i$, but $c_i-1\not\in C_j'$, because it is the top label on the mentioned side. The proof is concluded by applying Lemma~\ref{PQ}.
\end{proof}

\begin{proof}[Proof of Theorem~{\rm \ref{kl-via-gamma}}]
Using the above setup, we have
\begin{align}\label{use-i-rep}
\gamma_{K_2/K'_2}\ldots\gamma_{K_r/K'_r}\gamma_J&\,\overset{\sharp_1}=\,\gamma_{K_2/K'_2}\ldots\gamma_{K_r/K'_r}\gamma_{K'_r}\,\hiota(\gamma_{J/K'_r})\\
&\,\overset{\sharp_2}=\,\gamma_{K_2/K'_2}\ldots\gamma_{K_r}\,\hiota(\gamma_{J/K'_r})\nonumber\\
&\,\overset{\sharp_3}=\,\gamma_{K_2/K'_2}\ldots\gamma_{K'_{r-1}}\,\hiota(\gamma_{K_r/K'_{r-1}})\,\hiota(\gamma_{J/K'_r})\nonumber\\
&\,=\,\ldots\,\overset{\sharp_4}=\,\gamma_{K'_1}\,\hiota(\gamma_{K_2/K'_1})\ldots\hiota(\gamma_{K_r/K'_{r-1}})\,\hiota(\gamma_{J/K'_r})\,.\nonumber
\end{align}
Here $\sharp_1,\,\sharp_3$ are based on Lemma~\ref{j-to-k}~(1) and Lemma~\ref{use-i}~(2), $\sharp_2$ on~\eqref{mult-gamma}, and $\sharp_4$ on the repeated use of an argument similar to $\sharp_2$ followed by $\sharp_3$. 

We now prove the theorem using induction on $r$, with base case $r=0$, which is trivial. We have
\begin{align*}
\gamma_{w_\lambda w_J}&\,\overset{\sharp_1}=\,Z_{w_\lambda}\gamma_J\overset{\sharp_2}=Z_{v^1}Z_{w^2}\gamma_J\overset{\sharp_3}=Z_{v^1}\gamma_{w^2w_J}\\
&\,\overset{\sharp_4}=\,Z_{v^1}\,\gamma_{J_2/J'_2}\ldots\gamma_{J_r/J'_r}\gamma_J\,\overset{\sharp_5}=\,Z_{v^1}\, \gamma_{K_2/K'_2}\ldots\gamma_{K_r/K'_r}\gamma_J\\
&\,\overset{\sharp_6}=\,Z_{v^1}\, \gamma_{K'_1}\,\hiota(\gamma_{K_2/K'_1})\ldots\hiota(\gamma_{K_r/K'_{r-1}})\,\hiota(\gamma_{J/K'_r})\\
&\,\overset{\sharp_7}=\,\gamma_{K_1}\,\hiota(\gamma_{K_2/K'_1})\ldots\hiota(\gamma_{K_r/K'_{r-1}})\,\hiota(\gamma_{J/K'_r})\\
&\,\overset{\sharp_8}=\,\gamma_{K_1/K'_1}\gamma_{K'_1}\,\hiota(\gamma_{K_2/K'_1})\ldots\hiota(\gamma_{K_r/K'_{r-1}})\,\hiota(\gamma_{J/K'_r})\\
&\,\overset{\sharp_9}=\,\gamma_{K_1/K'_1}\gamma_{K_2/K'_2}\ldots\gamma_{K_r/K'_r}\gamma_J\,\overset{\sharp_{10}}=\,\gamma_{J_1/J'_1}\gamma_{J_2/J'_2}\ldots\gamma_{J_r/J'_r}\gamma_J\,.
\end{align*}
Here $\sharp_1,\,\sharp_3,\,\sharp_7$ are based on Theorem~\ref{res-kl}, $\sharp_2$ on~\eqref{factz}, $\sharp_4$ on the induction hypothesis, $\sharp_5,\,\sharp_{10}$ on Lemma~\ref{j-to-k}~(2), $\sharp_6,\,\sharp_{9}$ on~\eqref{use-i-rep}, and $\sharp_8$ on~\eqref{mult-gamma}; additionally, in $\sharp_7$ we use the fact that
\[K_1=J_1=C_1\sqcup D'_1=\{c_1,\ldots,d_1-1\}\,,\;\;\;\;{K}_1'={J}_1'={C}_1'\sqcup D'_1=K_1\setminus\{\max\,C_1\}\,,\]
and thus we have $v^1 w_{K'_1}=w_{K_1}$.
\end{proof}

\begin{rema}{\rm We could not have carried out the above proof by using only one of the pairs $(J_i,J_i')$ and $(K_i,K_i')$. Indeed, the first pair does not satisfy the property in Lemma~\ref{j-to-k}~(1), which is crucial in the proof. On the other hand, the induction procedure cannot be applied based on the second pair because the respective sets for $\lambda^1=\lambda$ and $\lambda^2$ (corresponding to $w^2$) are different.
}\end{rema}

In order to relate Theorem~\ref{kl-via-gamma} to the Zelevinsky resolution, and more specifically to the operator~\eqref{push-pull}, we need the following result.

\begin{lem}\label{JPQ} For every $i=1,\ldots,r$, we have
\[Y_{J_i/J'_i}=Y_{K_i/K'_i}=Y_{P_i/Q_i}\,.\]
\end{lem}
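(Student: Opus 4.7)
By Lemma~\ref{j-to-k}(2) we already have $Y_{J_i/J_i'}=Y_{K_i/K_i'}$, so it remains only to prove $Y_{K_i/K_i'}=Y_{P_i/Q_i}$. My plan is to produce a subset $A_i\subseteq\Pi$ that is disjoint from $K_i$ together with a unit-interval neighborhood of each connected component of $K_i$, and which satisfies
\[P_i=K_i\sqcup A_i,\qquad Q_i=K_i'\sqcup A_i.\]
Once this is established, Lemma~\ref{PQ} applies to each connected component of $K_i$ separately (peeling off the corresponding part of $A_i$ one component at a time), and the identity $Y_{K_i/K_i'}=Y_{P_i/Q_i}$ follows at once.

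To set this up I would first describe $P_i$ and $Q_i$ explicitly as subsets of $\Pi$. From the discussion around~\eqref{2row}, the stabilizer $P_i=P_{w^i}$ corresponds to $\Pi\setminus\{k_1^{(i)},\ldots\}$, where the $k_j^{(i)}$ label the outer corners of the lattice path of $\lambda^i$---equivalently, the positions $k\in\Pi$ at which consecutive unit steps of that path change direction. Since $Q_i=P_i\cap P_{i+1}$, the set $\Pi\setminus Q_i$ is the union of the corner labels of $\lambda^i$ and of $\lambda^{i+1}=\lambda^i\setminus R_i$. The two paths differ only in a local region around $R_i$, so the labels in the symmetric difference of their corner sets can be read off from the edges of $R_i$ and the adjacent rectangles in $\rho^{i-1}$.

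With these explicit descriptions in hand, I would verify the decomposition $P_i=K_i\sqcup A_i$, $Q_i=K_i'\sqcup A_i$ by running through the three attachment cases depicted in~\eqref{attach-R}, together with the trivial case where $R_i$ is itself a connected component of $\rho^i$. In each case one sees that $K_i$ coincides with the set of corner labels of $\lambda^i$ contributed by the boundary of the component of $\rho^i$ containing $R_i$---with $\max C_i$ as the extra element of $K_i\setminus K_i'$---and that $A_i:=P_i\setminus K_i=Q_i\setminus K_i'$ sits outside the closed unit-neighborhood of $K_i$. The main obstacle is this combinatorial bookkeeping: corners of $\lambda^i$ arise from two sources, namely corners of $\lambda$ itself that survive in $\lambda^i$, and new corners carved out by the left or top edges of components of $\rho^{i-1}$, and these have to be matched carefully with the sets $C_j, D_j$ indexed by $\mathcal{C}_i\cup\mathcal{D}_i$ that define $K_i$ and $K_i'$.
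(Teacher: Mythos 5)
Your first step (invoking Lemma~\ref{j-to-k}(2) to get $Y_{J_i/J_i'}=Y_{K_i/K_i'}$) is the same as the paper's. The second step takes a genuinely different route, and the paper's is considerably cleaner. Rather than targeting $Y_{K_i/K_i'}=Y_{P_i/Q_i}$ directly for all $i$, the paper reduces to proving $Y_{J_i/J_i'}=Y_{P_i/Q_i}$, and further reduces to $i=1$: the data $P_i,\,Q_i,\,J_i,\,J_i'$ attached to $\lambda$ are the data $P_1,\,Q_1,\,J_1,\,J_1'$ attached to the smaller partition $\lambda^i$. This is a real simplification because $J_1=C_1\sqcup D_1'=\{c_1,\ldots,d_1-1\}$ is a single interval, whereas your $K_i$ is in general a disjoint union of several intervals. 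With $B=\{c_1,\ldots,d_1\}$ the boundary labels of $R_1$, the paper then checks $P_1\cap B=J_1$, $Q_1\cap B=J_1'$, $P_1\setminus B=Q_1\setminus B$, and $c_1-1,\,d_1\notin P_1$, so a single application of Lemma~\ref{PQ} with $A=P_1\setminus B$ finishes the argument.

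Your plan to exhibit $P_i=K_i\sqcup A_i$, $Q_i=K_i'\sqcup A_i$ with $A_i$ orthogonal to $K_i$ is consistent with the worked example and plausibly provable, but it commits you to exactly the bookkeeping you flag as the obstacle: matching the outer corners of each $\lambda^i$ with the sets $C_j,\,D_j$ for all $j\in\mathcal{C}_i\cup\mathcal{D}_i$. Moreover, your ``component-by-component'' use of Lemma~\ref{PQ} needs a bit more care as phrased: that lemma adjoins a single orthogonal block $A$ to both members of a pair $(J,J')$, so what one really wants to observe is that the single element $K_i\setminus K_i'=\{\max C_i\}$ lies in a unique connected component of $K_i$ --- which is precisely $J_i$, by $K_i\cap B=J_i$ and $c_i-1,\,d_i\notin K_i$ --- and that all other components of $K_i$ may therefore be absorbed into the orthogonal set $A$. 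Once you see that only the $J_i$-interval matters, you have essentially rediscovered the paper's reduction, and the reduction to $i=1$ removes the remaining dependence on $\rho^{i-1}$. A minor slip in your description of $P_i$: $\Pi\setminus P_i$ consists of the labels of the final steps in the maximal vertical runs of the lattice path of $\lambda^i$ (the vertical-to-horizontal transitions), not of all positions at which the path changes direction.
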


\begin{proof} By using Lemma~\ref{j-to-k}~(2), it suffices to prove $Y_{J_i/J'_i}=Y_{P_i/Q_i}$. Moreover, it suffices to consider  $i=1$, as we can just replace the partition $\lambda^1=\lambda$ with $\lambda^i$. Recall that $P_1$ is obtained by considering the lattice path from $(0,0)$ to $(n-d,d)$ defining the southeast boundary of $\lambda^1$, and by excluding from $\Pi$ the last label in each sequence of vertical steps. Similarly, $P_2$ corresponds to $\lambda^2:=\lambda\setminus R_1$. 

Let $B$ denote the set of labels on the boundary of the rectangle $R_1$; see the diagram below, where the boxes of $R_1$ are marked with $\star$.
\[\tableau{{}&{}&{}&{}&{}&{}&{}\\{}&{}&{}&{}&{}&{}&{}\\{}&\star&\star&\star\\{}&\star&\star&\star\\{}\\{}\\{}}\]
 Using the above notation, we have $B=C_1\sqcup D_1=\{c_1,\ldots,d_1\}$. Based on the above interpretation of $P_1$ and $P_2$, we deduce
\begin{align*}
&P_1\cap B=C_1\sqcup D'_1=:J_1=B\setminus\{d_1\},\;\;\;\;P_2\cap B={C}_1'\sqcup D_1\;\;\Longrightarrow\;\;Q_1\cap B=C'_1\sqcup D'_1=:J'_1,\\
&P_1\setminus B\subset P_2\setminus B\;\;\Longrightarrow\;\;P_1\setminus B=Q_1\setminus B\,.
\end{align*}
Moreover, we have $c_1-1\not\in P_1$ and $d_1\not\in P_1$. Thus, we are under the hypotheses of Lemma~\ref{PQ}, so the conclusion follows.
\end{proof}

We now rephrase Theorem~\ref{kl-via-gamma} as follows, via the map $\psi$.

\begin{cor}\label{kl-via-gamma-new}
We have
\begin{equation}\label{kl-fact}\mu^{-\ell(w_\lambda w_J)}\psi(\gamma_{w_\lambda w_J})=Y_{P_1/Q_1}\ldots Y_{P_r/Q_r}Y_J\in \bfD_t\,.\end{equation}
\end{cor}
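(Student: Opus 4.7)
The strategy is to apply $\psi$ to the identity in Theorem~\ref{kl-via-gamma} and invoke~\eqref{eq:gammapsirel} iteratively via a backward induction exploiting the zig-zag chain $K'_{i-1}\subsetneq K_i$ from Lemma~\ref{j-to-k}(1). Setting $e_i:=\ell(w_{K_i/K'_i})=p_iq_i$, we have $\ell(w_\lambda w_J)=\ell(w_\lambda)+\ell(w_J)=\sum_{i=1}^r e_i+\ell(w_J)$, since $w_\lambda\in W^J$ and $|\lambda|=\sum_i p_iq_i$. Applying $\psi$ to $\gamma_{w_\lambda w_J}=\gamma_{K_1/K'_1}\cdots\gamma_{K_r/K'_r}\gamma_J$ and identifying $Y_{K_i/K'_i}=Y_{P_i/Q_i}$ via Lemma~\ref{JPQ}, the claim reduces to
\[\mu^{-\sum_{k=1}^re_k-\ell(w_J)}\psi(\gamma_{K_1/K'_1})\cdots\psi(\gamma_{K_r/K'_r})\psi(\gamma_J)=Y_{K_1/K'_1}\cdots Y_{K_r/K'_r}\,Y_J.\]

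For $1\le i\le r$ set $T_i:=Y_{K_i/K'_i}\cdots Y_{K_r/K'_r}Y_J$ and $T_{r+1}:=Y_J$. By iterating Lemma~\ref{use-i}(2)—i.e. $Y_{K_j/K'_j}Y_{K'_j}=Y_{K_j}$ and $Y_{K_j}=Y_{K'_{j-1}}\iota(Y_{K_j/K'_{j-1}})$—together with $K'_r\subseteq J$ and $K'_{j-1}\subsetneq K_j$ from Lemma~\ref{j-to-k}(1), one obtains
\[T_{r+1}=Y_{K'_r}\iota(Y_{J/K'_r}),\qquad T_i=Y_{K'_{i-1}}\iota(Y_{K_i/K'_{i-1}})\iota(Y_{K_{i+1}/K'_i})\cdots\iota(Y_{J/K'_r})\quad(2\le i\le r).\]
In particular, each such $T_i$ factors as $T_i=Y_{K'_{i-1}}\,S_i$ with $Y_{K'_{i-1}}$ as its left-most factor.

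One then proves by backward induction on $i$ from $r+1$ down to $1$ the identity
\[\mu^{-\sum_{k\ge i}e_k-\ell(w_J)}\psi(\gamma_{K_i/K'_i})\cdots\psi(\gamma_{K_r/K'_r})\psi(\gamma_J)=T_i.\]
The base case $i=r+1$ is \eqref{eq:gammapsirel} with $J'=\emptyset$, which gives $\mu^{-\ell(w_J)}\psi(\gamma_J)=Y_J$. For the step from $i$ to $i-1$, applying \eqref{eq:gammapsirel} at $(J,J')=(K_{i-1},K'_{i-1})$ yields
\[\mu^{-e_{i-1}}\psi(\gamma_{K_{i-1}/K'_{i-1}})\,Y_{K'_{i-1}}=Y_{K_{i-1}}=Y_{K_{i-1}/K'_{i-1}}\,Y_{K'_{i-1}};\]
right-multiplying by $S_i$ and using $T_i=Y_{K'_{i-1}}S_i$ gives $\mu^{-e_{i-1}}\psi(\gamma_{K_{i-1}/K'_{i-1}})\,T_i=Y_{K_{i-1}/K'_{i-1}}\,T_i=T_{i-1}$, as required. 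The instance $i=1$ is the desired identity.

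The main obstacle is precisely the factorization $T_i=Y_{K'_{i-1}}S_i$. The naive analogue of \eqref{eq:gammapsirel}, namely $\mu^{-e_i}\psi(\gamma_{K_i/K'_i})=Y_{K_i/K'_i}$ in $\bfD_t$, is generally false; \eqref{eq:gammapsirel} only lets $\mu^{-e_i}\psi(\gamma_{K_i/K'_i})$ and $Y_{K_i/K'_i}$ agree after right-multiplication by $Y_{K'_i}$. The zig-zag chain from Lemma~\ref{j-to-k}(1), combined with the identity $Y_J=Y_{J'}\iota(Y_{J/J'})$ of Lemma~\ref{use-i}(2), is exactly what manufactures the required left factor $Y_{K'_{i-1}}$ of $T_i$ and makes the telescoping go through.
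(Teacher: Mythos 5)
Your proof is correct and follows essentially the same route as the paper's: both rely on Theorem~\ref{kl-via-gamma}, the length computation $\ell(w_\lambda w_J)=\sum_i p_iq_i+\ell(w_J)$ via~\eqref{wkkp}, the zig-zag chain of Lemma~\ref{j-to-k}~(1), the reorderings of Lemma~\ref{use-i}~(2) and~\eqref{mult-y}, and repeated application of~\eqref{eq:gammapsirel}, finishing with Lemma~\ref{JPQ}. The only difference is organizational: the paper telescopes forward to the intermediate form $Y_{K_1}\iota(Y_{K_2/K'_1})\cdots\iota(Y_{J/K'_r})$ and then telescopes back, while you package the same manipulations as a single backward induction with the invariant $T_i=Y_{K'_{i-1}}S_i$, which is somewhat cleaner but not a substantively different argument.
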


\begin{proof} We start by observing the following:
\begin{equation}\label{wkkp}
w_{K_i/K_i'}=w_{J_i/J_i'}=v^i\;\;\;\;\Longrightarrow\;\;\;\;\ell(w_{K_i/K_i'})=p_iq_i=|R_i|\,,
\end{equation}
where $|R_i|$ denotes the number of boxes of the rectangle $R_i$. Here the first equality is based on~\eqref{wjjp} and the fact that this result can be applied to the pairs $(J_i,J_i')$ and $(K_i,K_i')$, as discussed in the proof of Lemma~\ref{j-to-k}; the second equality is clear by the definition of $v^i$.

We now apply $\mu^{-\ell(w_\lambda w_J)}\psi(\,\cdot\,)$ to the first and last part of~\eqref{X-gamma}. After doing this, the latter can be written as follows:
\begin{align*}
\;\;\;\;\;\;\;&\;\;\;\!\mu^{-\ell(w_\lambda w_J)}\psi(\gamma_{K_1/K'_1})\ldots\psi(\gamma_{K_r/K'_r})\psi(\gamma_{J})\\
\overset{\sharp_1}=&\left(\mu^{-\ell(w_{K_1/K_1'})}\psi(\gamma_{K_1/K'_1})\right)\ldots\left(\mu^{-\ell(w_{K_r/K_r'})}\psi(\gamma_{K_r/K'_r})\right)\left(\mu^{-\ell(w_J)}\psi(\gamma_{J})\right)\\
\overset{\sharp_2}=&\left(\mu^{-\ell(w_{K_1/K_1'})}\psi(\gamma_{K_1/K'_1})\right)\ldots\left(\mu^{-\ell(w_{K_r/K_r'})}\psi(\gamma_{K_r/K'_r})\right)Y_J\\
\overset{\sharp_3}=&\left(\mu^{-\ell(w_{K_1/K_1'})}\psi(\gamma_{K_1/K'_1})\right)\ldots\left(\mu^{-\ell(w_{K_r/K_r'})}\psi(\gamma_{K_r/K'_r})\right)Y_{K_r'}\iota(Y_{J/K_r'})\\
\overset{\sharp_4}=&\left(\mu^{-\ell(w_{K_1/K_1'})}\psi(\gamma_{K_1/K'_1})\right)\ldots Y_{K_r}\iota(Y_{J/K_r'})\\
=&\ldots\overset{\sharp_5}=Y_{K_1}\iota(Y_{K_2/K_1'})\ldots\iota(Y_{J/K_r'})\\
\overset{\sharp_6}=\,&\,Y_{K_1/K_1'}Y_{K_1'}\iota(Y_{K_2/K_1'})\ldots\iota(Y_{J/K_r'})\overset{\sharp_7}=Y_{K_1/K_1'}Y_{K_2}\ldots\iota(Y_{J/K_r'})\\
=&\ldots\overset{\sharp_8}=Y_{K_1/K_1'}\ldots Y_{K_r/K_r'}Y_J\overset{\sharp_9}=Y_{P_1/Q_1}\ldots Y_{P_r/Q_r}Y_J\,.
\end{align*}
Here $\sharp_1$ is based on~\eqref{wkkp} and the fact that $\ell(w_\lambda)=\sum_i |R_i|$, $\sharp_2,\,\sharp_4$ are based on~\eqref{eq:gammapsirel}, $\sharp_3,\,\sharp_7$ on Lemma~\ref{use-i}~(2), $\sharp_5$ on the repeated use of an argument similar to $\sharp_3$ followed by $\sharp_4$, $\sharp_6$ on~\eqref{mult-y}, $\sharp_8$ on the repeated use of an argument similar to $\sharp_6$ followed by $\sharp_7$, and $\sharp_9$ on Lemma~\ref{JPQ}.
\end{proof}

We now state the main result of this section.

\begin{thm}\label{kl-zel}
The KL-Schubert classes for the Grassmannian coincide with the hyperbolic cohomology classes of the corresponding Zelevinsky resolutions.
\end{thm}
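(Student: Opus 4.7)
The plan is to deduce Theorem~\ref{kl-zel} directly from Corollary~\ref{kl-via-gamma-new}, which provides the key algebraic identity in $\bfD_t$:
\[
\mu^{-\ell(w_\lambda w_J)}\psi(\gamma_{w_\lambda w_J}) \;=\; Y_{P_1/Q_1}\ldots Y_{P_r/Q_r}Y_J.
\]
The remaining task is to translate this equality of operators into the desired coincidence of cohomology classes on $G/P_J$.

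First, I apply the $\odot$-action of both sides to the point class $\pt^t_e$. By Definition~\ref{def:KL}, the left-hand side becomes the KL-Schubert class $\KL_{w_\lambda w_J} \in \fh_T(G/B)$. Using the identity $\pi_J^*(\KL^J_{w_\lambda}) = \KL_{w_\lambda w_J}$ recalled in the paragraph before Corollary~\ref{cor:mainP}, this realizes the LHS as the pullback of the Grassmannian KL-Schubert class.

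Next, I identify $(Y_{P_1/Q_1}\ldots Y_{P_r/Q_r}Y_J)\odot \pt^t_e$ with the pullback to $\fh_T(G/B)$ of the hyperbolic cohomology class of the Zelevinsky resolution $\widetilde{X}(w_\lambda)$. This interpretation is already spelled out in the discussion around~\eqref{push-pull}: the composition of relative push-pull operators computes the pushforward class of the iterated fiber bundle $\widetilde{X}(w_\lambda)$, via the identification of each $Y_{J/J'}$ with the canonical morphism $\fh_T(G/P_{J'}) \to \fh_T(G/P_J)$ from \cite[Lemma 8.13]{CZZ3}. The passage between the $\odot$ and $\bullet$ actions is handled by the standard relation $z \bullet \pt_e = \iota(z) \odot \pt_e$ from Section~\ref{sec:DEM}.

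The theorem then follows by the injectivity of $\pi_J^*: \fh_T(G/P_J) \hookrightarrow \fh_T(G/B)$: matching the two pullbacks in $\fh_T(G/B)$ and descending gives the equality in $\fh_T(G/P_J)$. The main work has already been done in Corollary~\ref{kl-via-gamma-new}, whose proof relies on Theorem~\ref{res-kl} of~\cite{KL} together with the delicate combinatorial bookkeeping of the pairs $(J_i,J_i')$ and $(K_i,K_i')$; once that identity is available, Theorem~\ref{kl-zel} is a direct consequence of unwinding the definitions and appealing to the dictionary between $\bfD_t$ and hyperbolic cohomology established in Section~\ref{sec:DEM}.
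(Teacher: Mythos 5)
Your proposal is correct and follows the same route as the paper: the entire content is in Corollary~\ref{kl-via-gamma-new}, and Theorem~\ref{kl-zel} is obtained by applying $\odot\,\pt^t_e$ to both sides of~\eqref{kl-fact} and comparing with Definition~\ref{def:KL} and the discussion around~\eqref{push-pull}. The only superfluous step is your detour through $\fh_T(G/B)$ and the injectivity of $\pi_J^*$: Definition~\ref{def:KL} already places $\KL^J_{w_\lambda}=\mu^{-\ell(w_\lambda w_J)}\psi(\gamma_{w_\lambda w_J})\odot\pt^t_e$ directly in $(\bfD_t^*)^{W_J}\cong\fh_T(G/P_J)$, so the comparison can be made there without pulling back; likewise, your appeal to the relation $z\bullet\pt_e=\iota(z)\odot\pt_e$ is not actually used in the argument.
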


\begin{proof}
{The result is now immediate by comparing the left- and right-hand sides of~\eqref{kl-fact} with Definition~\ref{def:KL} and \eqref{push-pull}, respectively.}
\end{proof}

\begin{rema}{\rm Theorem~\ref{kl-zel} implies that all the Zelevinsky resolutions of a Schubert variety in the Grassmannian have the same class in hyperbolic cohomology (i.e., the corresponding KL-Schubert class). This agrees with a result of Totaro's~\cite{Tot}, which says that the algebraic theories in a larger class (defined by Krichever~\cite{BB}), which includes hyperbolic cohomology, are invariant under small resolutions.}
\end{rema}

\bibliographystyle{alpha}

\end{document}